\def\titlerunning#1{\gdef\titrun{#1}}
\def\author#1{\gdef\autrun{\def\and{\unskip, }#1}\gdef\@author{#1}}
\def\address#1{{\def\and{\\\hspace*{18pt}}\renewcommand{\thefootnote}{}%
		\footnote {#1}}%
	\markboth{\autrun}{\titrun}}
\def\email#1{e-mail: #1}
\def\subjclass#1{{\renewcommand{\thefootnote}{}%
		\footnote{\emph{Mathematics Subject Classification (2020):} #1}}}
\def\keywords#1{\par\medskip
	\noindent\textbf{Keywords.} #1}
\newtheorem{thm}{Theorem}
\newtheorem{prop}{Proposition}
\newtheorem{lem}{Lemma}
\newtheorem{de}{Definition}
\newtheorem{cor}{Corollary}
\newtheorem{re}{Remark}
\newtheorem{ex}{Example}
\numberwithin{equation}{section}
\newcommand{\R}{\mathbb{R}}
\newcommand{\va}{\varphi}
\newcommand{\eps}{\varepsilon}
\newcommand{\x}{\mathfrak{c}_l}
\newcommand{\y}{\mathfrak{c}_r}
\numberwithin{equation}{section}
\numberwithin{footnote}{section}
\begin{document}
	
	
	\baselineskip=16pt
	
	
	\titlerunning{Ergodic problems for contact Hamilton-Jacobi equations}
	
	\title{Ergodic problems for contact Hamilton-Jacobi equations}
	
	\author{Kaizhi Wang  \and Jun Yan}
	
	\date{\today}
	
	\maketitle
	
	\address{Kaizhi Wang: School of Mathematical Sciences, CMA-Shanghai, Shanghai Jiao Tong University, Shanghai 200240, China; \email{kzwang@sjtu.edu.cn}
		\and Jun Yan: School of Mathematical Sciences, Fudan University, Shanghai 200433, China;
		\email{yanjun@fudan.edu.cn}}

	\subjclass{35D40; 35F21; 37J51}

	\begin{abstract}
		This paper deals with the generalized ergodic problem
		\[
		H(x,u(x),Du(x))=c,	\quad x\in M,
		\]	
		where the unknown is a pair $(c,u)$ of a constant $c \in \mathbb{R}$ and a function $u$ on $M$ for which $u$ is a  viscosity solution. We assume $H=H(x,u,p)$ satisfies Tonelli conditions in the argument $p\in T^*_xM$ and the Lipschitz condition in the argument $u\in\R$.

		For a given $c\in \R$, we first discuss necessary and sufficient conditions for the existence of viscosity solutions. Let $\mathfrak{C}$ denote the set of all real numbers $c$'s for which the above equation admits viscosity solutions. Then we show $\mathfrak{C}$ is an interval, whose endpoints $\x$, $\y$ with $\x\leqslant\y$ can be  characterized by a min-max formula and a max-min formula, respectively.

		The most significant finding  is that we figure out the structure of $\mathfrak{C}$ without monotonicity assumptions on $u$.

		\medskip
		\keywords{Hamilton-Jacobi equations,  generalized ergodic problem, contact Hamiltonian systems}
	\end{abstract}

	\newpage
	\tableofcontents

	\section{Introduction}
	\renewcommand{\thethm}{\Alph{thm}}
	\setcounter{equation}{0}
	
	\subsection{Assumptions and main results}

	Let $M$ be a closed (compact, without boundary), connected and smooth manifold.
	Denote by $TM$ its tangent bundle and $T^*M$ the cotangent one. $\R$ stands for 1-dimensional real Euclidean space and $\R_+=\{x\in\R: x>0\}$.
	Let $H=H(x,u,p)$ be a $C^3$ function on $T^*M\times \mathbb{R}$ with $(x,p)\in T^*M$ and $u\in\R$, satisfying
	\begin{itemize}
		\item [\textbf{(H1)}] Strict convexity:  the second partial derivative $\frac{\partial^2 H}{\partial p^2} (x,u,p)$ is positive definite as a quadratic form for all $(x,u,p)\in T^*M\times\mathbb{R}$;
		\item [\textbf{(H2)}] Superlinearity:  $H(x,u,p)$ is  superlinear in $p$ for all $(x,u)\in M\times\mathbb{R}$;
		\item [\textbf{(H3)}] Lipschitz continuity: there exists $\lambda>0$ such that $|\frac{\partial H}{\partial u}(x,u,p)|\leqslant \lambda$ for all $(x,u,p)\in T^*M\times\mathbb{R}$.
	\end{itemize}
	\medskip
	
	Consider the contact Hamilton-Jacobi equation
	\begin{align}\tag{$E_c$}\label{shjc}
		H(x,u(x),Du(x))=c,	\quad x\in M.
	\end{align}
	The symbol $D$ in equation \eqref{shjc}  denotes the spatial gradient.

	Let
\begin{align*}
	\mathfrak{C}:=\Big\{c\in\mathbb{R}\ :\ \text{equation}\  \eqref{shjc}\ \text{has viscosity solutions}\Big\}.
\end{align*}
	
	We get two main results in this paper:
	\begin{itemize}
		\item Theorem \ref{thA} provides a set of necessary and sufficient conditions for the existence of viscosity solutions of \eqref{shjc} for any given $c\in\R$.
		\item Theorem \ref{thC} shows that $\mathfrak{C}$ is an interval with the left endpoint $\x$ and the right endpoint $\y$,
	where $\x\in[-\infty,+\infty)$ and $\y\in(-\infty,+\infty]$. Moreover, $\mathfrak{C}$ may be an open interval, a closed  interval or a half-open interval. Furthermore, we give a min-max formula for $\x$ and a max-min formula for $\y$.
	\end{itemize}

	Before stating the main results, we recall the key tools used in this paper---solution semigroups first. The contact Lagrangian $L(x,u,\dot{x})$ associated with $H(x,u,p)$ is defined by
	\[
	L(x,u, \dot{x}):=\sup_{p\in T^*_xM}\{\langle \dot{x},p\rangle_x-H(x,u,p)\}, \quad (x,\dot{x})\in TM,\ u\in\mathbb{R}.
	\]
Under assumptions (H1)-(H3) the authors of \cite{WWY2} introduced two semigroups of operators associated with the contact Lagrangian $L$, denoted by $\{T^-_t\}_{t\geqslant 0}$ and $\{T^+_t\}_{t\geqslant 0}$.
	For each $\varphi\in C(M,\R)$, denote by $(x,t)\mapsto T^-_t\varphi(x)$ the unique continuous function on $ (x,t)\in M\times[0,+\infty)$ such that
	\[
	T^-_t\varphi(x)=\inf_{\gamma}\left\{\varphi(\gamma(0))+\int_0^tL\left(\gamma(\tau),T^-_\tau\varphi(\gamma(\tau)),\dot{\gamma}(\tau)\right)d\tau\right\},
	\]
	where the infimum is taken among  curves $\gamma\in C^{ac}([0,t],M)$ with $\gamma(t)=x$. We call $\{T^-_t\}_{t\geqslant 0}$ the backward solution semigroup for
	equation
	\begin{align}\label{Cau}
		w_t(x,t)+H(x,w(x,t),Dw(x,t))=0.
	\end{align}
	The function $(x,t)\mapsto T^-_t\varphi(x)$ is the unique viscosity solution of equation \eqref{Cau} with the initial condition $w(x,0)=\va(x)$. Similarly, one can define another semigroup of operators $\{T^+_t\}_{t\geqslant 0}$, called the forward solution semigroup
	by
	\begin{equation*}\label{2-4}
		T^+_t\varphi(x)=\sup_{\gamma}\left\{\varphi(\gamma(t))-\int_0^tL(\gamma(\tau),T^+_{t-\tau}\varphi(\gamma(\tau)),\dot{\gamma}(\tau))d\tau\right\},
	\end{equation*}
	where the supremum is taken among curves $\gamma\in C^{ac}([0,t],M)$ with $\gamma(0)=x$.  We use $\{T^{-,c}_t\}_{t\geqslant 0}$ (resp. $\{T^{+,c}_t\}_{t\geqslant 0}$) to denote the backward (resp. forward) solution semigroup associated with $L+c$, where $c\in\R$.
			
			\medskip
			
			\noindent $\bullet$ {\bf Existence of viscosity solutions of \eqref{shjc}}.
			
			\medskip
			Let us recall the additive eigenvalue problem (or ergodic problem):
			let $G$ be a Hamiltonian defined on $T^*M$.
			Finding solutions $(c,u)$ of equation $G(x,Du(x))=c$ is a well-known problem, called the cell (or, corrector) problem. Under a set of standard assumptions, the real number $c$ is unique, for which the equation has viscosity solutions. Let $\mathfrak{G}_t$ be the solution operator of the corresponding evolutionary equation $w_t(x,t)+G(x,Dw(x,t))=0$. Then $u$ is a viscosity solution of $G(x,Du(x))=c$ if and only if $\mathfrak{G}_tu=u-ct$ for all $t\geqslant 0$. Since the relation $\mathfrak{G}_tu=u-ct$
			looks like a nonlinear eigenvalue problem, finding solutions $(c,u)$ of $G(x,Du(x))=c$ is also called an additive eigenvalue problem. The additive eigenvalue $c$ determines the effective Hamiltonian in the homogenization of Hamitlon-Jacobi equations \cite{E,LPV}. An interesting dynamical feature of $c$ was discovered by weak KAM theory for Tonelli Lagrangians \cite{Fat97a,Fat97b,Fat98,Fat-b}, where $c$ is called Ma\~n\'e critical value of $G$, and a link between viscosity solutions of $G(x,Du(x))=c$ and Aubry sets, Mather sets of Hamiltonian systems generated by $G$ was established. Under Tonelli conditions, Contreras et al. \cite{Ci} provided a representation formula for $c$\ :
			\[
			c=\inf_{f\in C^\infty(M)}\sup_{x\in M}G(x,Df(x)).
			\]
			The above infimum is not a minimum.
			This formula still holds true when $C^\infty(M)$ is replaced by $C^{1,1}(M)$, $C^{1}(M)$, or $\mathrm{Lip}(M)$ and $\inf$ is replaced by $\min$, see \cite{Be,FS}.

			Now come back to our problem \eqref{shjc}, which we call it {\em generalized additive eigenvalue problem (or generalized ergodic problem)}. When $H$ satisfies (H1), (H2) and  $0<\delta\leqslant \frac{\partial H}{\partial u}\leqslant \lambda$, it is well-known that \eqref{shjc} has viscosity solutions for each real number $c$. When $H$ satisfies (H1), (H2) and  $0\leqslant \frac{\partial H}{\partial u}\leqslant \lambda$, \eqref{shjc} has viscosity solutions if and only if there is $a\in\R$ such that Ma\~n\'e critical value of $H(x,a,p)$ is $c$. See \cite{WWY3} for an example where the range of the function $a\mapsto \text{Ma\~n\'e critical value of}\ H(x,a,p)$ is a proper subset of $\R$, which means that there exists $c\in\R$ such that \eqref{shjc} has no viscosity solutions.
			Seen in this light, studying the generalized additive eigenvalue problem \eqref{shjc} under (H1)-(H3) is not a straightforward task at all.
			\begin{thm}\label{thA}
				Let $c\in\R$. The following  statements are equivalent.
				\begin{itemize}
					\item [(1)] Equation \eqref{shjc} has viscosity solutions;
					\item [(2)] There exist $\varphi$, $\psi\in C(M,\R)$ and $t_1$, $t_2\in \R_+$ such that $T^{+,c}_{t_1}\varphi\leqslant \varphi$, $T^{+,c}_{t_2}\psi\geqslant \psi$;
					\item [(3)] There exist $\varphi$, $\psi\in C(M,\R)$ and $t_1$, $t_2\in \R_+$ such that $T^{-,c}_{t_1}\varphi\geqslant \varphi$, $T^{-,c}_{t_2}\psi\leqslant \psi$;
					\item [(4)] There exist $\varphi$, $\psi\in C(M,\R)$ such that $T^{-,c}_{t}\varphi$ is bounded from below and $T^{-,c}_{t}\psi$ is bounded from above on $M\times[0,+\infty)$.
				\end{itemize}
			\end{thm}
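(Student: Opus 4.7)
My plan is to establish the four-way equivalence via the cycle $(1)\Rightarrow(2),(3),(4)$ (immediate) together with $(4)\Rightarrow(3)\Rightarrow(2)\Rightarrow(1)$. For the easy direction, any viscosity solution $u$ of \eqref{shjc} is a common fixed point of both semigroups $\{T^{-,c}_t\}_{t\geqslant 0}$ and $\{T^{+,c}_t\}_{t\geqslant 0}$ (a standard feature of stationary viscosity solutions on a compact Tonelli manifold under (H1)--(H3)), so the choice $\varphi=\psi=u$ simultaneously witnesses (2), (3), and (4).

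The dynamical engine for the remainder of the cycle, which I expect to be available from \cite{WWY2} under (H1)--(H3), is a propagation principle: a single-time inequality $T^{-,c}_{t_0}\varphi\geqslant\varphi$ (resp.\ $\leqslant$) forces $t\mapsto T^{-,c}_t\varphi$ to be non-decreasing (resp.\ non-increasing) on $[0,+\infty)$, with symmetric statements for $\{T^{+,c}_t\}_{t\geqslant 0}$. With this in hand the step $(3)\Rightarrow(2)$ is short: from $T^{-,c}_{t_1}\varphi\geqslant\varphi$ the orbit is non-decreasing for all $t\geqslant 0$, so $\varphi$ is a viscosity subsolution of \eqref{shjc}, and by the standard dual characterization of subsolutions this is equivalent to $T^{+,c}_t\varphi\leqslant\varphi$ for every $t\geqslant 0$, in particular for $t=t_1$; the supersolution leg for $\psi$ is analogous.

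The deeper steps are $(4)\Rightarrow(3)$ and $(2)\Rightarrow(1)$. For $(4)\Rightarrow(3)$, given $\psi$ with $T^{-,c}_t\psi$ bounded above on $M\times[0,+\infty)$, I would extract from the orbit, which is also equi-Lipschitz for $t\geqslant 1$ by the standard Tonelli a priori estimates available in this setting, an accumulation function $\tilde\psi$ via a suitable $\limsup$ or monotone envelope along $t\to\infty$, and show from the semigroup property and order-preservation that $\tilde\psi$ satisfies $T^{-,c}_s\tilde\psi\leqslant\tilde\psi$ for every $s>0$; the sub-solution leg constructed from $\varphi$ bounded below is symmetric. For $(2)\Rightarrow(1)$, after upgrading $\varphi$ and $\psi$ to a viscosity sub- and supersolution of \eqref{shjc} via the propagation principle, I would invoke a Perron-type construction adapted to the contact setting, defining the candidate viscosity solution as the pointwise supremum of subsolutions dominated by a chosen supersolution.

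The main obstacle I anticipate is that without a monotonicity hypothesis on $H$ in $u$ there is no classical comparison principle, and the contact semigroups do not commute with the addition of constants, so a sub- and a supersolution cannot be freely translated to make them comparable. Both the envelope construction in $(4)\Rightarrow(3)$ (which must pass $\limsup$ through $T^{-,c}_s$ cleanly) and the Perron step in $(2)\Rightarrow(1)$ (which requires an ordered sub/super pair) will demand careful use of the Lipschitz estimate $|\partial_u H|\leqslant\lambda$ from (H3), typically through an exponential-in-time control of the form $\|T^{-,c}_t f-T^{-,c}_t g\|_\infty\leqslant e^{\lambda t}\|f-g\|_\infty$, to keep the relevant auxiliary functions comparable across the time evolution.
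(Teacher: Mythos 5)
Your proposal has several genuine gaps, some of which you correctly anticipate but do not resolve.

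\smallskip

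\textbf{The easy direction is already wrong for item (2).} A viscosity solution $u$ of \eqref{shjc} is a fixed point of $\{T^{-,c}_t\}$ but \emph{not} generally of $\{T^{+,c}_t\}$; from $T^{-,c}_t u = u$ and Proposition~\ref{pr3.3} one only gets $T^{+,c}_t u \leqslant u$, not equality. Thus $\varphi=\psi=u$ indeed witnesses (3) and (4), but for (2) the inequality $T^{+,c}_{t_2}\psi\geqslant\psi$ fails in general for $\psi=u$. The paper supplies the missing witness via Proposition~\ref{pr1}: it takes $\psi:=\lim_{t\to+\infty}T^{+,c}_tu$, a forward weak KAM solution, which is then a genuine fixed point of $\{T^{+,c}_t\}$.

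\smallskip

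\textbf{The propagation principle you invoke is false as stated.} A single-time inequality $T^{-,c}_{t_0}\varphi\geqslant\varphi$ gives, by monotonicity, only $T^{-,c}_{(n+1)t_0}\varphi\geqslant T^{-,c}_{nt_0}\varphi$, i.e.\ monotonicity along the discrete lattice $n t_0$; it does \emph{not} force $t\mapsto T^{-,c}_t\varphi$ to be non-decreasing on $[0,+\infty)$. The paper makes this precise in Proposition~\ref{thD}(1): the bounded-orbit alternative there produces a limit that is only a $t_0$-periodic viscosity solution of the \emph{evolutionary} equation \eqref{96-1}, not necessarily a stationary one. Consequently your deduction that $\varphi$ is a viscosity subsolution of \eqref{shjc} from a single-time inequality does not follow; the paper needs the separate strict-inequality case (Proposition~\ref{thD}(2)) together with the perturbation to an irrational ratio $t_1/t_0$ to pass from a periodic limit to a stationary one.

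\smallskip

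\textbf{The crux, (2) $\Rightarrow$ (1), is not resolved.} You correctly diagnose that without monotonicity of $H$ in $u$ there is no comparison principle, so the Perron construction and the usual trick of shifting a subsolution below a supersolution are unavailable; but you leave this as an ``anticipated obstacle.'' This is precisely where the paper's argument is genuinely different and non-standard. The paper avoids Perron entirely and instead runs a purely dynamical argument: (i) the dichotomy of Lemma~\ref{lem5-1}/Corollary~\ref{co5-1} on the orbit $T^{-,c}_t\varphi$; (ii) if that orbit blows up, a clock-alignment step (choosing a common period $t_0=k_2(n_1 t_1+s_0)=k_1 t_2$) that lets the two hypotheses interact and produces a bounded orbit $T^{+,c}_{nt_0}\psi$; and, in the proof of Proposition~\ref{thH}, (iii) an interpolation along $u_\rho=\rho\varphi+(1-\rho)\psi$ with a critical parameter $\rho_0$ to manufacture a bounded orbit when neither $\varphi$ nor $\psi$ gives one directly. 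These three devices are the actual substance of the theorem; a Perron-type envelope has no role in the paper's proof and cannot be made to work here without a comparison principle.

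\smallskip

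In short: the simple fixed-point witness fails for (2); the propagation principle you rely on is not available in this setting; and the replacement for Perron (the $\rho_0$-interpolation and the commensurability/clock-alignment argument) is the missing idea.
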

			
			\medskip
			

			\noindent $\bullet$ {\bf Structure of the set  $\mathfrak{C}$}.
			
			\medskip

			We call $\mathfrak{C}$ {\em the admissible set for the generalized ergodic problem} \eqref{shjc}.
			Under the same assumptions imposed in this paper, the existence of solutions $(c,u)$ of \eqref{shjc} was proven in \cite{WWY2}, i.e, $\mathfrak{C}\neq \emptyset$. But, the structure of the set $\mathfrak{C}$ was not discussed there.

			$\mathrm{SCL^-}(M)$ (resp. $\mathrm{SCL^+}(M)$) stands for the set of all functions which are semiconcave (resp. semiconvex) on $M$ with a linear modulus. $\mathrm{Lip}(M)$ stands for the space of Lipschitz continuous functions on $M$. Since $\mathrm{SCL^{\pm}}(M)\subset\mathrm{Lip}(M)$, then by Rademacher's theorem $Du(x)$ exists almost everywhere for each $u\in \mathrm{SCL^{\pm}}(M)$. See for example, \cite{CS} for more about semiconcave and semiconvex  functions. Let $\mathrm{Dom}(Du)$ denote the domain of definition of $Du$. We attempt to characterize the set $\mathfrak{C}$ using the following two constants (may be $\pm \infty$) determined by $H$.  Define
			\begin{align*}\label{ccc}
				\begin{split}
					\x: & =\inf_{u\in\mathrm{SCL^+}(M)}\sup_{x\in\mathrm{Dom}(Du)}H(x,u(x),Du(x)),\\[3mm]
					\y: & =\sup_{u\in \mathrm{SCL^+}(M)}\inf_{x\in \mathrm{Dom}(Du)}H(x,u(x),Du(x)).
				\end{split}
			\end{align*}
			
			\medskip
			The following result gives a complete answer to the structure problem for $\mathfrak{C}$.
			\begin{thm}\label{thC}
				\[
				(\x,\y)\subset\mathfrak{C}\subset[\x,\y].
				\]
			\end{thm}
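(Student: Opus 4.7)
The plan is to prove $\mathfrak{C}\subset[\x,\y]$ and $(\x,\y)\subset\mathfrak{C}$ separately, in both cases passing through the semigroup characterization in Theorem \ref{thA}(2). A standing ingredient under (H1)-(H3) is that $T^{+,c}_t$ sends $C(M,\R)$ into $\mathrm{SCL}^+(M)$ for every $t>0$; this is the mechanism producing the semiconvex test functions required by the definitions of $\x$ and $\y$.

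For $\mathfrak{C}\subset[\x,\y]$, take $c\in\mathfrak{C}$ and invoke Theorem \ref{thA}(2) to obtain $\varphi,\psi\in C(M,\R)$ and $t_1,t_2>0$ with $T^{+,c}_{t_1}\varphi\leq\varphi$, $T^{+,c}_{t_2}\psi\geq\psi$. Set $\tilde\varphi:=T^{+,c}_{t_1}\varphi$ and $\tilde\psi:=T^{+,c}_{t_2}\psi$: both lie in $\mathrm{SCL}^+(M)$, and monotonicity of $T^{+,c}_t$ together with the semigroup property propagate the inequalities to $T^{+,c}_{t_1}\tilde\varphi\leq\tilde\varphi$, $T^{+,c}_{t_2}\tilde\psi\geq\tilde\psi$. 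I then appeal to the Fenchel/Lax--Oleinik content already present in \cite{WWY2} to conclude that these semigroup inequalities force the pointwise bounds $H(x,\tilde\varphi,D\tilde\varphi)\leq c$ on $\mathrm{Dom}(D\tilde\varphi)$ and $H(x,\tilde\psi,D\tilde\psi)\geq c$ on $\mathrm{Dom}(D\tilde\psi)$. Feeding $\tilde\varphi$ into the definition of $\x$ yields $\x\leq c$; feeding $\tilde\psi$ into the definition of $\y$ yields $c\leq\y$.

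For $(\x,\y)\subset\mathfrak{C}$, fix $c\in(\x,\y)$ and choose $\eps>0$ along with $v_1,v_2\in\mathrm{SCL}^+(M)$ satisfying
\[
H(x,v_1(x),Dv_1(x))\leq c-\eps,\qquad H(x,v_2(x),Dv_2(x))\geq c+\eps
\]
at every differentiability point. I verify hypothesis (2) of Theorem \ref{thA} with $\varphi:=v_1$ and $\psi:=v_2$. For $v_1$: semiconvexity makes the superdifferential $D^+v_1(x)$ either empty or the singleton $\{Dv_1(x)\}$, so the a.e.\ bound $H\leq c-\eps$ is automatically the viscosity subsolution condition, and the weak KAM correspondence from \cite{WWY2} then yields $T^{+,c}_tv_1\leq v_1$ for every $t>0$. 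For $v_2$: semiconvexity alone does not produce a viscosity supersolution (convexity of $H$ in $p$ works the wrong way on the convex subdifferential $D^-v_2$), so I argue directly via the variational formula for $T^{+,c}_t$. At a differentiability point $x$ of $v_2$, pick a curve $\gamma$ with $\gamma(0)=x$ and initial velocity $\dot\gamma(0)=\frac{\partial H}{\partial p}(x,v_2(x),Dv_2(x))$; the Fenchel equality case at $\tau=0$ together with $H\geq c+\eps$ gives, for small $t>0$,
\[
v_2(\gamma(t))-\int_0^t\bigl(L(\gamma,T^{+,c}_{t-\tau}v_2(\gamma),\dot\gamma)+c\bigr)d\tau\geq v_2(x)+\eps t-o(t),
\]
whence $T^{+,c}_tv_2(x)>v_2(x)$. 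Compactness of $M$, Lipschitz regularity of $v_2$, and (H3) make the $o(t)$ uniform on $M$, so continuity of $T^{+,c}_tv_2$ and density of $\mathrm{Dom}(Dv_2)$ upgrade this to $T^{+,c}_{t_2}v_2\geq v_2$ on all of $M$ for some $t_2>0$. Theorem \ref{thA}(2) then delivers $c\in\mathfrak{C}$.

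The main technical obstacle, in both halves of the argument, is the conversion between pointwise a.e.\ bounds on $H$ and global inequalities under $T^{+,c}_t$: the contact term $L(\gamma,T^{+,c}_{t-\tau}v(\gamma),\dot\gamma)-L(\gamma,v(\gamma),\dot\gamma)$ must be estimated uniformly over near-optimal curves, and this is precisely where (H3) together with the velocity bounds produced by (H1)-(H2) do the real work.
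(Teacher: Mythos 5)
Your second half ($(\x,\y)\subset\mathfrak{C}$) is a genuinely different and viable route from the paper's, but your first half ($\mathfrak{C}\subset[\x,\y]$) has a real gap.

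\textbf{On $\mathfrak{C}\subset[\x,\y]$.} The step that fails is the passage from a \emph{single-time} semigroup inequality to a pointwise Hamiltonian bound. From $T^{+,c}_{t_1}\tilde\varphi\leqslant\tilde\varphi$ one can iterate to get $T^{+,c}_{nt_1}\tilde\varphi\leqslant\tilde\varphi$ for all $n$, but nothing controls $T^{+,c}_{t}\tilde\varphi$ for $t\in(0,t_1)$: the evolution may rise above $\tilde\varphi$ and return by $t=t_1$. The characterization of viscosity subsolutions that you invoke is $T^{-,c}_t w\geqslant w$ \emph{for all} $t\geqslant0$, equivalently $T^{+,c}_t w\leqslant w$ for all $t$, and Theorem~\ref{thA}(2) only supplies this at one time; Proposition~\ref{thD}(1) shows that the one-time hypothesis only yields asymptotic dichotomies, not pointwise bounds. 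There is no ``Fenchel/Lax--Oleinik'' result in \cite{WWY2}, nor in this paper, that converts a one-time inequality into $H(x,\tilde\varphi,D\tilde\varphi)\leqslant c$ on $\mathrm{Dom}(D\tilde\varphi)$, and indeed this implication is false in general. The paper avoids Theorem~\ref{thA} entirely for this inclusion: for $c<\x$ it takes a Lipschitz viscosity subsolution $u$ of \eqref{shjc}, uses $H(x,u,Du)\leqslant c$ a.e., and invokes the Czarnecki--Rifford identity \eqref{991} to pass from the infimum over $\mathrm{Lip}(M)$ to the infimum over $\mathrm{SCL}^+(M)$; for $c>\y$ it produces, via Proposition~\ref{pr1}, a forward weak KAM solution $v^*$, which \emph{is} semiconvex and satisfies $H(x,v^*,Dv^*)=c$ at every differentiability point, and feeds that into the $\y$-formula. (A clean alternative, closer to what you seem to want, is to use $v^*$ for \emph{both} endpoints, since $H=c$ on $\mathrm{Dom}(Dv^*)$ gives both $\x\leqslant c$ and $c\leqslant\y$ directly.) Your choice to route everything through Theorem~\ref{thA}(2) is where the argument breaks.

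\textbf{On $(\x,\y)\subset\mathfrak{C}$.} Here you diverge from the paper in an interesting way. The subsolution side ($v_1$) is fine: semiconvexity makes $D^+v_1(x)$ a singleton at differentiability points and empty elsewhere, so the a.e.\ bound is the viscosity subsolution property, and the standard correspondence gives $T^{+,c}_tv_1\leqslant v_1$ for all $t$. For the supersolution side ($v_2$) you correctly observe that semiconvexity alone is no help, and you attempt a direct, quantitative lower bound $T^{+,c}_tv_2\geqslant v_2+\eps t-O(t^2)$ valid uniformly over $\mathrm{Dom}(Dv_2)$, then extend by density and continuity. The paper argues very differently: it proceeds by contradiction, locates a touching point $x_0$ between the semiconvex $\psi$ and the semiconcave $T^{-,c}_{t_1}\psi$, applies Lemma~\ref{jc} to obtain simultaneous differentiability and equality of gradients there, and then derives a contradiction by following the contact flow \eqref{c} through $(x_0,\psi(x_0),D\psi(x_0))$ for short time. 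Your route avoids the touching lemma entirely, which is nice, but you are implicitly relying on three uniform estimates you do not prove: (i) the one-sided semiconvexity bound $v_2(y)\geqslant v_2(x)+\langle Dv_2(x),y-x\rangle-K\,d(x,y)^2$ uniformly in $x\in\mathrm{Dom}(Dv_2)$; (ii) the uniform bound on $\dot\gamma(0)=\tfrac{\partial H}{\partial p}(x,v_2(x),Dv_2(x))$ from the Lipschitz constant of $v_2$; and (iii) a short-time modulus $\|T^{+,c}_sv_2-v_2\|_\infty=O(s)$ to control the contact term $L(\cdot,T^{+,c}_{t-\tau}v_2,\cdot)-L(\cdot,v_2,\cdot)$ via (L3). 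Points (i) and (ii) are immediate; (iii) is true for Lipschitz initial data but needs an argument (it does not follow from Proposition~\ref{pr-sg}(4), which is qualitative). If you supply (iii), your version of this half is complete and arguably more direct than the paper's contradiction argument; as written, it is a plausible sketch rather than a proof.
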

			\medskip
			
			\begin{re} Let us take a closer look at $\mathfrak{C}$.
				\begin{itemize}
					\item [$\star$] The interval $\mathfrak{C}$ will be one of the following: $(\mathfrak{c}_l,\mathfrak{c}_r)$, $[\mathfrak{c}_l,\mathfrak{c}_r]$,
					$(\mathfrak{c}_l,\mathfrak{c}_r]$,
					$[\mathfrak{c}_l,\mathfrak{c}_r)$.
					More precisely, each case can happen. Let $H(x,u,p):=\|p\|_x^2+f(u)$, where $f(u)$ is a smooth function on $\R$ with $|f'(u)|\leqslant \lambda$. If $\mathrm{Ran}(f)=(a,b)$, then $\mathfrak{C}=(a,b)$. If $\mathrm{Ran}(f)=[a,b]$, then $\mathfrak{C}=[a,b]$. If $\mathrm{Ran}(f)=(a,b]$, then $\mathfrak{C}=(a,b]$.
					If $\mathrm{Ran}(f)=[a,b)$, then $\mathfrak{C}=[a,b)$. Here, $a$, $b\in\R$ with $a\leqslant b$, and $\mathrm{Ran}(f)$ denotes the range of $f$.		
					\item [$\star$]
					We will prove in Section 4 that there is no viscosity solutions of  \eqref{shjc}  either when $c<\x$, or when $c>\y$.
					So, in view of the non-emptiness of $\mathfrak{C}$, one can deduce that $\x\leqslant\y$.
					\item [$\star$] If $H$ does not depend on $u$, then by classical results we deduce that
					\[
					\mathfrak{C}=\{\text{Ma\~n\'e critical value of}\ H\}
					\]
					is a singleton and thus $\x=\y$.
					\item [$\star$] We will show what $\x$, $\y$ are in several examples in Section 4.
				\end{itemize}
			\end{re}

			\medskip

			\subsection{Historical remarks}
			Hamilton-Jacobi equations have been first introduced in classical mechanics, but find applications in many other fields of mathematics. The theory of viscosity solutions of Hamilton-Jacobi equations was introduced in the early 80's by Crandall and Lions \cite{CL1}, Crandall, Evans and Lions \cite{CL2}. It provides a suitable PDE framework for studying Hamilton-Jacobi equations which does not have classical solutions. For a good introductory book on viscosity solutions, we refer readers to \cite{Bar}. The theory of viscosity solutions has been extensively studied and refined by many authors, and, among the numerous contributions in the literature, we would like to point out that the weak KAM theory opened a way to study viscosity solutions of Hamilton-Jacobi equations with Tonelli Hamiltonians using the dynamical information of action minimizing orbits of Hamiltonian systems. We refer readers to \cite{Ar,Be2,Con,DS,FS,FR,FG,Gom,ishii,Ka,MV,S,Tr,WY} and the references therein for more details on this topic. Along this line, it is natural to consider whether one can use weak KAM type results for contact Hamiltonian systems to study viscosity solutions of contact Hamilton-Jacobi equations \eqref{shjc}.
			For weak KAM aspects for contact Hamiltonian systems, we refer readers to \cite{DFIZ,MS,Ms,WWY3,WWY4}.
			Under assumptions imposed in this paper, it was shown in \cite{WWY2} the existence of solutions $(c,u)$ of equation \eqref{shjc}. See \cite{j} for a similar result using traditional PDE methods.
			We aim to refine and deepen the results in \cite{WWY2} in the present paper. We still use dynamical tools from the weak KAM theory for contact Hamiltonian systems satisfying (H1)-(H3). These assumptions will be weakened in a forthcoming paper.

			Our method is dynamical in nature and inspired by the deep connection between contact Hamilton-Jacobi equations \eqref{shjc} and contact Hamiltonian system
			\begin{align}\label{c}
				\left\{
				\begin{array}{l}
					\dot{x}=\frac{\partial H}{\partial p}(x,u,p),\\[2mm]
					\dot{p}=-\frac{\partial H}{\partial x}(x,u,p)-\frac{\partial H}{\partial u}(x,u,p)p,\\[2mm]
					\dot{u}=\frac{\partial H}{\partial p}(x,u,p)\cdot p-H(x,u,p).
				\end{array}
				\right.
			\end{align}
			The authors of \cite{WWY1,WWY2,WWY3,WWY4} discussed the weak KAM \cite{Fat-b} and Aubry-Mather \cite{M,S1} aspects of contact Hamiltonian systems from variational principles, dynamical properties of action minimizing orbits, to weak KAM solutions, viscosity solutions of stationary and evolutionary contact Hamilton-Jacobi equations. Contact Hamiltonian systems have deep connection with contact topology and non-equilibrium
			thermodynamics, see for example, \cite{EP}.

			\subsection{Notations}
			We write as follows a list of symbols used throughout this paper.
			\begin{itemize}
				
				\item We choose, once and for all, a $C^{\infty}$ Riemannian metric on $M$. It is classical that there is a canonical way to associate to it a Riemannian metric on $T M .$ We use the same symbol $d$ to denote the distance function defined by the Riemannian metric on $M$ and the distance function defined by the Riemannian metric on $TM .$ We use the same symbol $\|\cdot\|_x$ to denote the
				norms induced by the Riemannian metrics on $T_xM$ and $T^*_xM$ for $x\in M$, and by $\langle \cdot,\cdot\rangle_x$ the canonical pairing between the tangent space $T_xM$ and the cotangent space $T^*_xM$.
				\item  $C^{k}\left(M,\R\right)(k \in \mathbb{N})$ stands for the function space of $k$ -times continuously differentiable functions on $M$, and $C^{\infty}\left(M,\R\right):=\bigcap_{k=0}^{\infty} C^{k}\left(M,\R\right)$. And $\|\cdot\|_\infty$ denotes the supremum norm on these spaces.
				\item
				$C^{ac}([a,b],M)$ stands for the space of absolutely continuous curves  $[a,b]\to M$.
				\item
				$\mathrm{Lip}(M)$ stands for the space of Lipschitz continuous functions on $M$.
				\item  Denote by $\mathrm{SCL^-}(M)$ (resp. $\mathrm{SCL^+}(M)$) the set of all functions which are semiconcave (resp. semiconvex) on $M$ with a linear modulus.
				\item  $Du(x)=(\frac{\partial u}{\partial x_1},\dots,\frac{\partial u}{\partial x_n})$ and $ Dw(x,t)=(\frac{\partial w}{\partial x_1},\dots,\frac{\partial w}{\partial x_n})$.
				\item $D^+u(x)$ denotes the Fr\'echet superdifferential of $u$
				at $x$.
				\item Let  $u\in \mathrm{Lip}(M)$. Denote by $\mathrm{Dom}(Du)$ the set of all points $x\in M$ where $Du(x)$ exists.
				\item $\mathcal{S}_-$ (resp. $\mathcal{S}_+$) denotes the set of all  backward (resp. forward) weak KAM solutions of equation $H(x,u(x),Du(x))=0$.
				\item Let $\Phi_t$ denote the local flow of contact Hamiltonian system (\ref{c}).
				\item $\mathrm{cl}(A)$ denotes the closure of a set $A\subset T^*M\times\R$.
				\item $\mathrm{co}(A)$ denotes the convex hull of a set $A\subset T^*M\times\R$.
				\item $h_{x_0,u_0}(x,t)$ (resp. $h^{x_0,u_0}(x,t)$) denotes the forward (resp. backward) implicit action function associated with $L$.
				\item $h^c_{x_0,u_0}(x,t)$ (resp. $h_c^{x_0,u_0}(x,t)$) denotes the forward (resp. backward) implicit action function associated with $L+c$, where $c\in\R$.
				\item $\{T^{-}_t\}_{t\geqslant 0}$ (resp. $\{T^{+}_t\}_{t\geqslant 0}$) denotes the backward (resp. forward) solution semigroup associated with $L$.
				\item $\{T^{-,c}_t\}_{t\geqslant 0}$ (resp. $\{T^{+,c}_t\}_{t\geqslant 0}$) denotes the backward (resp. forward) solution semigroup associated with $L+c$, where $c\in\R$.
				
				\item We use $\va$, $\psi$, $\va'$, $\psi'$to denote generic functions in $C(M,\R)$ not necessarily the same in any two proofs, and use $\va_\infty$, $\psi_\infty$, $\va'_\infty$, $\psi'_\infty$ to denote limit functions of $T^{\pm,c}_tf$ as $t\to+\infty$, with $f=\va$, $\psi$, $\va'$, $\psi'$.
			\end{itemize}
			
			The rest of this paper is organized as follows.
			In Section 2, we first recall some known results on the weak KAM theory for contact Hamiltonian systems, and then prove several new results on the solution semigroups which will be used later. In Section 3 we show Proposition \ref{thG} first. Then we prove Proposition \ref{thD}. Using the results obtained in Proposition \ref{thD} we prove Proposition \ref{thH}. Theorem \ref{thA} is a direct consequence of Proposition \ref{thG} and Proposition \ref{thH}. Section 4 is devoted to the proof of Theorem \ref{thC}.
			The proofs of some preliminary results are given in the Appendix.
			

			
			\medskip


			\section{Preliminaries}

			\subsection{Weak KAM theory for contact Hamiltonian systems}
			We recall some definitions and basic results in the weak KAM theory for contact Hamiltonian system (\ref{c}), where an implicit variational principle plays an essential role. Most of the results in this section can be found in  \cite{WWY1,WWY2,WWY3,WWY4}.

				Since $H$ satisfies (H1)-(H3), it is direct to check that $L$ satisfies: (L1) Strict convexity:  the second partial derivative $\frac{\partial^2 L}{\partial \dot{x}^2} (x,u,\dot{x})$ is positive definite as a quadratic form for all $(x,u,\dot{x})\in TM\times\mathbb{R}$; (L2) Superlinearity:  $L(x,u,\dot{x})$ is  superlinear in $\dot{x}$ for all $(x,u)\in M\times\mathbb{R}$; (L3) Lipschitz continuity: there exists $\lambda>0$ such that $|\frac{\partial L}{\partial u}(x,u,\dot{x})|\leq \lambda$ for all $(x,u,\dot{x})\in TM\times\mathbb{R}$.

				\medskip

			\noindent $\bullet$ {\bf Variational principles}.
			First recall implicit variational principles for contact Hamiltonian system (\ref{c}), which connects contact Hamilton-Jacobi equations and contact Hamiltonian systems.
			\begin{prop}\label{IVP}
				For any given $x_0\in M$, $u_0\in\mathbb{R}$, there exist two continuous functions $h_{x_0,u_0}(x,t)$ and $h^{x_0,u_0}(x,t)$ defined on $M\times(0,+\infty)$ satisfying	
				\begin{align}
					h_{x_0,u_0}(x,t)&=u_0+\inf_{\substack{\gamma(0)=x_0 \\  \gamma(t)=x} }\int_0^tL\big(\gamma(\tau),h_{x_0,u_0}(\gamma(\tau),\tau),\dot{\gamma}(\tau)\big)d\tau,\label{2-1}\\
					h^{x_0,u_0}(x,t)&=u_0-\inf_{\substack{\gamma(t)=x_0 \\  \gamma(0)=x } }\int_0^tL\big(\gamma(\tau),h^{x_0,u_0}(\gamma(\tau),t-\tau),\dot{\gamma}(\tau)\big)d\tau,\label{2-2}
				\end{align}
				where the infimums are taken among the Lipschitz continuous curves $\gamma:[0,t]\rightarrow M$.
				Moreover, the infimums in (\ref{2-1}) and \eqref{2-2} can be achieved.
				If $\gamma_1$ and $\gamma_2$ are curves achieving the infimums \eqref{2-1} and \eqref{2-2} respectively, then $\gamma_1$ and $\gamma_2$ are of class $C^1$.
				Let
				\begin{align*}
					x_1(s)&:=\gamma_1(s),\quad u_1(s):=h_{x_0,u_0}(\gamma_1(s),s),\,\,\,\qquad  p_1(s):=\frac{\partial L}{\partial \dot{x}}(\gamma_1(s),u_1(s),\dot{\gamma}_1(s)),\\
					x_2(s)&:=\gamma_2(s),\quad u_2(s):=h^{x_0,u_0}(\gamma_1(s),t-s),\quad   p_2(s):=\frac{\partial L}{\partial \dot{x}}(\gamma_2(s),u_2(s),\dot{\gamma}_2(s)).
				\end{align*}
				Then $(x_1(s),u_1(s),p_1(s))$ and $(x_2(s),u_2(s),p_2(s))$ satisfy equations \eqref{c} with
				\begin{align*}
					x_1(0)=x_0, \quad x_1(t)=x, \quad \lim_{s\rightarrow 0^+}u_1(s)=u_0,\\
					x_2(0)=x, \quad x_2(t)=x_0, \quad \lim_{s\rightarrow t^-}u_2(s)=u_0.
				\end{align*}
			\end{prop}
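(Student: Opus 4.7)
The plan is to bypass the implicit integral equation \eqref{2-1} and construct $h_{x_0,u_0}$ directly from a calculus-of-variations problem in which the $u$-dependence of $L$ is encoded by an auxiliary ODE driven by the curve. Given an absolutely continuous $\g:[0,t]\to M$ with $\g(0)=x_0$, $\g(t)=x$, let $u_\g:[0,t]\to\R$ be the unique solution of
\begin{equation*}
\dot u(s)=L\bigl(\g(s),u(s),\dot\g(s)\bigr),\qquad u(0)=u_0,
\end{equation*}
which exists and is unique by (L3). Define
\begin{equation*}
h_{x_0,u_0}(x,t):=\inf_{\g}u_\g(t)
\end{equation*}
over curves with the prescribed endpoints, and construct $h^{x_0,u_0}$ symmetrically from the time-reversed Cauchy problem with terminal datum $u_0$.

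Next I would establish the Bellman dynamic programming property: if $\g$ realizes the infimum for endpoint $(x,t)$, then $u_\g(s)=h_{x_0,u_0}(\g(s),s)$ for every $s\in[0,t]$. A strict inequality at time $s$ would propagate to time $t$, because if $\tilde{\g}$ on $[0,s]$ satisfied $u_{\tilde\g}(s)<u_\g(s)$, then gluing $\tilde\g$ with $\g|_{[s,t]}$ and applying a Gr\"onwall comparison of the form $v(t)=v(s)\exp\!\bigl(\int_s^t\partial_u L\,d\tau\bigr)$ to $v=u_\g-u_{\text{glued}}$ would contradict minimality. Substituting $u_\g(\tau)=h_{x_0,u_0}(\g(\tau),\tau)$ into $u_\g(t)=u_0+\int_0^tL(\g,u_\g,\dot\g)\,d\tau$ then gives \eqref{2-1}; the derivation of \eqref{2-2} is analogous. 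Existence of minimizers follows by the direct method: superlinearity (L2) combined with a uniform Gr\"onwall bound on $u_\g$ provided by (L3) forces any minimizing sequence to be Tonelli-equicontinuous, hence to admit a $C^0$-convergent subsequence with weakly convergent derivatives.

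For the Hamiltonian system, consider a one-parameter family $\g_\eps=\g+\eps\eta$ with $\eta(0)=\eta(t)=0$, and set $w(s):=\partial_\eps u_{\g_\eps}(s)|_{\eps=0}$. Differentiating the defining ODE in $\eps$ yields the linear equation $\dot w=\partial_xL\cdot\eta+\partial_uL\cdot w+\partial_{\dot x}L\cdot\dot\eta$ with $w(0)=0$, whose Duhamel representation is
\begin{equation*}
w(t)=\int_0^t \exp\!\Bigl(\int_s^t\tfrac{\partial L}{\partial u}\,d\tau\Bigr)\Bigl(\tfrac{\partial L}{\partial x}\,\eta+\tfrac{\partial L}{\partial\dot x}\,\dot\eta\Bigr)(s)\,ds.
\end{equation*}
Imposing $w(t)=0$ for all admissible $\eta$ and integrating the $\dot\eta$-term by parts gives the contact Euler--Lagrange equation
\begin{equation*}
\frac{d}{ds}\frac{\partial L}{\partial\dot x}=\frac{\partial L}{\partial x}+\frac{\partial L}{\partial u}\cdot\frac{\partial L}{\partial\dot x},
\end{equation*}
which under the Legendre transform $p=\partial_{\dot x}L$ is exactly \eqref{c}. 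Smoothness of this flow upgrades $\g$ to $C^1$, and the boundary conditions $\lim_{s\to 0^+}u_1(s)=u_0$ (and analogously for $u_2$) follow from continuity of the ODE solution at $s=0$. Joint continuity of $(x,t)\mapsto h_{x_0,u_0}(x,t)$ is obtained from standard a priori bounds together with equicontinuity of minimizers, as in classical weak KAM theory.

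The main obstacle is the lower semicontinuity step inside the direct method: because $u_{\g_n}$ is coupled to $\g_n$ through a nonautonomous ODE with only Lipschitz $u$-dependence, Tonelli's classical lower semicontinuity theorem does not apply off the shelf. The fix is to show first that along a minimizing sequence $\g_n$ converging uniformly to $\g$ with $\dot\g_n\rightharpoonup\dot\g$, Gr\"onwall's inequality applied to $|u_{\g_n}-u_\g|$ via (L3) gives \emph{uniform} convergence of $u_{\g_n}$ on $[0,t]$; this decouples the $u$-slot in the limit and reduces the problem to the standard convex lower semicontinuity statement for the $\dot x$-integrand, after which passage to the limit is routine.
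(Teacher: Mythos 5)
The paper itself does not prove this proposition (it is quoted from \cite{WWY1}, and the paper explicitly points to \cite{CCJWY} for an optimal-control re-derivation ``strongly reminiscent of Herglotz' variational principle''). Your route is exactly that Herglotz/optimal-control reformulation: define $h_{x_0,u_0}$ as the value function of the problem in which $u$ is transported along each curve by the scalar ODE $\dot u=L(\gamma,u,\dot\gamma)$, and then read off \eqref{2-1}, the Herglotz Euler--Lagrange system and the Legendre transform to \eqref{c} a posteriori. That is a legitimate and conceptually cleaner path than the fixed-point construction of the implicit action function in \cite{WWY1}, and your Euler--Lagrange and Legendre computations are correct in outline. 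However, two steps as written contain genuine gaps. The dynamic-programming argument only yields one direction of \eqref{2-1}: substituting $u_{\gamma^*}(s)=h_{x_0,u_0}(\gamma^*(s),s)$ along the minimizer $\gamma^*$ gives the integral identity for that one curve, hence $h_{x_0,u_0}(x,t)\geqslant u_0+\inf_{\gamma}\int_0^t L(\gamma,h_{x_0,u_0}(\gamma(\tau),\tau),\dot{\gamma})\,d\tau$. For the reverse inequality you must show that \emph{every} competitor $\tilde\gamma$ obeys $h_{x_0,u_0}(x,t)\leqslant u_0+\int_0^t L(\tilde\gamma,h_{x_0,u_0}(\tilde\gamma(\tau),\tau),\dot{\tilde\gamma})\,d\tau$. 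Since $\partial_u L$ has no sign, this does not follow from $h_{x_0,u_0}(\tilde\gamma(s),s)\leqslant u_{\tilde\gamma}(s)$; it requires first proving the a.e.\ differential inequality $\frac{d}{ds}\,h_{x_0,u_0}(\tilde\gamma(s),s)\leqslant L\bigl(\tilde\gamma(s),h_{x_0,u_0}(\tilde\gamma(s),s),\dot{\tilde\gamma}(s)\bigr)$ by a short-time comparison/Markov argument, which in turn needs a priori Lipschitz regularity of $h_{x_0,u_0}$. That step is missing from the sketch.

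The more serious gap is the lower-semicontinuity fix. Splitting the error $e_n:=u_{\gamma_n}-u_\gamma$ as you suggest, (L3) and Gr\"onwall leave the remainder
\begin{equation*}
|e_n(s)|\leqslant e^{\lambda s}\int_0^s\bigl|L(\gamma_n(\tau),u_\gamma(\tau),\dot\gamma_n(\tau))-L(\gamma(\tau),u_\gamma(\tau),\dot\gamma(\tau))\bigr|\,d\tau,
\end{equation*}
and the right-hand side need \emph{not} vanish when $\gamma_n\to\gamma$ uniformly with only weak $L^1$-convergence of $\dot\gamma_n$ to $\dot\gamma$: take $L(x,u,p)=\|p\|_x^2$ and $\dot\gamma_n$ oscillating about $\dot\gamma$ (so $\int_0^s|\dot\gamma_n|^2$ stays strictly larger than $\int_0^s|\dot\gamma|^2$). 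Thus $u_{\gamma_n}$ does not converge uniformly to $u_\gamma$, and the claimed ``decoupling of the $u$-slot'' that would reduce the problem to Tonelli's theorem fails. Lower semicontinuity of $\gamma\mapsto u_\gamma(t)$ in this topology is true, but it must be obtained by a different mechanism --- exploiting order-preservation of the scalar comparison ODE together with a localization/convexity argument as in \cite{CCJWY} --- rather than by a Gr\"onwall estimate on the error with the limit trajectory.
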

			We call $h_{x_0,u_0}(x,t)$ (resp. $h^{x_0,u_0}(x,t)$) a  forward (resp. backward) implicit action function associated with $L$
			and the curves achieving the infimums in (\ref{2-1}) (resp. \eqref{2-2}) minimizers of $h_{x_0,u_0}(x,t)$ (resp. $h^{x_0,u_0}(x,t)$). The relation between forward and backward implicit action functions is as follows: for any given $x_0$, $x\in M$, $u_0$, $u\in\mathbb{R}$ and $t>0$,
			\begin{align}\label{2-r}
				h_{x_0,u_0}(x,t)=u\quad  \text{if and only if}\quad  h^{x,u}(x_0,t)=u_0.
			\end{align}

			See \cite{CCJWY} for another formulation of variational principles from the optimal control point of view.
			This viewpoint is strongly reminiscent of Herglotz' variational principle \cite{H}.
			The following result is a direct consequence of \eqref{2-r}.
			\begin{prop} For any $x$, $y\in M$, any $u\in\R$ and any $t>0$,
				\begin{itemize}
					\item [(1)] $h^{y,h_{x,u}(y,t)}(x,t)=u,$
					\item [(2)] $h_{y,h^{x,u}(y,t)}(x,t)=u.$
				\end{itemize}
			\end{prop}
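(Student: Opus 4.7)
The plan is to derive both identities as direct instances of the duality \eqref{2-r}, which asserts that $h_{x_0,u_0}(x,t)=u$ is equivalent to $h^{x,u}(x_0,t)=u_0$. No further tool from Proposition \ref{IVP} is needed: each assertion amounts to naming the appropriate ``output'' variable in \eqref{2-r} and then reading off the equivalence in one direction.

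For (1), I would abbreviate $v:=h_{x,u}(y,t)$, so by definition the equation $h_{x_0,u_0}(x',t)=u'$ is satisfied with $(x_0,u_0,x',u')=(x,u,y,v)$. Applying \eqref{2-r} in the forward-to-backward direction with these data gives $h^{y,v}(x,t)=u$, which rewrites as $h^{y,h_{x,u}(y,t)}(x,t)=u$, as desired.

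For (2), the strategy is dual: set $v:=h^{x,u}(y,t)$, so that $h^{x',u'}(x_0,t)=u_0$ holds with $(x',u',x_0,u_0)=(x,u,y,v)$. Applying \eqref{2-r} in the backward-to-forward direction (i.e.\ the converse implication) then yields $h_{y,v}(x,t)=u$, which is exactly $h_{y,h^{x,u}(y,t)}(x,t)=u$.

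The only point to watch is bookkeeping the labels so that they align with the template of \eqref{2-r}, but this is not a genuine obstacle: once the naming convention is fixed, both identities reduce to a single substitution. In particular, no new variational calculation is needed, and the result is best viewed as a symmetric restatement of \eqref{2-r} with the ``target value'' expressed explicitly as an implicit action.
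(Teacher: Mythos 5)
Your proof is correct and follows exactly the route the paper indicates: the paper simply states the proposition is ``a direct consequence of \eqref{2-r}'' and leaves the substitution implicit, which is precisely what you carry out. The bookkeeping of the labels $(x_0,u_0,x,u)$ in both directions is handled correctly, so this matches the intended argument.
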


			\medskip
			
			\noindent $\bullet$ {\bf Implicit action functions}.
			We now collect some basic properties of implicit action functions.

			\begin{prop}\label{pr-af} \
				\begin{itemize}
					\item [(1)] (Monotonicity).
					Given $x_{0} \in M, u_{0}, u_{1}, u_{2} \in \mathbb{R}$, Lagrangians $L_{1}$ and $L_{2}$ satisfying (L1)-(L3),
					\begin{itemize}
						\item [(i)] if $u_{1}<u_{2}$, then $h_{x_{0}, u_{1}}(x, t)<h_{x_{0}, u_{2}}(x, t)$, for all $(x, t) \in M \times(0,+\infty)$;
						\item [(ii)] if $L_{1}<L_{2}$, then $h_{x_{0}, u_{0}}^{L_{1}}(x, t)<h_{x_{0}, u_{0}}^{L_{2}}(x, t)$, for all $(x, t) \in M \times(0,+\infty)$
						where $h_{x_{0}, u_{0}}^{L_{i}}(x, t)$ denotes the forward implicit action function associated with $L_{i}, i=1,2 .$
					\end{itemize}
					\item [(2)] (Lipschitz continuity).
					The function $(x_0,u_0,x,t)\mapsto h_{x_0,u_0}(x,t)$ is  Lipschitz  continuous on $M\times[a,b]\times M\times[c,d]$ for all real numbers $a$, $b$, $c$, $d$
					with $a<b$ and $0<c<d$.
					\item [(3)] (Minimality).
					Given $x_0$, $x\in M$, $u_0\in\mathbb{R}$ and $t>0$, let
					$S^{x,t}_{x_0,u_0}$ be the set of the solutions $(x(s),u(s),p(s))$ of (\ref{c}) on $[0,t]$ with $x(0)=x_0$, $x(t)=x$, $u(0)=u_0$.
					Then
					\[
					h_{x_0,u_0}(x,t)=\inf\{u(t): (x(s),u(s),p(s))\in S^{x,t}_{x_0,u_0}\}, \quad \forall (x,t)\in M\times(0,+\infty).
					\]
					\item [(4)] (Markov property).
					Given $x_0\in M$, $u_0\in\mathbb{R}$,
					\[
					h_{x_0,u_0}(x,t+s)=\inf_{y\in M}h_{y,h_{x_0,u_0}(y,t)}(x,s)
					\]
					for all  $s$, $t>0$ and all $x\in M$. Moreover, the infimum is attained at $y$ if and only if there exists a minimizer $\gamma$ of $h_{x_0,u_0}(x,t+s)$ with $\gamma(t)=y$.
					\item [(5)] (Reversibility).
					Given $x_0$, $x\in M$ and $t>0$, for each $u\in \mathbb{R}$, there exists a unique $u_0\in \mathbb{R}$ such that
					\[
					h_{x_0,u_0}(x,t)=u.
					\]
				\end{itemize}
			\end{prop}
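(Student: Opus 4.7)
The plan is to derive all five items from the implicit variational principle (Proposition \ref{IVP}) together with the Lipschitz bound (L3), which enables Gronwall-type comparison arguments.

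For the two monotonicity assertions in (1), I would fix a minimizer $\gamma_2$ of the \emph{larger} object --- $h_{x_0,u_2}(x,t)$ in (i), $h^{L_2}_{x_0,u_0}(x,t)$ in (ii) --- and evaluate the smaller one along $\gamma_2$ as a suboptimal competitor in its own implicit formula. Subtracting the two relations, condition (L3) yields an integral inequality of the form $w(t)\geqslant (u_2-u_1)-\lambda\int_0^t|w(\tau)|\,d\tau$ for $w(\tau):=h_{x_0,u_2}(\gamma_2(\tau),\tau)-h_{x_0,u_1}(\gamma_2(\tau),\tau)$, and Gronwall then produces $w(t)\geqslant (u_2-u_1)e^{-\lambda t}>0$; case (ii) is parallel, now exploiting $L_2-L_1>0$ uniformly on the compact piece of $TM$ traversed by $\gamma_2$. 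For Lipschitz continuity (2), superlinearity (L2) combined with (L3) gives uniform a priori bounds on the velocities of minimizers as $(x_0,u_0,x,t)$ ranges over a fixed compact box, so minimizers stay in a fixed compact subset of $TM$. Lipschitz dependence on the endpoints $(x_0,x)$ then follows from perturbing and reparametrizing a minimizer, dependence on $u_0$ is precisely the Gronwall bound above, and dependence on $t$ bounded away from $0$ uses a time rescaling plus (L2).

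For minimality (3), one direction is immediate: by Proposition \ref{IVP} the lift $(\gamma_1,u_1,p_1)$ of a minimizer lies in $S^{x,t}_{x_0,u_0}$ with $u_1(t)=h_{x_0,u_0}(x,t)$. Conversely, for any $(x(\cdot),u(\cdot),p(\cdot))\in S^{x,t}_{x_0,u_0}$, the third equation of \eqref{c} together with the Legendre identity gives $\dot u(\tau)=L(x(\tau),u(\tau),\dot x(\tau))$ along the orbit, while $x(\cdot)$ is admissible in the implicit formula; a Gronwall comparison between $u(\tau)$ and $h_{x_0,u_0}(x(\tau),\tau)$ yields $h_{x_0,u_0}(x,t)\leqslant u(t)$. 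The Markov property (4) is standard dynamic programming: splitting any competitor on $[0,t+s]$ at time $t$ and taking $y$ as the intermediate point gives $\geqslant$, and concatenating minimizers of the two subproblems gives $\leqslant$; the characterization of equality in the infimum over $y$ reflects that $y$ lies on a global minimizer. For reversibility (5), the map $\Phi(u_0):=h_{x_0,u_0}(x,t)$ is continuous and strictly increasing by (2) and (1)(i); applying the implicit formula along any fixed smooth curve and using (L3) to compare with the linearized scalar ODE shows $\Phi(u_0)\to\pm\infty$ as $u_0\to\pm\infty$, so $\Phi$ is a continuous monotone bijection of $\R$.

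The most delicate point is propagating \emph{strict} inequalities in (1), since the Gronwall argument naturally gives non-strict bounds; one exploits the positive gap $u_2-u_1>0$ (respectively $L_2-L_1>0$) together with the fact that the larger object is actually \emph{attained} on $\gamma_2$ to keep the inequality strict at the final time. The other nontrivial ingredient is the uniform a priori velocity bound underlying the Lipschitz continuity in (2), which rests on a standard Tonelli-type argument adapted to the compact manifold $M$ and is transplanted from the non-contact setting by using (L3) to confine the admissible range of $u$ along minimizers.
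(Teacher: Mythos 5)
The paper does not prove Proposition~\ref{pr-af} at all: it is stated as a collection of known facts (``Most of the results in this section can be found in \cite{WWY1,WWY2,WWY3,WWY4}''), so there is no in-paper argument to compare against. That said, your blind attempt contains a genuine gap in the Gronwall step for items~(1) and~(3), and it is worth spelling out.

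For~(1)(i) you evaluate the larger function $h_{x_0,u_2}$ along its own minimizer $\gamma_2$ (where the implicit formula holds with equality) and the smaller one as a suboptimal competitor (so only with $\leqslant$). Subtracting produces, as you say, only
\[
w(\tau)\ \geqslant\ (u_2-u_1)-\lambda\int_0^{\tau}|w(s)|\,ds,\qquad w(0^+)=u_2-u_1,
\]
and you then claim ``Gronwall produces $w(t)\geqslant(u_2-u_1)e^{-\lambda t}$.'' This deduction is false: a one-sided integral inequality of this form does not bound $w$ from below. Concretely, with $\lambda=1$, $C=1$, the continuous function that rises from $1$ to $2$ on $[0,1]$ and then drops to, say, $-0.4$ satisfies $w(\tau)\geqslant 1-\int_0^\tau|w|$ throughout (the accumulated positive area makes the right-hand side very negative and hence vacuous), yet $w$ becomes negative. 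Gronwall's lemma in its standard form converts $f\leqslant A+B\int f$ into an \emph{upper} bound on $f$; it gives nothing from $w\geqslant C-\lambda\int|w|$. Exactly the same issue appears in your sketch of~(3): from $u(\tau)=u_0+\int_0^\tau L(x,u,\dot x)$ and the competitor inequality $a(\tau):=h_{x_0,u_0}(x(\tau),\tau)\leqslant u_0+\int_0^\tau L(x,a,\dot x)$ you only get $w:=u-a\geqslant -\lambda\int_0^\tau|w|$ with $w(0)=0$, which again does not force $w\geqslant 0$ by the same kind of counterexample.

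The missing ingredient is a \emph{differential} inequality, not the weaker integral one. Using the Markov property (item (4) of the same proposition, or equivalently Proposition~\ref{pr-sg}(6)) together with the competitor inequality on shrinking subintervals, one obtains
\[
\frac{d}{d\tau}\,h_{x_0,u_1}(\gamma_2(\tau),\tau)\ \leqslant\ L\big(\gamma_2(\tau),h_{x_0,u_1}(\gamma_2(\tau),\tau),\dot\gamma_2(\tau)\big)\quad\text{for a.e.\ }\tau,
\]
while equality holds for $h_{x_0,u_2}$ along $\gamma_2$. Then $\dot w\geqslant -\lambda|w|$ a.e., $w(0^+)=u_2-u_1>0$, and the usual ODE comparison gives $w(\tau)\geqslant(u_2-u_1)e^{-\lambda\tau}>0$, which is the bound you wanted. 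An even cleaner route---closer in spirit to the references \cite{WWY1,CCJWY}---is to pass to the Herglotz (Carath\'eodory) characterization $h_{x_0,u_0}(x,t)=\inf_{\gamma}u_\gamma(t)$, where $u_\gamma$ solves the scalar ODE $\dot u_\gamma=L(\gamma,u_\gamma,\dot\gamma)$, $u_\gamma(0)=u_0$; then (1)(i), (1)(ii) and (3) all follow from the scalar ODE comparison theorem applied \emph{pointwise in} $\gamma$, with no need to argue along a distinguished minimizer. Your sketches of (2), (4) and (5) are in the right spirit, but as written, items (1) and (3) rest on an incorrect application of Gronwall and need one of the two fixes above.
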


			\begin{prop}\label{pr-af1} \
				\begin{itemize}
					\item [(1)]({\it Monotonicity}).
					Given $x_{0} \in M$ and $u_{1}, u_{2} \in \mathbb{R}$, Lagrangians $L_{1}, L_{2}$ satisfying (L1)-(L3),
					\begin{itemize}
						\item [(i)] if $u_{1}<u_{2}$, then $h^{x_{0}, u_{1}}(x, t)<h^{x_{0}, u_{2}}(x, t)$, for all $(x, t) \in M \times(0,+\infty) ;$
						\item [(ii)] if $L_{1}>L_{2}$, then $h_{L_{1}}^{x_{0}, u_{0}}(x, t)<h_{L_{2}}^{x_{0}, u_{0}}(x, t)$, for all $(x, t) \in M \times(0,+\infty)$, where
						$h_{L_{i}}^{x_{0}, u_{0}}(x, t)$ denotes the backward implicit action function associated with $L_{i}$, $i =1,2$.
					\end{itemize}
					\item [(2)] (Lipschitz continuity).
					The function $(x_0,u_0,x,t)\mapsto h^{x_0,u_0}(x,t)$ is  Lipschitz  continuous on $M\times[a,b]\times M\times[c,d]$ for all real numbers $a$, $b$, $c$, $d$
					with $a<b$ and $0<c<d$.
					
					\item [(3)] (Maximality).
					Given $x_0$, $x\in M$, $u_0\in\mathbb{R}$ and $t>0$, let
					$S_{x,t}^{x_0,u_0}$ be the set of the solutions $(x(s),u(s),p(s))$ of  (\ref{c}) on $[0,t]$ with $x(0)=x$, $x(t)=x_0$, $u(t)=u_0$.
					Then
					\[
					h^{x_0,u_0}(x,t)=\sup\{u(0): (x(s),u(s),p(s))\in S_{x,t}^{x_0,u_0}\}, \quad \forall (x,t)\in M\times(0,+\infty).
					\]
					\item [(4)] (Markov property).
					Given $x_0\in M$, $u_0\in\mathbb{R}$,
					\[
					h^{x_0,u_0}(x,t+s)=\sup_{y\in M}h^{y,h^{x_0,u_0}(y,t)}(x,s)
					\]
					for all  $s$, $t>0$ and all $x\in M$. Moreover, the supremum is attained at $y$ if and only if there exists a minimizer $\gamma$ of $h^{x_0,u_0}(x,t+s)$, such that $\gamma(t)=y$.
					\item [(5)] (Reversibility).
					Given $x_0$, $x\in M$, and $t>0$, for each $u\in \mathbb{R}$, there exists a unique $u_0\in \mathbb{R}$ such that
					\[
					h^{x_0,u_0}(x,t)=u.
					\]
				\end{itemize}
			\end{prop}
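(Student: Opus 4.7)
My plan is to derive each clause of Proposition \ref{pr-af1} from the corresponding clause of Proposition \ref{pr-af} via the duality \eqref{2-r}, which exhibits the map $v \mapsto h^{x,v}(x_0,t)$ as the two-sided inverse of $u_0 \mapsto h_{x_0,u_0}(x,t)$ for fixed $x,x_0,t$. Reversibility (5) is then essentially a restatement: the unique $u_0$ with $h^{x_0,u_0}(x,t)=u$ is $u_0 := h_{x,u}(x_0,t)$, existence and uniqueness being built into the biconditional \eqref{2-r}. Monotonicity (1)(i) follows from Proposition \ref{pr-af}(1)(i) by inversion: if $v_i := h^{x_0,u_i}(x,t)$ then $h_{x,v_i}(x_0,t)=u_i$, so $u_1 < u_2$ forces $v_1 < v_2$. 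For (1)(ii) with $L_1 > L_2$, set $v_i := h^{x_0,u_0}_{L_i}(x,t)$; duality at each Lagrangian gives $h^{L_i}_{x,v_i}(x_0,t)=u_0$, and combining Proposition \ref{pr-af}(1)(ii) with strict monotonicity in $v$ forces $v_1 < v_2$.

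For Lipschitz continuity (2), I would apply a Lipschitz implicit-function argument to the defining relation $h_{x_0,v}(x,t) = u_0$. Proposition \ref{pr-af}(2) supplies joint Lipschitz regularity in $(x_0,v,x,t)$ on compacts, and the coercivity in $v$ needed to invert is the quantitative form of (1)(i):
\[
h_{x_0,v}(x,t) - h_{x_0,v'}(x,t) \geq e^{-\lambda t}(v-v') \qquad \text{for } v > v',
\]
obtained by a Gronwall comparison applied to the $u$-equation of \eqref{c} along the minimizing characteristics produced by Proposition \ref{pr-af}(3), using (L3). This estimate inverts to a Lipschitz bound on $v = h^{x_0,u_0}(x,t)$ in $(x_0,u_0,x,t)$ over compact time windows.

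Maximality (3) follows from the variational formula \eqref{2-2} and Proposition \ref{IVP}: the minimizer $\gamma_2$ lifts to a triple in $S_{x,t}^{x_0,u_0}$ realizing $u(0) = h^{x_0,u_0}(x,t)$, giving the $\geq$ direction. For $\leq$, any triple in $S_{x,t}^{x_0,u_0}$ has $\dot u = L(x,u,\dot x)$ along the flow by the Fenchel identity, and inserting $\gamma = x(\cdot)$ as a competitor in \eqref{2-2} together with a Gronwall estimate against the $u$-difference (again leveraging (L3)) yields $u(0) \leq h^{x_0,u_0}(x,t)$. The Markov property (4) is a split/concatenate argument on \eqref{2-2}: restriction of a minimizer for $h^{x_0,u_0}(x,t+s)$ at time $s$ gives a competitor for $h^{y,v}(x,s)$ with $y = \gamma(s)$, $v = h^{x_0,u_0}(y,t)$, yielding $\leq$; concatenating near-minimizers for $h^{y,h^{x_0,u_0}(y,t)}(x,s)$ and $h^{x_0,u_0}(y,t)$ produces an admissible solution triple, where Maximality (3) identifies its endpoint value with the backward action and gives $\geq$. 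The equality-at-$y$ characterization tracks the concatenation of minimizers.

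The principal technical obstacle is the uniform coercivity estimate underlying (2): the defining relation for $h^{x_0,u_0}$ is not pointwise but an infimum coupled to the unknown through its own $u$-argument, so the desired exponential monotonicity cannot be read off by differentiation and must be assembled from a Gronwall comparison along contact characteristics, with (L3) supplying the factor $e^{\pm\lambda t}$ that simultaneously governs the monotonicity gap in $v$ and the Lipschitz constants of $h^{x_0,u_0}$ on compact time intervals.
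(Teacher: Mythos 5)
First, a point of comparison: the paper itself gives no proof of Proposition \ref{pr-af1}; it is recalled as a known preliminary from \cite{WWY1,WWY2,WWY3,WWY4}, so the only meaningful benchmark is the standard proofs in those references. Your master strategy --- treat $v\mapsto h^{x,v}(x_0,t)$ as the inverse of $u_0\mapsto h_{x_0,u_0}(x,t)$ via \eqref{2-r} and transport each clause of Proposition \ref{pr-af} across this involution --- is precisely how the backward action function is handled there, and your arguments for (1), (5) and (2) are correct as sketched. In particular, the two-sided estimate $e^{-\lambda t}(v-v')\leqslant h_{x_0,v}(x,t)-h_{x_0,v'}(x,t)\leqslant e^{\lambda t}(v-v')$ is the right quantitative input for (2): it keeps $v=h^{x_0,u_0}(x,t)$ in a compact set as $(u_0,t)$ ranges over $[a,b]\times[c,d]$ and converts the Lipschitz bound of Proposition \ref{pr-af}(2) into one for $h^{x_0,u_0}$. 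The split/concatenate argument for (4) is likewise the standard one.

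The one step that does not close as written is the inequality $\leqslant$ in Maximality (3). For $(x(s),u(s),p(s))\in S_{x,t}^{x_0,u_0}$, inserting $x(\cdot)|_{[s,t]}$ into \eqref{2-2} and setting $G(s):=h^{x_0,u_0}(x(s),t-s)-u(s)$ yields only the one-sided relation $G(s)\geqslant -\lambda\int_s^t|G(\tau)|\,d\tau$ with $\lim_{s\to t^-}G(s)=0$. Because the right-hand side involves $|G|$ rather than the negative part of $G$, Gronwall does not exclude $G<0$ unless you separately control $G$ from above --- and the variational formula \eqref{2-2}, being $u_0$ minus an infimum, only produces lower bounds for $h^{x_0,u_0}$ through competitors, so no such upper control is available. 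The repair is to follow your own master plan rather than argue directly: the same triple, viewed with the boundary data $x(0)=x$, $u(0)=:w$, $x(t)=x_0$, belongs to the set $S^{x_0,t}_{x,w}$ of Proposition \ref{pr-af}(3), whence $h_{x,w}(x_0,t)\leqslant u(t)=u_0$; since $u_0=h_{x,v}(x_0,t)$ for $v=h^{x_0,u_0}(x,t)$ by \eqref{2-r}, the strict monotonicity of $v\mapsto h_{x,v}(x_0,t)$ forces $w\leqslant v$. With that substitution the proposal is complete.
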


			We will use the following result to prove Proposition \ref{iv} below. See the Appendix for the proof.
			\begin{prop}\label{pr2.3}
				Let $(x(t),u(t)):\mathbb{R}\to M\times\mathbb{R}$ be
				a locally Lipschitz curve satisfying
				\begin{align*}
					u(t_2)=h_{x(t_1),u(t_1)}(x(t_2),t_2-t_1)
				\end{align*}
				for all $t_1$, $t_2\in\mathbb{R}$ with $t_1< t_2$. Then $x(t)$	is of class $C^1$. Let $p(t):=\frac{\partial L}{\partial \dot{x}}(x(t),u(t),\dot{x}(t))$. Then $(x(t),u(t),p(t))$ is a solution of  \eqref{c}. Moreover, for each $t_1$, $t_2\in\mathbb{R}$ with $t_1<t_2$, $x(t)\big|_{[t_1,t_2]}$ is a minimizer of $h_{x(t_1),u(t_1)}(x(t_2),t_2-t_1)$.
			\end{prop}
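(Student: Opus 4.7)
The plan is to show that on every interval $[t_1,t_2]$, the curve $\gamma(s):=x(t_1+s)$, $s\in[0,t_2-t_1]$, attains the infimum in the variational principle \eqref{2-1} defining $h_{x(t_1),u(t_1)}(x(t_2),t_2-t_1)$. Once this is established, Proposition \ref{IVP} directly yields the $C^1$ regularity of $\gamma$ and the fact that the lifted curve $(x(s),u(s),p(s))$, with $p(s):=\frac{\partial L}{\partial \dot{x}}(x(s),u(s),\dot{x}(s))$, solves \eqref{c}; since $t_1<t_2$ are arbitrary, both global assertions of the proposition follow, together with the ``moreover'' minimizer statement.

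The first preparatory step is to check that $u$ is locally Lipschitz. For $t_1$, $t_2$ close together, inserting a straight-line test curve (with velocity bounded by the Lipschitz constant of $x$) into \eqref{2-1} gives $u(t_2)-u(t_1)\leqslant C(t_2-t_1)$, and the symmetric bound follows from the reversibility relation \eqref{2-r} together with the backward variational principle \eqref{2-2}. By Rademacher's theorem, $x$ and $u$ are then differentiable at almost every $s_0$. The crucial pointwise identity to verify at each such $s_0$ is
\[
\dot{u}(s_0)=L\bigl(x(s_0),u(s_0),\dot{x}(s_0)\bigr),
\]
which I would obtain from the short-time asymptotic
\[
h_{x_0,u_0}(y,h)=u_0+hL(x_0,u_0,v)+o(h)\qquad\text{as } h\to 0^+,
\]
valid for $y=\exp_{x_0}(hv)+o(h)$. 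The upper bound uses a straight-line competitor in \eqref{2-1}; the lower bound invokes Proposition \ref{IVP}, since the infimum is realized by a $C^1$ minimizer whose initial velocity converges to $v$ (by continuous dependence of $\Phi_t$ on initial data together with the Legendre duality from (L1)-(L2)), so that its action is $hL(x_0,u_0,v)+o(h)$. Applying the asymptotic with $(x_0,u_0,y)=(x(s_0),u(s_0),x(s_0+h))$ and letting $h\to 0^{\pm}$ yields the identity.

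Integrating $\dot{u}=L(x,u,\dot{x})$ then gives $u(t_2)-u(t_1)=\int_{t_1}^{t_2}L(x(\tau),u(\tau),\dot{x}(\tau))\,d\tau$. On the other hand, applying the hypothesis to the subinterval $[t_1,t_1+\tau]$ gives $h_{x(t_1),u(t_1)}(\gamma(\tau),\tau)=u(t_1+\tau)$ for every $\tau\in(0,t_2-t_1]$, so the variational action of $\gamma$ appearing in \eqref{2-1} equals $\int_{t_1}^{t_2}L(x,u,\dot{x})\,d\tau=u(t_2)-u(t_1)$, which is exactly the infimum $h_{x(t_1),u(t_1)}(x(t_2),t_2-t_1)-u(t_1)$. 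Hence $\gamma$ is a minimizer, and Proposition \ref{IVP} closes the argument. I expect the principal technical hurdle to be the short-time asymptotic, since the implicit variational principle has $h_{x_0,u_0}$ appearing inside its own Lagrangian; however, the combination of Proposition \ref{pr-af}(2) with continuous dependence on initial data for \eqref{c} should make the required uniform estimates routine.
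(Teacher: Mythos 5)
Your proposal is correct in spirit but takes a genuinely different route from the paper. You establish directly the pointwise relation $\dot u(s)=L(x(s),u(s),\dot x(s))$ at almost every $s$, using a short-time expansion $h_{x_0,u_0}(y,h)=u_0+hL(x_0,u_0,v)+o(h)$ for $y\approx\exp_{x_0}(hv)$, then integrate and match against the variational formula \eqref{2-1} (using the hypothesis $h_{x(t_1),u(t_1)}(x(t_1+\tau),\tau)=u(t_1+\tau)$ to evaluate the implicit term inside the Lagrangian) to conclude that $x|_{[t_1,t_2]}$ achieves the infimum, whence Proposition \ref{IVP} finishes. The paper does not touch the short-time asymptotic at all. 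Instead, for a fixed $t_0\in(t_1,t_2)$ it combines the three hypotheses $u(t_2)=h_{x(t_1),u(t_1)}(x(t_2),t_2-t_1)=h_{x(t_0),u(t_0)}(x(t_2),t_2-t_0)$ and $u(t_0)=h_{x(t_1),u(t_1)}(x(t_0),t_0-t_1)$ with the Markov property (Proposition \ref{pr-af}(4)) to produce a minimizer of $h_{x(t_1),u(t_1)}(x(t_2),t_2-t_1)$ passing through $x(t_0)$; it then applies a uniqueness lemma (Lemma \ref{lem2.1}: interior restrictions of minimizers on a strictly larger interval are unique) on $[t_1,t_2+1]$ to show all these minimizers coincide, forcing $x$ itself to be the unique minimizer. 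The paper's route is shorter and avoids estimating the action functional entirely; your route is more hands-on and PDE-flavoured, but the short-time asymptotic is a genuine technical burden — the lower bound in particular needs some work (your appeal to continuous dependence of $\Phi_t$ to control the minimizer's initial velocity as $h\to 0^+$ is plausible but must be made precise, and a Jensen-plus-superlinearity argument may be cleaner), and in the implicit setting one also has to track the deviation $h_{x_0,u_0}(\gamma(\tau),\tau)-u_0=O(\tau)$ uniformly along the minimizer. Finally, your first preparatory step is unnecessary: the hypothesis already asserts that $(x(t),u(t))$ is a locally Lipschitz curve, so the Lipschitz continuity of $u$ is given, not something to re-derive.
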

			
			\medskip

			
			\medskip
			
			\noindent $\bullet$ {\bf Solution semigroups}.
			We collect some basic properties of the solution semigroups.
			\begin{prop}\label{pr-sg}
				Let $\varphi$, $\psi\in C(M,\R)$.
				\begin{itemize}
					\item [(1)](Monotonicity). If $\psi<\varphi$, then $T^{\pm}_t\psi< T^{\pm}_t\varphi$, $\forall t\geqslant 0$.
					\item [(2)](Local Lipschitz continuity). The function $(x,t)\mapsto T^{\pm}_t\varphi(x)$ is locally Lipschitz on $M\times (0,+\infty)$.
					\item[(3)]($e^{\lambda t}$-expansiveness). $\|T^{\pm}_t\varphi-T^{\pm}_t\psi\|_\infty\leqslant e^{\lambda t}\cdot\|\varphi-\psi\|_\infty$,  $\forall t\geqslant 0$.
					\item[(4)] (Continuity at the origin). $\lim_{t\rightarrow0^+}T^{\pm}_t\varphi=\varphi$.
					\item[(5)] (Representation formula). For each  $\varphi\in C(M,\R)$,	
					\begin{itemize}
						\item [(i)]	$T^-_t\varphi(x)=\inf_{y\in M}h_{y,\varphi(y)}(x,t)$,\quad  $\forall (x,t)\in M\times(0,+\infty)$;
						\item [(ii)] $T^+_t\varphi(x)=\sup_{y\in M}h^{y,\varphi(y)}(x,t)$,\quad  $\forall (x,t)\in M\times(0,+\infty)$.
					\end{itemize}
					
					\item[(6)] (Semigroup). $\{T^{\pm}_t\}_{t\geqslant 0}$ are one-parameter semigroup of operators. For all $x_0$, $x\in M$, all $u_0\in\mathbb{R}$ and all $s$, $t>0$, 	\begin{itemize}
						\item [(i)] $
						T^-_sh_{x_0,u_0}(x,t)=h_{x_0,u_0}(x,t+s)$,\quad  $T^-_{t+s}\varphi(x)=\inf_{y\in M}h_{y,T^-_s\varphi(y)}(x,t)$;
						\item [(ii)] $
						T^+_sh^{x_0,u_0}(x,t)=h^{x_0,u_0}(x,t+s)$, \quad  $T^+_{t+s}\varphi(x)=\sup_{y\in M}h^{y,T^+_s\varphi(y)}(x,t)$.
					\end{itemize}
				\end{itemize}
			\end{prop}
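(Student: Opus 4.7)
\textbf{Proof plan for Proposition \ref{pr-sg}.}

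The strategy is to establish the representation formula (5) first, since once we know $T^-_t\varphi(x)=\inf_{y\in M}h_{y,\varphi(y)}(x,t)$ (and the dual for $T^+$), the remaining items follow by transferring the properties of the implicit action functions stated in Propositions \ref{pr-af} and \ref{pr-af1} through the infimum/supremum. To prove (5), I would set $u(x,t):=\inf_{y\in M}h_{y,\varphi(y)}(x,t)$; note that the infimum is attained by compactness of $M$ and continuity of $y\mapsto h_{y,\varphi(y)}(x,t)$ coming from Proposition \ref{pr-af}(2). Picking an optimal $y^*$ and an optimal curve $\gamma$ for $h_{y^*,\varphi(y^*)}(x,t)$ via Proposition \ref{IVP}, the defining identity \eqref{2-1} for $h$ together with the bound $u(\gamma(\tau),\tau)\leqslant h_{y^*,\varphi(y^*)}(\gamma(\tau),\tau)$ yields one direction of the fixed-point equation defining $T^-_t\varphi$; the reverse direction comes from a standard concatenation argument. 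Uniqueness of the solution to the implicit equation then forces $u=T^-_t\varphi$.

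For (1), a strict inequality $\psi<\varphi$ on the compact manifold $M$ gives a uniform gap, and Proposition \ref{pr-af}(1)(i) produces $h_{y,\psi(y)}<h_{y,\varphi(y)}$; evaluating at the minimizer $y^*$ of the $\varphi$-problem and using that as a test point for the $\psi$-problem yields the strict inequality under the infimum. Part (2) is obtained from Proposition \ref{pr-af}(2): on any strip $M\times[c,d]$ the map $(y,u,x,t)\mapsto h_{y,u}(x,t)$ is uniformly Lipschitz, so the family $\{h_{y,\varphi(y)}(\cdot,\cdot)\}_y$ is uniformly Lipschitz in $(x,t)$, hence so is its infimum.

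Part (3) is the key quantitative statement. I would first prove the pointwise estimate $|h_{y,u_1}(x,t)-h_{y,u_2}(x,t)|\leqslant e^{\lambda t}|u_1-u_2|$ by a Gronwall argument along a common minimizer $\gamma$: the values $u_i(s):=h_{y,u_i}(\gamma(s),s)$ satisfy $\dot u_i=L(\gamma,u_i,\dot\gamma)$, so $|u_1-u_2|$ grows at most like $e^{\lambda s}|u_1(0)-u_2(0)|$ by (L3). Combining with the representation formula and $\|\varphi-\psi\|_\infty\leqslant\delta$, then taking infimum over $y$, yields $\|T^-_t\varphi-T^-_t\psi\|_\infty\leqslant e^{\lambda t}\delta$. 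For (4), the constant curve $\gamma\equiv x$ gives $T^-_t\varphi(x)\leqslant\varphi(x)+O(t)$ as $t\to 0^+$; the reverse direction uses superlinearity (L2) to bound uniformly the velocities of near-minimizers, which forces $\gamma(0)\to x$ and hence $\varphi(\gamma(0))\to\varphi(x)$ uniformly in $x$.

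Finally, (6) follows from the Markov property Proposition \ref{pr-af}(4): starting from $T^-_{t+s}\varphi(x)=\inf_{y}h_{y,\varphi(y)}(x,t+s)$ and expanding via $h_{y,\varphi(y)}(x,t+s)=\inf_{z}h_{z,h_{y,\varphi(y)}(z,s)}(x,t)$, one swaps the two infima and recognizes the inner one as $T^-_s\varphi(z)$, yielding the claimed identity; the semigroup law is the special case $\varphi=h_{x_0,u_0}(\cdot,t)$. The $T^+$ statements are derived by the reflection \eqref{2-r} together with the parallel Proposition \ref{pr-af1}. The main technical obstacle I foresee is the Gronwall estimate in (3): along distinct minimizers for $\varphi$ and $\psi$ the curves need not coincide, so one has to play minimizers against each other on both sides and invoke (L3) carefully, which also supplies the building block for all the comparison arguments above.
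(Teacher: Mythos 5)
The paper does not prove Proposition~\ref{pr-sg}: it is recalled in Section~2.1, which opens with ``Most of the results in this section can be found in \cite{WWY1,WWY2,WWY3,WWY4},'' so there is no in-paper proof to compare against, only citations. Your overall architecture --- establish the representation formula~(5) first and then read off (1)--(4) and~(6) by pushing the properties of the implicit action functions (Propositions~\ref{pr-af}, \ref{pr-af1}) through the $\inf/\sup$ --- is the right one and is consistent with how these facts are obtained in the cited works; your Gronwall argument for~(3) (compare along a single minimizer, fix the sign of the difference via \ref{pr-af}(1)(i), then Gronwall) and the Markov-property mechanism for~(6) (after swapping the two infima, pull the inner $\inf$ into the $u_0$-slot of $h$ using monotonicity and continuity in $u_0$) are correct in spirit.

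The one step that does not work as written is your sketch of~(5). You propose to use the one-sided bound $u(\gamma(\tau),\tau)\leqslant h_{y^*,\varphi(y^*)}(\gamma(\tau),\tau)$ together with \eqref{2-1} to obtain one direction of the implicit fixed-point equation defining $T^-_t\varphi$. But under (L1)--(L3) the Lagrangian is \emph{not} monotone in its $u$-argument --- (L3) only gives a Lipschitz bound --- so an inequality in the $u$-slot does not pass through $L$ inside the integral in either direction. What you actually need, and what is true, is \emph{equality} along the minimizer: $u(\gamma^*(\tau),\tau)=h_{y^*,\varphi(y^*)}(\gamma^*(\tau),\tau)$ for every $\tau\in(0,t]$. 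This follows from the Markov property (Proposition~\ref{pr-af}(4)) combined with the monotonicity \ref{pr-af}(1)(i): if some other base point $z$ produced a strictly smaller implicit value at $(\gamma^*(\tau_0),\tau_0)$, then propagating it forward to $(x,t)$ via the Markov decomposition along $\gamma^*$ would give a value strictly below $h_{y^*,\varphi(y^*)}(x,t)$, contradicting the optimality of $y^*$. With that equality in hand, \eqref{2-1} evaluated along $\gamma^*$ becomes precisely the implicit equation characterizing $T^-_t\varphi$, and uniqueness of the solution to that equation finishes~(5). The ``standard concatenation argument'' you gesture at is presumably this, but it should be made explicit because it is the crux of~(5); the naive one-sided comparison you wrote down is not a valid replacement for it.
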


			
			\medskip
			
			\noindent $\bullet$ {\bf Weak KAM solutions}.
			Following Fathi (see, for instance, \cite{Fat-b}), one can define weak KAM solutions of  equation
			\begin{align}\label{shj}
				H(x,u(x),Du(x))=0
			\end{align}
			as follows.

			\begin{de}\label{bwkam}
				A function $u\in C(M,\R)$ is called a backward weak KAM solution of \eqref{shj} if
				\begin{itemize}
					\item [(1)] for each continuous piecewise $C^1$ curve $\gamma:[t_1,t_2]\rightarrow M$, we have
					\begin{align}\label{do}
						u(\gamma(t_2))-u(\gamma(t_1))\leqslant\int_{t_1}^{t_2}L(\gamma(s),u(\gamma(s)),\dot{\gamma}(s))ds;
					\end{align}
					\item [(2)] for each $x\in M$, there exists a $C^1$ curve $\gamma:(-\infty,0]\rightarrow M$ with $\gamma(0)=x$ such that
					\begin{align}\label{cali1}
						u(x)-u(\gamma(t))=\int^{0}_{t}L(\gamma(s),u(\gamma(s)),\dot{\gamma}(s))ds, \quad \forall t<0.
					\end{align}
				\end{itemize}
				
				Similarly, 	a function $v\in C(M,\R)$ is called a forward weak KAM solution of  \eqref{shj} if it satisfies (1) and
				for each $x\in M$, there exists a $C^1$ curve $\gamma:[0,+\infty)\rightarrow M$ with $\gamma(0)=x$ such that
				\begin{align}\label{cali2}
					v(\gamma(t))-v(x)=\int_{0}^{t}L(\gamma(s),v(\gamma(s)),\dot{\gamma}(s))ds,\quad \forall t>0.
				\end{align}
				We say that $u$ in \eqref{do} is a dominated function by $L$, denoted by $u\prec L$.
				We call curves satisfying \eqref{cali1} (resp. \eqref{cali2}) ,  $(u,L,0)$-calibrated curves (resp. $(v,L,0)$-calibrated curves).
			\end{de}
			
			Under assumptions (H1)-(H3), backward weak KAM solutions are viscosity solutions.
			
			\begin{prop}\label{pr-fix}\
				\begin{itemize}
					\item
					[(1)] $u\in\mathcal{S}_-$ if and only if $T^-_tu=u$ for all $t\geqslant 0$.
					\item [(2)] $v\in\mathcal{S}_+$ if and only if $T^+_tv=v$ for all $t\geqslant 0$.
				\end{itemize}
			\end{prop}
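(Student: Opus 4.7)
The plan is to prove part (1); part (2) follows by the symmetric argument on $\{T^+_t\}_{t\geqslant 0}$ via Proposition \ref{pr-sg}(5)(ii) and the backward implicit action. The pivot is the representation formula $T^-_tu(x)=\inf_{y\in M}h_{y,u(y)}(x,t)$ together with the following characterization inherited from Proposition \ref{IVP} and Proposition \ref{pr-af}(3): for any $C^{ac}$ curve $\eta:[0,t]\to M$ from $y$ to $x$, the unique solution $v$ of the scalar ODE $v'(s)=L(\eta(s),v(s),\dot\eta(s))$, $v(0)=u(y)$, satisfies $v(t)\geqslant h_{y,u(y)}(x,t)$, with equality when $\eta$ is a $C^1$ minimizer. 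The whole proof reduces to a scalar comparison between $v$ and $w:=u\circ\eta$.

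For the forward direction $u\in\mathcal{S}_-\Rightarrow T^-_tu=u$, I handle both inequalities along well-chosen curves. To obtain $T^-_tu(x)\leqslant u(x)$, take the calibrated curve $\gamma:(-\infty,0]\to M$ at $x$ provided by \eqref{cali1} and reparametrize $\eta(s)=\gamma(s-t)$: the equality in \eqref{cali1} forces $w=u\circ\eta$ to coincide with the ODE solution $v$ starting at $u(\gamma(-t))$, so $h_{\gamma(-t),u(\gamma(-t))}(x,t)\leqslant u(x)$ and the infimum over $y$ delivers the bound. For $T^-_tu(x)\geqslant u(x)$, fix $y$ and let $\eta$ be a $C^1$ minimizer of $h_{y,u(y)}(x,t)$, so that $v(t)=h_{y,u(y)}(x,t)$; the domination \eqref{do} gives $w(s)-w(0)\leqslant\int_0^sL(\eta,w,\dot\eta)\,d\tau$ while $v$ satisfies the corresponding equality. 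Writing $\phi=w-v$, one has $\phi(0)=0$ and, by (L3), on $\{\phi>0\}$ the function $\phi$ satisfies $\phi(s)\leqslant\lambda\int_0^s\phi^+(\tau)\,d\tau$; Gronwall forces $\phi\leqslant 0$, whence $u(x)=w(t)\leqslant v(t)=h_{y,u(y)}(x,t)$ and, after infimum over $y$, $u(x)\leqslant T^-_tu(x)$.

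For the converse, assume $T^-_tu=u$ for every $t\geqslant 0$. The domination \eqref{do} is immediate: on a piecewise $C^1$ curve $\eta:[t_1,t_2]\to M$, plug the time-shifted $\eta$ into the variational definition of $T^-_{t_2-t_1}u(\eta(t_2))=u(\eta(t_2))$ and invoke the fixed-point identity $T^-_\tau u\equiv u$ inside the integrand. To build the calibrated curve \eqref{cali1} at $x$, use that for each $n\in\mathbb{N}$ the infimum in $T^-_nu(x)=u(x)$ is attained, via the representation formula, by a $C^1$ minimizer $\eta_n:[-n,0]\to M$ with $\eta_n(0)=x$; the Markov property (Proposition \ref{pr-af}(4)) propagates the calibration identity to every intermediate time. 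Uniform Lipschitz bounds on $\eta_n$ over compact sub-intervals follow from (L1)--(L2) and a uniform bound on the action, so Arzel\`a--Ascoli plus diagonal extraction delivers a locally uniform limit $\eta:(-\infty,0]\to M$. Lower semi-continuity of the Lagrangian action against weak convergence of velocities, combined with the domination just proved, upgrades the limiting $\liminf$ inequality to the calibration identity \eqref{cali1}, and Proposition \ref{pr2.3} provides the $C^1$ regularity of $\eta$.

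The main obstacle is the scalar Gronwall comparison in the lower-bound half of the $\Rightarrow$ direction: because $L$ depends on the auxiliary variable $u$, one cannot simply substitute $w=u\circ\eta$ into the ODE solved by $v$, and (L3) is precisely what closes a one-sided comparison and forces $w\leqslant v$ from the scalar dominated inequality for $w$. A secondary technical hurdle is ensuring that the speed bounds on the approximating minimizers $\eta_n$ are genuinely uniform in $n$, which requires compactness of $M$ and the superlinearity (L2) controlling energies along the corresponding contact extremals.
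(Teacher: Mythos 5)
The paper does not prove Proposition \ref{pr-fix}; it is stated in the preliminaries and referenced to the earlier works \cite{WWY1,WWY2,WWY3,WWY4}, so your blind reconstruction has to stand on its own. The overall architecture is the right one: reduce to the scalar comparison along curves, use the calibrated curve for the upper bound, use a Gronwall-type comparison against the fixed-point function for the lower bound, and in the converse direction substitute the fixed-point identity into the variational formula for domination and extract a calibrated ray from minimizers of $T^{-}_{n}u(x)=\inf_{y}h_{y,u(y)}(x,n)$. These are all consistent with the tools listed in the paper (Propositions \ref{IVP}, \ref{pr-af}, \ref{pr-sg}, \ref{pr2.3}).

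There is, however, one step written incorrectly, and it is exactly the step you flag as the main obstacle. From $\phi(0)=0$ and $\phi(s)\leqslant\int_{0}^{s}\bigl(L(\eta,w,\dot\eta)-L(\eta,v,\dot\eta)\bigr)\,d\tau$ one cannot conclude $\phi(s)\leqslant\lambda\int_{0}^{s}\phi^{+}\,d\tau$. By the mean value theorem the integrand equals $\frac{\partial L}{\partial u}(\cdot)\,\phi$, and (L3) only gives $\lvert\frac{\partial L}{\partial u}\rvert\leqslant\lambda$, not $\frac{\partial L}{\partial u}\geqslant 0$. On $\{\phi<0\}$ the term $\frac{\partial L}{\partial u}\phi$ can be as large as $\lambda\lvert\phi\rvert>0=\lambda\phi^{+}$, so the inequality you wrote is false pointwise and the Gronwall bound does not close as stated. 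The standard fix is available and stays elementary: suppose $\phi(s_{0})>0$ and let $s_{1}=\sup\{s\in[0,s_{0}]:\phi(s)\leqslant 0\}$, so $\phi(s_{1})=0$ and $\phi>0$ on $(s_{1},s_{0}]$; then apply domination on the sub-arc $\eta\rvert_{[s_{1},s]}$ and the integral form of the ODE for $v$ on the same sub-arc to get $\phi(s)\leqslant\lambda\int_{s_{1}}^{s}\phi\,d\tau$ there, and Gronwall from $s_{1}$ kills $\phi$. You should state the argument this way; as written it is a genuine gap, because the inequality you invoke need not hold when $\frac{\partial L}{\partial u}$ changes sign, which is allowed under (H3).

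Two smaller remarks. First, for the upper bound $T^{-}_{t}u(x)\leqslant u(x)$ you do not actually need the Gronwall machinery at all: once you know (via Proposition \ref{iv}) that the calibrated curve lifts to a solution of \eqref{c}, Proposition \ref{pr-af}(3) (minimality over trajectories of \eqref{c}) gives $h_{\gamma(-t),u(\gamma(-t))}(x,t)\leqslant u(x)$ directly. Second, your construction of the calibrated ray in the converse direction (minimizers $\eta_{n}$ on $[-n,0]$, Markov property to propagate the calibration to interior times, Arzel\`a--Ascoli plus a diagonal argument, lower semicontinuity of the action) is the right strategy, but the uniform speed bound on $\eta_{n}$ and the identification of the limiting curve as a minimizer require the a priori compactness estimates for minimizers of $h_{y,u(y)}(x,n)$ coming from (H1)--(H2); you gesture at this in the last sentence, and it is worth saying explicitly that these bounds are uniform in $n$ because $u$ is Lipschitz (Lemma \ref{ulipp}) and the energy $H$ is constant along the lifted trajectories, so the superlinearity of $H$ in $p$ confines the velocities.
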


			The following result will be useful for the proof of Proposition \ref{pr1}.  We give the proof in the Appendix 6.2, since it is quite lengthy.
			\begin{prop}\label{iv}
				Let $u\in\mathcal{S}_-$. Given any $x\in M$,
				if $\gamma:(-\infty,0]\rightarrow M$ is a  $(u,L,0)$-calibrated curve  with $\gamma(0)=x$, then $\big(\gamma(t),u(\gamma(t)),p(t)\big)$ satisfies equations \eqref{c} on $(-\infty,0)$, where $p(t)=\frac{\partial L}{\partial \dot{x}}(\gamma(t),u(\gamma(t)),\dot{\gamma}(t))$.
				Moreover, we have
				\[
				\big(\gamma(t+s),u(\gamma(t+s)),Du(\gamma(t+s)\big)=\Phi_{s}\big(\gamma(t),u(\gamma(t)),Du(\gamma(t)\big), \quad\forall t, \ s<0,
				\]
				and
				\[
				H\big(\gamma(t),u(\gamma(t)),\frac{\partial L}{\partial \dot{x}}(\gamma(t),u(\gamma(t)),\dot{\gamma}(t))\big)=0,\quad\forall t< 0.
				\]
			\end{prop}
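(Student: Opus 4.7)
My plan is to derive Proposition~\ref{iv} by combining the calibration identity with Proposition~\ref{pr2.3}, which converts a relation of the form $u(t_2)=h_{x(t_1),u(t_1)}(x(t_2),t_2-t_1)$ into a trajectory of the contact Hamiltonian system. Once this equality is established along the calibrated curve, the smoothness of $\gamma$, the flow identity, and the Hamilton-Jacobi equation along $\gamma$ follow from earlier results and Fenchel duality.

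The core step is to prove that for every $t_1<t_2\leqslant 0$,
\[
u(\gamma(t_2))=h_{\gamma(t_1),u(\gamma(t_1))}(\gamma(t_2),t_2-t_1).
\]
The inequality ``$\leqslant$'' is immediate: since $u\in\mathcal{S}_-$, Proposition~\ref{pr-fix} gives $T^{-}_{t_2-t_1}u=u$, and the representation formula in Proposition~\ref{pr-sg}(5)(i) yields
\[
u(\gamma(t_2))=\inf_{y\in M}h_{y,u(y)}(\gamma(t_2),t_2-t_1)\leqslant h_{\gamma(t_1),u(\gamma(t_1))}(\gamma(t_2),t_2-t_1).
\]
For the reverse inequality, differentiating the calibration identity (Definition~\ref{bwkam}(2)) shows that $\alpha(s):=u(\gamma(s))$ is absolutely continuous on $[t_1,t_2]$ and satisfies $\dot\alpha(s)=L(\gamma(s),\alpha(s),\dot\gamma(s))$ with $\alpha(t_1)=u(\gamma(t_1))$; Lipschitz dependence of $L$ on $u$ (assumption (L3)) makes $\alpha$ the unique such solution. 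The pair $(\gamma|_{[t_1,t_2]},\alpha)$ is then an admissible Herglotz competitor in the variational problem defining $h$---which by Proposition~\ref{pr-af}(3) coincides with the minimum over contact Hamiltonian solutions---forcing $h_{\gamma(t_1),u(\gamma(t_1))}(\gamma(t_2),t_2-t_1)\leqslant\alpha(t_2)=u(\gamma(t_2))$.

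Having established the equality, I apply Proposition~\ref{pr2.3} to $(x(t),u(t)):=(\gamma(t),u(\gamma(t)))$ on $(-\infty,0]$, obtaining that $\gamma$ is $C^1$, that each restriction $\gamma|_{[t_1,t_2]}$ minimizes $h_{\gamma(t_1),u(\gamma(t_1))}(\gamma(t_2),t_2-t_1)$, and that $(\gamma(t),u(\gamma(t)),p(t))$ with $p(t)=\frac{\partial L}{\partial\dot{x}}(\gamma(t),u(\gamma(t)),\dot\gamma(t))$ solves~\eqref{c} on $(-\infty,0)$. To upgrade this to the flow identity involving $Du$, I would verify that $u$ is differentiable at $\gamma(t)$ with $Du(\gamma(t))=p(t)$ for every $t<0$. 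This is a standard weak KAM differentiation argument: pairing the calibration on $[t-\varepsilon,t]$ against straight-line perturbations of $\gamma$ through a nearby point $y$ and using domination $u\prec L$ on these perturbations produces $p(t)\in D^{+}u(\gamma(t))$; the analogous argument on $[t,t+\varepsilon]$, available precisely because $t<0$, gives the matching $p(t)\in D^{-}u(\gamma(t))$, forcing differentiability. The flow identity then reduces to uniqueness for the contact Hamiltonian flow $\Phi_s$ through $(\gamma(t),u(\gamma(t)),p(t))$.

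Finally, the vanishing $H(\gamma(t),u(\gamma(t)),p(t))=0$ drops out of Fenchel duality: differentiating the calibration gives $Du(\gamma(t))\cdot\dot\gamma(t)=L(\gamma(t),u(\gamma(t)),\dot\gamma(t))$, and since $p(t)=\frac{\partial L}{\partial\dot x}$ implies the Fenchel equality $H(x,u,p)=\langle p,\dot x\rangle_x-L(x,u,\dot x)$, we obtain $H(\gamma(t),u(\gamma(t)),p(t))=p(t)\cdot\dot\gamma(t)-L=Du(\gamma(t))\cdot\dot\gamma(t)-L=0$. I expect the main obstacle to be the differentiability claim: establishing both $D^+$ and $D^-$ inclusions simultaneously demands a careful Taylor expansion of the action difference between $\gamma$ and its straight-line perturbations, whereas every other step is routine bookkeeping around the representation formula and Proposition~\ref{pr2.3}.
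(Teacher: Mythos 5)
Your overall strategy matches the paper's: establish the identity $u(\gamma(t_2))=h_{\gamma(t_1),u(\gamma(t_1))}(\gamma(t_2),t_2-t_1)$, apply Proposition~\ref{pr2.3}, obtain differentiability of $u$ at $\gamma(t)$ for $t<0$ via the two-sided perturbation argument (this is precisely Lemmas~\ref{diffe} and~\ref{labell} in the Appendix), and read off $H=0$ from the Fenchel identity. The one step whose justification does not follow from the paper's stated toolkit is the reverse inequality $h_{\gamma(t_1),u(\gamma(t_1))}(\gamma(t_2),t_2-t_1)\leqslant u(\gamma(t_2))$. You cannot get it from Proposition~\ref{pr-af}(3): the infimum there ranges over solutions of the full system~\eqref{c}, whereas the pair $(\gamma,\alpha)$ is at this stage only known to satisfy the scalar Herglotz ODE $\dot\alpha=L(\gamma,\alpha,\dot\gamma)$; producing the momentum equation is exactly what Proposition~\ref{pr2.3} is supposed to deliver afterwards, so appealing to~\ref{pr-af}(3) here is circular. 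The Herglotz-competitor characterization you invoke (minimizing the terminal value of $u$ over pairs satisfying only $\dot u=L(\gamma,u,\dot\gamma)$) would close the gap --- the paper nods to it via~\cite{CCJWY} --- but it is not stated as a proposition in the present paper. The paper's own route is to combine the bound $u(\gamma(\tau))\leqslant h_{\gamma(t_1),u(\gamma(t_1))}(\gamma(\tau),\tau-t_1)$ on $(t_1,t_2]$ (from $T^-$-invariance of $u$ and the representation formula) with the calibration equality along $\gamma$ and the implicit formula~\eqref{2-1} with $\gamma$ as competitor; assumption (L3) then forces equality by a Gronwall-type comparison. With that repair your argument and the paper's proof coincide.
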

			
			\medskip
			
			\begin{re}\label{iv1}
				A similar result holds true for  $v\in \mathcal{S}_+$:  let $v\in\mathcal{S}_+$. Given any $x\in M$,
				if $\gamma:[0,+\infty)\rightarrow M$ is a  $(v,L,0)$-calibrated curve  with $\gamma(0)=x$, then $\big(\gamma(t),v(\gamma(t)),p(t)\big)$ satisfies equations \eqref{c} on $(0,+\infty)$, where $p(t)=\frac{\partial L}{\partial \dot{x}}(\gamma(t),v(\gamma(t)),\dot{\gamma}(t))$.
				Moreover, we have
				\[
				\big(\gamma(t+s),v(\gamma(t+s)),Dv(\gamma(t+s)\big)=\Phi_{s}\big(\gamma(t),v(\gamma(t)),Dv(\gamma(t)\big), \quad\forall t, \ s>0.
				\]
				Since the proof of the above result is quite similar to the one of Proposition \ref{iv}, we omit it.
			\end{re}
			
			\medskip
			Let $u\in \mathcal{S}_-$ and $v\in \mathcal{S}_+$. In view of  Lemma \ref{ulipp} in the Appendix 6.2, both $u$  and $v$ are Lipschitz continuous.

			\subsection{More on solution semigroups}

			\begin{prop}\label{pr-v}
				Let $\varphi\in C(M,\R)$. If the function $(x,t)\mapsto T^-_t\varphi(x)$ is bounded on $M\times[0,+\infty)$, then $\varphi_\infty(x):=\liminf_{t\to+\infty}T^-_t\varphi(x)$ is a viscosity solution of \eqref{shj}.
			\end{prop}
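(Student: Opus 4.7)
The plan is to show that $\varphi_\infty$ is a fixed point of $T^-_s$ for every $s\geqslant 0$; by Proposition \ref{pr-fix}(1) this will make $\varphi_\infty\in\mathcal{S}_-$, in particular a viscosity solution of \eqref{shj}. The preparatory step is an equi-Lipschitz estimate. Set $B:=\sup_{M\times[0,+\infty)}|T^-_t\varphi|<+\infty$. For any $t\geqslant 1$, the semigroup property together with Proposition \ref{pr-sg}(5)(i) yields $T^-_t\varphi(x)=T^-_1(T^-_{t-1}\varphi)(x)=\inf_{y\in M}h_{y,T^-_{t-1}\varphi(y)}(x,1)$. By Proposition \ref{pr-af}(2), the map $x\mapsto h_{y,u_0}(x,1)$ is Lipschitz with a constant $K$ independent of $(y,u_0)\in M\times[-B,B]$; since the infimum of a family of $K$-Lipschitz functions is itself $K$-Lipschitz, the family $\{T^-_t\varphi\}_{t\geqslant 1}$ is equi-Lipschitz with constant $K$ on $M$, and hence $\varphi_\infty$ is $K$-Lipschitz, in particular continuous. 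Thus Proposition \ref{pr-sg}(5)(i) applies to give $T^-_s\varphi_\infty(x)=\inf_y h_{y,\varphi_\infty(y)}(x,s)$ for all $s>0$.

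I first prove $\varphi_\infty\leqslant T^-_s\varphi_\infty$. Fix $x\in M$, $s>0$, $\varepsilon>0$, and pick $y\in M$ with $h_{y,\varphi_\infty(y)}(x,s)<T^-_s\varphi_\infty(x)+\varepsilon$. Choose $\tau_n\to+\infty$ with $T^-_{\tau_n}\varphi(y)\to\varphi_\infty(y)$; by the Lipschitz continuity of $h$ in its $u_0$ entry, $h_{y,T^-_{\tau_n}\varphi(y)}(x,s)\to h_{y,\varphi_\infty(y)}(x,s)$. Proposition \ref{pr-sg}(6)(i) gives $T^-_{s+\tau_n}\varphi(x)=\inf_z h_{z,T^-_{\tau_n}\varphi(z)}(x,s)\leqslant h_{y,T^-_{\tau_n}\varphi(y)}(x,s)$; since the $\liminf$ along a subsequence dominates the full $\liminf$, we obtain $\varphi_\infty(x)\leqslant\liminf_n T^-_{s+\tau_n}\varphi(x)\leqslant T^-_s\varphi_\infty(x)+\varepsilon$, and $\varepsilon\to 0$ closes the estimate.

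For the reverse inequality, choose $\sigma_n\to+\infty$ with $T^-_{\sigma_n}\varphi(x)\to\varphi_\infty(x)$ and set $t_n:=\sigma_n-s$, so $T^-_{s+t_n}\varphi(x)\to\varphi_\infty(x)$. By the equi-Lipschitz bound plus uniform boundedness, Arzel\`a--Ascoli lets me pass to a subsequence so $T^-_{t_n}\varphi\to\psi$ uniformly on $M$ for some Lipschitz $\psi$; Proposition \ref{pr-sg}(3) then gives $T^-_s(T^-_{t_n}\varphi)\to T^-_s\psi$ uniformly, so $T^-_s\psi(x)=\varphi_\infty(x)$. Since $\psi(z)=\lim_n T^-_{t_n}\varphi(z)\geqslant\varphi_\infty(z)$ for every $z$, the monotonicity of $h_{z,\cdot}$ in $u_0$ from Proposition \ref{pr-af}(1)(i), applied inside the representation, yields $T^-_s\varphi_\infty(x)\leqslant T^-_s\psi(x)=\varphi_\infty(x)$. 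Combining both bounds gives $T^-_s\varphi_\infty=\varphi_\infty$ for all $s\geqslant 0$, and Proposition \ref{pr-fix}(1) identifies $\varphi_\infty$ as a backward weak KAM solution of \eqref{shj}, hence a viscosity solution. The main obstacle is the equi-Lipschitz estimate in the first paragraph: without it $\varphi_\infty$ would only be lower semicontinuous, and both the representation formula for $T^-_s\varphi_\infty$ and the Arzel\`a--Ascoli compactness used in the reverse inequality would be unavailable.
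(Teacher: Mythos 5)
Your proof is correct, and it follows the paper's overall strategy: establish the equi-Lipschitz estimate for $\{T^-_t\varphi\}_{t\geqslant 1}$ via the representation formula and Proposition \ref{pr-af}(2), deduce that $\varphi_\infty$ is Lipschitz, show $T^-_s\varphi_\infty=\varphi_\infty$ for all $s\geqslant 0$, and invoke Proposition \ref{pr-fix}(1). The difference lies in how the fixed-point identity is established. The paper does it in one pass by writing $\varphi_\infty(x)=\lim_m\lim_n\min_{m\leqslant t\leqslant n}T^-_s\circ T^-_t\varphi(x)$ and then commuting $T^-_s$ with the running minimum before passing the uniform limit inside $T^-_s$; this relies on a min-plus compatibility of the backward semigroup with pointwise infima that the paper does not pause to justify (it does hold, via $T^-_s\varphi(x)=\inf_y h_{y,\varphi(y)}(x,s)$ together with the monotonicity of $h_{y,\cdot}$, but it is a genuine structural fact). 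You instead split the identity into two one-sided inequalities: for $\varphi_\infty\leqslant T^-_s\varphi_\infty$ you work directly with the representation formula, an $\varepsilon$-optimal $y$, and a $\liminf$-realizing sequence at $y$; for $T^-_s\varphi_\infty\leqslant\varphi_\infty$ you pick a $\liminf$-realizing sequence at $x$, extract a uniformly convergent subsequence $T^-_{t_n}\varphi\to\psi$ by Arzelà--Ascoli, use $e^{\lambda s}$-expansiveness to get $T^-_s\psi(x)=\varphi_\infty(x)$, and compare via $\psi\geqslant\varphi_\infty$ and monotonicity. Your version is a bit longer but more elementary and avoids any implicit appeal to the commutation of $T^-_s$ with infima; the paper's version is terser at the price of an unjustified interchange.
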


			\begin{proof}
				Let $K_1$ be a positive constant such that
				\begin{align}\label{3-10}
					|T^-_t\varphi(x)|\leqslant K_1,\quad \forall
					(x,t)\in M\times[0,+\infty).
				\end{align}
				Recall Proposition \ref{pr-af}(2), i.e., the function $(x_0,u_0,x,t)\mapsto h_{x_0,u_0}(x,t)$ is Lipschitz on $M\times [a,b]\times M\times [c,d]$ for all real numbers $a$, $b$, $c$, $d$
				with $a<b$ and $0<c<d$.

				First we show that $\{T^-_t\varphi(x)\}_{t>1}$ is equi-Lipschitz on $M$.
				Denote by $l_1>0$ a Lipschitz constant of the function $(x_0,u_0,x)\mapsto h_{x_0,u_0}(x,1)$ on $M\times [-K_1,K_1]\times M$. From Proposition \ref{pr-sg} (6)(i), we have
				\[
				|T^-_t\varphi(x)-T^-_t\varphi(y)|\leqslant \sup_{z\in M}|h_{z,T^-_{t-1}\varphi(z)}(x,1)-h_{z,T^-_{t-1}\varphi(z)}(y,1)|
				\]
				for all $x$, $y\in M$, and all $t>1$. In view of \eqref{3-10}, the above inequality implies that
				\[
				|T^-_t\varphi(x)-T^-_t\varphi(y)|\leqslant l_1 \cdot d(x,y).
				\]

				Then let $\varphi_\infty(x):=\liminf_{t\to+\infty}T^-_t\varphi(x)$. We show that $\varphi_\infty$ is a fixed point of $\{T^-_t\}_{t\geqslant 0}$.
				Since $\{T^-_t\varphi(x)\}_{t>1}$ is equi-Lipschitz on $M$, it is easy to see that
				\begin{align}\label{3-11}
					\lim_{t\to+\infty}\inf_{s\geqslant t}T^-_s\varphi(x)=\varphi_\infty(x)\quad \text{uniformly on}\  x\in M.	
				\end{align}
				Note that
				\begin{align*}
					\va_\infty(x)=\liminf_{t\to+\infty}T^-_s\circ T^-_t\va(x),\quad \forall s\geqslant 0.
				\end{align*}
				By the definition of liminf, we have
				\begin{align*}
					\va_\infty(x)&=\lim_{m\to+\infty}\lim_{n\to+\infty}\min_{m\leqslant t\leqslant n}T^-_s\circ T^-_t\va(x)\\
					&=\lim_{m\to+\infty}\lim_{n\to+\infty}T^-_s( \min_{m\leqslant t\leqslant n}T^-_t\va)(x)\\
					&=T^-_s( \lim_{m\to+\infty}\lim_{n\to+\infty}\min_{m\leqslant t\leqslant n}T^-_t\va)(x)\\
					&=T^-_s\va_\infty(x).
				\end{align*}	
			\end{proof}

			\begin{prop}\label{pr-sg1}
				Let $\varphi\in C(M,\R)$. Then
				\begin{itemize}
					\item[(1)] $T^-_t\circ T^+_t\va\geqslant \va$, \quad $\forall t>0$;
					\item[(2)] $T^+_t\circ T^-_t\va\leqslant \va$, \quad $\forall t>0$.
				\end{itemize}
			\end{prop}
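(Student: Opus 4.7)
The plan is to prove both inequalities directly from the representation formulas in Proposition \ref{pr-sg}(5) together with the duality identities $h_{y,h^{x,u}(y,t)}(x,t)=u$ and $h^{y,h_{x,u}(y,t)}(x,t)=u$ from the proposition following \eqref{2-r}, combined with the monotonicity of the implicit action functions in the $u$-argument (Proposition \ref{pr-af}(1)(i) and Proposition \ref{pr-af1}(1)(i)).

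For part (1), I set $\psi := T^+_t\varphi$ and fix $x \in M$. Because $T^+_t\varphi(y)=\sup_{z\in M}h^{z,\varphi(z)}(y,t)$, taking $z=x$ yields
\[
\psi(y)\;\geqslant\; h^{x,\varphi(x)}(y,t),\qquad \forall\, y\in M.
\]
The monotonicity of $h_{y,\cdot}(x,t)$ in the second argument then gives
\[
h_{y,\psi(y)}(x,t)\;\geqslant\; h_{y,\,h^{x,\varphi(x)}(y,t)}(x,t)\;=\;\varphi(x),
\]
where the last equality is the duality identity. Taking the infimum over $y\in M$ and using the representation $T^-_t\psi(x)=\inf_{y\in M}h_{y,\psi(y)}(x,t)$ yields $T^-_t\circ T^+_t\varphi(x)\geqslant \varphi(x)$.

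Part (2) is symmetric: set $\psi := T^-_t\varphi$, fix $x\in M$, and from $T^-_t\varphi(y)=\inf_{z\in M}h_{z,\varphi(z)}(y,t)$ with $z=x$ obtain $\psi(y)\leqslant h_{x,\varphi(x)}(y,t)$. Apply the monotonicity of $h^{y,\cdot}(x,t)$ and the dual identity $h^{y,h_{x,\varphi(x)}(y,t)}(x,t)=\varphi(x)$ to deduce $h^{y,\psi(y)}(x,t)\leqslant \varphi(x)$, and take the supremum over $y$.

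No serious obstacle is expected here; the only subtlety is that Proposition \ref{pr-af}(1)(i) is stated as a strict inequality for strict order, but the non-strict version used above follows immediately from continuity (Proposition \ref{pr-af}(2)) by a limiting argument, or by reading the strict monotonicity as preserving order in both directions.
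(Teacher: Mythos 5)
Your proof is correct and takes essentially the same approach as the paper: both use the representation formulas $T^-_t\psi(x)=\inf_y h_{y,\psi(y)}(x,t)$, $T^+_t\psi(x)=\sup_y h^{y,\psi(y)}(x,t)$, monotonicity of the implicit action functions in the $u$-slot, and the duality identities $h_{y,h^{x,u}(y,t)}(x,t)=u$ and $h^{y,h_{x,u}(y,t)}(x,t)=u$. You simply unpack what the paper does in a single displayed line, and you are slightly more careful in noting that the non-strict monotonicity used here follows from the strict version plus continuity.
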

			
			\begin{proof}
				For any $x\in M$ and $t>0$, we have that	
				\[
				T^-_t\circ T^+_t\va(x)=\inf_{y\in M}h_{y,T^+_t\va(y)}(x,t)\geqslant \inf_{y\in M}h_{y,h^{x,\va(x)}(y,t)}(x,t)=\va(x),
				\]
				and
				\[
				T^+_t\circ T^-_t\va(x)=\sup_{y\in M}h^{y,T^-_t\va(y)}(x,t)\leqslant \sup_{y\in M}h^{y,h_{x,\va(x)}(y,t)}(x,t)=\va(x).
				\]
			\end{proof}

			\medskip
			
			\begin{prop}\label{pr1}\
				\begin{itemize}
					\item [(1)] For each $u\in\mathcal{S}_-$, the uniform limit $\lim_{t\to+\infty}T^+_tu=:v$ exists and $v\in\mathcal{S}_+$.
					\item [(2)] For each $v\in\mathcal{S}_+$, the uniform limit $\lim_{t\to+\infty}T^-_tv=:u$ exists and $u\in\mathcal{S}_-$.
				\end{itemize}
			\end{prop}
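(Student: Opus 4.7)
The plan is to prove part (1) in detail; part (2) then follows by a symmetric (dual) argument, which I indicate at the end. Throughout I would use the equivalence $u\in\mathcal{S}_-\Leftrightarrow T^-_tu=u$ (Proposition~\ref{pr-fix}(1)) and the fact that $u$ is Lipschitz, hence bounded.

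First I would establish monotonicity: Proposition~\ref{pr-sg1}(2) applied with $\varphi=u$ gives $T^+_tu=T^+_tT^-_tu\leqslant u$, and combined with the semigroup and monotonicity statements in Proposition~\ref{pr-sg} this yields $T^+_{t+s}u=T^+_s(T^+_tu)\leqslant T^+_su$. So $t\mapsto T^+_tu(x)$ is pointwise non-increasing, with a limit $v(x):=\lim_{t\to+\infty}T^+_tu(x)\in[-\infty,u(x)]$ for each $x$. The crux of the argument — and the step I expect to be the main obstacle — is to exhibit for every $t>0$ a point $y_t\in M$ at which $T^+_tu(y_t)=u(y_t)$, so that the monotone family does not degenerate to $-\infty$. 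To produce $y_t$, fix any $x_0\in M$ and take a $(u,L,0)$-calibrated curve $\gamma:(-\infty,0]\to M$ with $\gamma(0)=x_0$; set $y_t:=\gamma(-t)$ and $\sigma(s):=\gamma(s-t)$ for $s\in[0,t]$. By Proposition~\ref{iv}, $\bigl(\sigma(s),u(\sigma(s)),\tfrac{\partial L}{\partial \dot x}(\sigma(s),u(\sigma(s)),\dot\sigma(s))\bigr)$ is an orbit of \eqref{c} joining $(y_t,u(y_t))$ to $(x_0,u(x_0))$ in time $t$. Minimality (Proposition~\ref{pr-af}(3)) gives $h_{y_t,u(y_t)}(x_0,t)\leqslant u(x_0)$, while $T^-_tu=u$ combined with $T^-_tu(x_0)=\inf_y h_{y,u(y)}(x_0,t)$ (Proposition~\ref{pr-sg}(5)(i)) gives the reverse inequality; hence $h_{y_t,u(y_t)}(x_0,t)=u(x_0)$. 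Applying the reversibility \eqref{2-r} converts this into $h^{x_0,u(x_0)}(y_t,t)=u(y_t)$, and then Proposition~\ref{pr-sg}(5)(ii) and the already-known bound $T^+_tu\leqslant u$ yield $T^+_tu(y_t)=u(y_t)$.

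With this point of equality I would derive uniform bounds. The upper bound $T^+_tu\leqslant \|u\|_\infty$ is immediate; for the lower bound, for $t>1$ the identity $T^+_tu=T^+_1(T^+_{t-1}u)$ combined with Proposition~\ref{pr-sg}(5)(ii) and the equality at $y_{t-1}$ gives
\[
T^+_tu(x)\geqslant h^{y_{t-1},T^+_{t-1}u(y_{t-1})}(x,1)=h^{y_{t-1},u(y_{t-1})}(x,1),
\]
which is uniformly bounded below because $(y,u_0,x)\mapsto h^{y,u_0}(x,1)$ is continuous on the compact set $M\times[\min u,\max u]\times M$ by Proposition~\ref{pr-af1}(2); the interval $t\in(0,1]$ is handled by continuity of $(x,t)\mapsto T^+_tu(x)$ on $M\times[0,1]$. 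Once uniform boundedness is in hand, the representation $T^+_tu(x)=\sup_y h^{y,T^+_{t-1}u(y)}(x,1)$ together with the Lipschitz continuity in Proposition~\ref{pr-af1}(2) produces an equi-Lipschitz bound for $\{T^+_tu\}_{t>1}$, exactly as in the proof of Proposition~\ref{pr-v}. Monotonicity, uniform boundedness and equi-Lipschitz then imply that $T^+_tu\to v$ uniformly, with $v$ Lipschitz (via Arzel\`a--Ascoli, or Dini once continuity of $v$ is noted). Passing to the limit in $T^+_{t+s}u=T^+_s(T^+_tu)$ using the $e^{\lambda s}$-expansiveness of $T^+_s$ (Proposition~\ref{pr-sg}(3)) yields $T^+_sv=v$ for all $s\geqslant 0$, whence $v\in\mathcal{S}_+$ by Proposition~\ref{pr-fix}(2).

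For part (2) I would run the dual argument: given $v\in\mathcal{S}_+$, Proposition~\ref{pr-sg1}(1) gives $T^-_tv\geqslant v$, so $t\mapsto T^-_tv$ is non-decreasing and bounded below by $\min v$; using a $(v,L,0)$-calibrated curve $\gamma_x:[0,+\infty)\to M$ as in Remark~\ref{iv1}, together with the minimality characterization of $h^{x_0,u_0}$ (Proposition~\ref{pr-af1}(3)) and the reversibility \eqref{2-r}, one symmetrically produces $y_t$ with $T^-_tv(y_t)=v(y_t)$; a uniform upper bound for $T^-_tv$ then follows from the $\inf$-representation $T^-_tv(x)=\inf_y h_{y,T^-_{t-1}v(y)}(x,1)$ by evaluating at $y=y_{t-1}$, after which equi-Lipschitz, uniform convergence to some Lipschitz $u$ with $T^-_su=u$, and $u\in\mathcal{S}_-$ via Proposition~\ref{pr-fix}(1) are obtained exactly as in part (1).
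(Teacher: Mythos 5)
Your argument is correct and follows essentially the same approach as the paper's, which is organized around Propositions~\ref{pr3.3}, \ref{pr3.4} and Corollary~\ref{ueel}: you establish $T^+_tu\leqslant u$, exhibit anchor points $y_t=\gamma(-t)$ on a calibrated curve where $T^+_tu(y_t)=u(y_t)$, deduce uniform boundedness and equi-Lipschitz bounds from the representation formula, and pass to the limit with $e^{\lambda t}$-expansiveness. The only minor difference is that you obtain the anchor equality from the minimality characterization of $h_{\cdot,\cdot}$ together with reversibility \eqref{2-r}, while the paper's Proposition~\ref{pr3.4} invokes the maximality characterization of $h^{\cdot,\cdot}$ directly, and you make explicit the monotonicity of $t\mapsto T^+_tu$ that the paper leaves implicit.
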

			
			\medskip	
			
			Before showing Proposition \ref{pr1}, we need to prove some preliminary results.
			\begin{prop}\label{pr3.3}
				Let $u$, $v\in C(M,\R)$ and let $t\geqslant 0$. Then $v\leqslant T^-_tu$ if and only if $T_t^+v\leqslant u$.
			\end{prop}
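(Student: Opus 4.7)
The plan is to exploit the two inequalities proved in Proposition \ref{pr-sg1}, namely $T^-_t\circ T^+_t\varphi\geqslant\varphi$ and $T^+_t\circ T^-_t\varphi\leqslant\varphi$, together with the monotonicity of the solution semigroups, so that each direction of the biconditional becomes a one-line application of these facts.

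For $t=0$ the statement is trivial since $T^\pm_0$ is the identity, so from now on fix $t>0$. For the forward implication, suppose $v\leqslant T^-_tu$. Applying $T^+_t$ to both sides and using monotonicity of $T^+_t$, we get $T^+_tv\leqslant T^+_t\circ T^-_tu$, and then Proposition \ref{pr-sg1}(2) yields $T^+_t\circ T^-_tu\leqslant u$. Chaining these gives $T^+_tv\leqslant u$. For the reverse implication, suppose $T^+_tv\leqslant u$. Applying $T^-_t$ to both sides and using monotonicity of $T^-_t$, we obtain $T^-_t\circ T^+_tv\leqslant T^-_tu$, and then Proposition \ref{pr-sg1}(1) gives $v\leqslant T^-_t\circ T^+_tv$. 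Combining these, $v\leqslant T^-_tu$.

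The only technical subtlety is that Proposition \ref{pr-sg}(1) is phrased as strict monotonicity ($\psi<\varphi$ implies $T^\pm_t\psi<T^\pm_t\varphi$), whereas the argument above needs non-strict monotonicity. I would bridge this in a short preliminary step: if $f\leqslant g$ in $C(M,\R)$, then for every $\varepsilon>0$ we have $f-\varepsilon<g$, so $T^\pm_t(f-\varepsilon)<T^\pm_t g$; by the $e^{\lambda t}$-expansiveness property in Proposition \ref{pr-sg}(3), $T^\pm_t(f-\varepsilon)\to T^\pm_t f$ uniformly as $\varepsilon\to 0^+$, hence $T^\pm_tf\leqslant T^\pm_tg$. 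This is the main (and only) obstacle, and it is routine; once it is in place, the two implications are immediate.
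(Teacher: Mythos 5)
Your proof is correct, and it takes a genuinely different route from the paper's. The paper proves both implications directly from the representation formulas $T^-_tu(x)=\inf_y h_{y,u(y)}(x,t)$ and $T^+_tv(x)=\sup_y h^{y,v(y)}(x,t)$, together with the duality relation \eqref{2-r} between forward and backward implicit action functions and the monotonicity of $h_{x_0,\cdot}(x,t)$ in the second argument. You instead treat Proposition~\ref{pr-sg1} as a black box and run a Galois-connection style argument: the two inequalities $T^+_t\circ T^-_t\leqslant\mathrm{id}$ and $\mathrm{id}\leqslant T^-_t\circ T^+_t$, combined with monotonicity of the semigroups, give the adjunction $v\leqslant T^-_tu\Leftrightarrow T^+_tv\leqslant u$ in two one-line chains. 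This is cleaner and more modular; it also makes visible that what Proposition~\ref{pr3.3} expresses is precisely the order-theoretic adjunction encoded by Proposition~\ref{pr-sg1}, whereas the paper re-derives the ingredients of that adjunction from scratch. The trade-off is that your route requires non-strict monotonicity of $T^{\pm}_t$, which Proposition~\ref{pr-sg}(1) states only in strict form; you correctly identified this and your $\varepsilon$-approximation bridge via $e^{\lambda t}$-expansiveness closes the gap (the paper itself silently uses non-strict monotonicity of the implicit action functions via the contrapositive of Proposition~\ref{pr-af}(1)(i), so it faces the same issue). It is also worth checking that Proposition~\ref{pr-sg1} is not itself proved using Proposition~\ref{pr3.3}: inspecting its proof, it relies only on the representation formulas and the relation \eqref{2-r}, so there is no circularity.
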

			\begin{proof}
				If $v\leqslant T^-_tu$ for some $t\geqslant 0$, we will show $T_t^+v\leqslant u$.
				It is clear that $T_0^+v(x)=v(x)$. Fix $(x,t)\in M\times (0,+\infty)$. By Proposition \ref{pr-sg}(5)(ii), we have
				\[
				T^{+}_tv(x)=\sup_{y\in M}h^{y,v(y)}(x,t).
				\]
				It suffices to prove that $h^{y,v(y)}(x,t)\leqslant u(x)$ for all $y\in M$.
				Let $\varphi(y):=h^{y,v(y)}(x,t)$ for all $y\in M$. Then by \eqref{2-r}, we have that  $v(y)=h_{x,\varphi(y)}(y,t)$ for all $y\in M$. In view of $v\leqslant T^-_tu$ and Proposition \ref{pr-sg}(5)(i), for each $y\in M$, we get that
				\[
				v(y)\leqslant T_t^-u(y)=\inf_{z\in M}h_{z,u(z)}(y,t),
				\]
				which implies $v(y)\leqslant h_{x,u(x)}(y,t)$,  that is, $h_{x,\varphi(y)}(y,t)\leqslant h_{x,u(x)}(y,t)$. By Proposition \ref{pr-af}(1)(i), we have $\varphi(y)\leqslant u(x)$ for each $y\in M$.

				The converse implication can be proved in a similar manner.
			\end{proof}
			
			\medskip
			%

			The following result is a direct consequence of the above proposition.
			\begin{cor}
				Solution semigroups $\{T^{-}_t\}_{t\geqslant 0}$ and $\{T^{+}_t\}_{t\geqslant 0}$ preserve the set of viscosity subsolutions of equation \eqref{shj}.
			\end{cor}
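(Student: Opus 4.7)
The plan is to combine Proposition \ref{pr3.3}, applied with $v=u$, with the Fathi-type characterization that a function $u\in C(M,\R)$ is a viscosity subsolution of \eqref{shj} if and only if $u\leqslant T^-_tu$ for every $t\geqslant 0$; Proposition \ref{pr3.3} then produces the equivalent dual form $T^+_tu\leqslant u$ for every $t\geqslant 0$. With these two one-sided fixed-point conditions available, each of the two preservation claims becomes a short monotonicity argument.

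For the backward semigroup I would apply Proposition \ref{pr-sg}(1) to the inequality $u\leqslant T^-_tu$ and use the semigroup property to obtain
\[
T^-_su\leqslant T^-_s(T^-_tu)=T^-_{s+t}u=T^-_t(T^-_su),\qquad \forall\, s,t\geqslant 0,
\]
so $T^-_su$ still satisfies the subsolution characterization. For the forward semigroup I would work instead with the dual form $T^+_tu\leqslant u$, apply $T^+_s$ and invoke its monotonicity:
\[
T^+_t(T^+_su)=T^+_{s+t}u=T^+_s(T^+_tu)\leqslant T^+_su,
\]
which is exactly the dual characterization applied to $T^+_su$. Passing back through Proposition \ref{pr3.3} recovers $T^+_su\leqslant T^-_t(T^+_su)$, confirming that $T^+_su$ is a subsolution.

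The only point requiring real work is the preliminary characterization ``$u$ is a subsolution of \eqref{shj} $\iff u\leqslant T^-_tu$''. For the direction $\Rightarrow$ I would first identify subsolutions with dominated functions $u\prec L$ in the sense of Definition \ref{bwkam}(1), then fix a minimizer $\gamma$ of $h_{y,u(y)}(x,t)$ and run a Gr\"onwall estimate on $F(s):=u(\gamma(s))-h_{y,u(y)}(\gamma(s),s)$: one has $F(0)=0$, and the Lipschitz hypothesis (H3) combined with $\dot U(s)=L(\gamma,U,\dot\gamma)$ along the minimizer (from the variational principle) yields $\dot F\leqslant \lambda|F|$, so $F\leqslant 0$; taking the infimum over $y$ in Proposition \ref{pr-sg}(5)(i) gives $u\leqslant T^-_tu$. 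For $\Leftarrow$ I would use the local Lipschitz regularity of $T^-_tu$ from Proposition \ref{pr-sg}(2) and the implicit variational principle of Proposition \ref{IVP} to transfer the global semigroup inequality into the pointwise viscosity inequality on superdifferentials in the standard way. This characterization is the genuine obstacle; once it is in hand, the corollary follows directly from Proposition \ref{pr3.3} and the two displays above.
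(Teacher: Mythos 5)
Your proof is correct and follows the same route the paper has in mind: the authors call the corollary a direct consequence of Proposition~\ref{pr3.3}, leaving implicit the standard characterization that $u$ is a viscosity subsolution of \eqref{shj} iff $u\leqslant T^-_tu$ for all $t\geqslant 0$ (equivalently, by Proposition~\ref{pr3.3} with $v=u$, iff $T^+_tu\leqslant u$), and you explicitly identify this characterization and combine it with monotonicity and the semigroup law exactly as intended. Your Gr\"onwall sketch for the implication $u\prec L\Rightarrow u\leqslant T^-_tu$ is the standard argument in the contact setting and is sound; the only cosmetic point is that Proposition~\ref{pr-sg}(1) is stated with strict inequalities, so one should pass to the non-strict version via the $e^{\lambda t}$-expansiveness in Proposition~\ref{pr-sg}(3).
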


			\begin{prop}\label{pr3.4}
				Let $u\in\mathcal{S}_-$. For each $x\in M$, let $\gamma:(-\infty,0]\to M$  be a $(u,L,0)$-calibrated curve with $\gamma(0)=x$. Then,  $T_t^+u(\gamma(-t))=u(\gamma(-t))$ for all $t\geqslant 0$.
			\end{prop}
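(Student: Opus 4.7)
The plan is to prove the identity by two inequalities, obtaining the ``$\leqslant$'' direction globally and then using the calibration structure to force equality on the curve $\gamma$.

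\textbf{Step 1 (the easy inequality).} I first show that $T^+_t u \leqslant u$ on all of $M$ for every $t \geqslant 0$. Since $u \in \mathcal{S}_-$, Proposition \ref{pr-fix}(1) gives $T^-_t u = u$, so in particular $u \leqslant T^-_t u$. Applying the duality in Proposition \ref{pr3.3} with $v = u$, this is equivalent to $T^+_t u \leqslant u$. Evaluating at the point $\gamma(-t)$ yields $T^+_t u(\gamma(-t)) \leqslant u(\gamma(-t))$.

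\textbf{Step 2 (reducing the opposite inequality).} By the representation formula in Proposition \ref{pr-sg}(5)(ii),
\[
T^+_t u(\gamma(-t)) = \sup_{y \in M} h^{y,u(y)}(\gamma(-t),t) \geqslant h^{x,u(x)}(\gamma(-t),t),
\]
so it suffices to prove $h^{x,u(x)}(\gamma(-t),t) \geqslant u(\gamma(-t))$. Set $u_0 := h^{x,u(x)}(\gamma(-t),t)$. By the reversibility relation \eqref{2-r} one has $h_{\gamma(-t),u_0}(x,t) = u(x)$. Because the map $v \mapsto h_{\gamma(-t),v}(x,t)$ is strictly increasing (Proposition \ref{pr-af}(1)(i)), the inequality $u_0 \geqslant u(\gamma(-t))$ is equivalent to
\[
h_{\gamma(-t),u(\gamma(-t))}(x,t) \leqslant u(x).
\]
Thus the entire task reduces to establishing this last inequality.

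\textbf{Step 3 (using the calibrated curve and minimality).} Reparameterize the calibrated curve by $\tilde\gamma(s) := \gamma(s-t)$ for $s \in [0,t]$, so that $\tilde\gamma(0) = \gamma(-t)$ and $\tilde\gamma(t) = \gamma(0) = x$. Set $\tilde u(s) := u(\tilde\gamma(s))$ and $\tilde p(s) := \frac{\partial L}{\partial \dot x}(\tilde\gamma(s),\tilde u(s),\dot{\tilde\gamma}(s))$. Proposition \ref{iv} guarantees that $(\tilde\gamma,\tilde u,\tilde p)$ solves the contact Hamiltonian system \eqref{c} on $(0,t)$, with the prescribed boundary data $\tilde\gamma(0) = \gamma(-t)$, $\tilde\gamma(t) = x$ and $\tilde u(0) = u(\gamma(-t))$. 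Hence $(\tilde\gamma,\tilde u,\tilde p) \in S^{x,t}_{\gamma(-t),u(\gamma(-t))}$, and its terminal value is $\tilde u(t) = u(x)$. The minimality property in Proposition \ref{pr-af}(3) therefore gives
\[
h_{\gamma(-t),u(\gamma(-t))}(x,t) = \inf_{S^{x,t}_{\gamma(-t),u(\gamma(-t))}} u(t) \leqslant u(x),
\]
which is exactly the required inequality. Combined with Step 1, this yields the equality $T^+_t u(\gamma(-t)) = u(\gamma(-t))$.

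\textbf{Main obstacle.} The nontrivial content is in Step 3, where one needs to recognize the calibrated curve as a \emph{bona fide} orbit of the contact Hamiltonian flow so that it qualifies as a competitor in the minimality formulation of $h_{\cdot,\cdot}$. This is precisely what Proposition \ref{iv} supplies, so once that proposition is invoked the argument is clean; the only care is to match initial and terminal $u$-values correctly under the time-shift $s \mapsto s-t$.
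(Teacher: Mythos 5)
Your proof is correct and takes essentially the same route as the paper: reduce via Proposition \ref{pr3.3} to the inequality $T^+_t u(\gamma(-t)) \geqslant u(\gamma(-t))$, invoke the representation formula to reduce further to a statement about the implicit action function, reparameterize the calibrated curve and use Proposition \ref{iv} to recognize it as a genuine contact orbit, and then conclude by the extremal characterization of the action function. The only deviation is in Steps 2--3: you pass from $h^{x,u(x)}(\gamma(-t),t)$ to $h_{\gamma(-t),\,u(\gamma(-t))}(x,t)$ via the reversibility relation \eqref{2-r} and the monotonicity of Proposition \ref{pr-af}(1)(i), and then apply the minimality property (Proposition \ref{pr-af}(3)); the paper instead applies the maximality property of the backward action (Proposition \ref{pr-af1}(3)) directly to $h^{x,u(x)}(\gamma(-t),t)$, which saves the conversion step. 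The two arguments are dual to one another and yield the same conclusion.
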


			\begin{proof}
				For any given $t>0$, let $z=\gamma(-t)$ and $u_t=u(z)$. By Proposition \ref{pr3.3}, we only need to prove $T_t^+u(z)\geqslant u_t$. By Proposition \ref{pr-sg}(5)(ii), we have
				\begin{align}\label{3-25}
					T_t^+u(z)=\sup_{y\in M}h^{y,u(y)}(z,t)\geqslant h^{x,u(x)}(z,t).
				\end{align}
				So, it suffices to show $h^{x,u(x)}(z,t)\geqslant u_t$.
				By Proposition \ref{iv}, $\big(\gamma(s),u(\gamma(s)),p(s)\big)$ satisfies equations \eqref{c} on $(-\infty,0)$, where $p(s)=\frac{\partial L}{\partial \dot{x}}(\gamma(s),u(\gamma(s)),\dot{\gamma}(s))$. Let ${\bf{u}}(s):=u(\gamma(s-t))$ for $s\in[0,t]$. Then ${\bf{u}}(0)=u(\gamma(-t))=u_t$ and ${\bf{u}}(t)=u(\gamma(0))=u(x)$.  By Proposition \ref{pr-af1}(3), it is clear that
				\[
				h^{x,{\bf{u}}(t)}(z,t)\geqslant u_t.
				\]
				This completes the proof.

			\end{proof}

			\begin{cor}\label{ueel}
				Let $u\in\mathcal{S}_-$.
				The family of functions $\{T_t^+u\}_{t>2}$ is uniformly bounded and equi-Lipschitz on $M$.
			\end{cor}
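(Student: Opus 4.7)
The plan is built around two ingredients used in tandem: an immediate upper bound $T^+_tu\leqslant u$, and a pointwise pinning from Proposition \ref{pr3.4} that forces $T^+_tu$ to coincide with $u$ at a moving point $z_t\in M$ for every $t>0$. Combined with the one-step semigroup representation and the Lipschitz continuity of the backward implicit action, these will give both the uniform bound and the equi-Lipschitz constant. For the upper bound: since $u\in\mathcal{S}_-$, Proposition \ref{pr-fix}(1) gives $T^-_tu=u$, and then Proposition \ref{pr-sg1}(2) yields $T^+_tu=T^+_t(T^-_tu)\leqslant u$, so $T^+_tu\leqslant\|u\|_\infty=:C_0$ uniformly in $t\geqslant 0$ (using that $u$ is Lipschitz, hence bounded, on compact $M$). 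For the pinning: for each $x\in M$ pick a $(u,L,0)$-calibrated curve $\gamma_x$ with $\gamma_x(0)=x$; Proposition \ref{pr3.4} gives $T^+_tu(\gamma_x(-t))=u(\gamma_x(-t))\in[-C_0,C_0]$, so $T^+_tu$ attains a value in $[-C_0,C_0]$ at some $z_t=\gamma_x(-t)\in M$ for every $t>0$.

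For the equi-Lipschitz estimate I would use Proposition \ref{pr-sg}(6)(ii),
\[
T^+_tu(x)=\sup_{y\in M}h^{y,T^+_{t-1}u(y)}(x,1),\quad t>1,
\]
together with Proposition \ref{pr-af1}(2): once a uniform two-sided bound $T^+_{t-1}u(y)\in[-C,C_0]$ is in hand, Proposition \ref{pr-af1}(2) produces a Lipschitz constant $L^\ast$ for $h^{y,u_0}(\cdot,1)$ in $x$ on $M\times[-C,C_0]\times M$; since Lipschitzness in $x$ is preserved by taking the supremum over $y$, the function $T^+_tu$ is itself $L^\ast$-Lipschitz. Combining this Lipschitz estimate with the pinning then yields
\[
T^+_tu(x)\geqslant T^+_tu(z_t)-L^\ast d(x,z_t)\geqslant -C_0-L^\ast\mathrm{diam}(M),
\]
which closes the loop and gives both equi-boundedness and equi-Lipschitzness on $\{t>2\}$.

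The main obstacle I anticipate is the circular nature of this scheme: the Lipschitz estimate presupposes the lower bound $-C$, while the lower bound is extracted from the Lipschitz estimate. My plan to resolve this is a slab-by-slab induction over $t\in[n,n+1]$, $n\geqslant 1$, seeded by the local Lipschitz constant on $M\times[1,2]$ supplied by Proposition \ref{pr-sg}(2) and propagated via the semigroup identity above. The delicate point is to verify that the induction constants stabilize rather than grow unboundedly: if the slab-$n$ range is $[-C_n,C_0]$, then the slab-$(n+1)$ range becomes $[-C_0-L^\ast(-C_n,C_0)\mathrm{diam}(M),C_0]$, so one must check that $L^\ast(a,C_0)$ grows slowly enough as $a\to-\infty$ for the sequence $C_n$ to be bounded. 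This should follow from the Tonelli structure (H1)--(H2) together with the uniform Lipschitz bound (H3), which keep the velocities of the minimizers underlying $h^{y,u_0}(\cdot,1)$ under uniform control for $u_0$ bounded above by $C_0$.
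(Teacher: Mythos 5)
There is a genuine gap. You have all the right ingredients in hand (the upper bound $T^+_tu\leqslant u$ via $T^-_tu=u$, the pinning from Proposition~\ref{pr3.4}, the one-step semigroup identity of Proposition~\ref{pr-sg}(6)(ii), and the Lipschitz continuity of $h^{\cdot,\cdot}(\cdot,1)$ from Proposition~\ref{pr-af1}(2)), but you assemble them in a circular way whose resolution you do not actually supply. Your lower bound $T^+_tu(x)\geqslant T^+_tu(z_t)-L^\ast d(x,z_t)$ requires a Lipschitz constant for $T^+_tu$ itself, which requires a two-sided bound, which requires the lower bound. Your slab-by-slab induction gives a recursion $C_{n+1}=C_0+L^\ast(-C_n,C_0)\,\mathrm{diam}(M)$, and whether $C_n$ stabilizes depends on the growth of $L^\ast$ as the lower endpoint recedes to $-\infty$; you assert this "should follow from the Tonelli structure" but give no argument, and Proposition~\ref{pr-af1}(2) only guarantees Lipschitzness on each compact $u_0$-slab without any quantitative control on how the constant degrades. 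As written, the induction is unjustified.

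The paper's proof eliminates the circularity entirely by using the pinning differently: rather than treating $z_t$ merely as a location where $T^+_tu$ is moderate and then propagating via the (unknown) Lipschitz constant of $T^+_tu$, it plugs the pinning point directly as a candidate into the one-step $\sup$ formula. Concretely, taking $z=\gamma(-t)$ in
\[
T^+_tu(x)=\sup_{z\in M}h^{z,\,T^+_{t-1}u(z)}(x,1)
\]
and using $T^+_{t-1}u(\gamma(-t))=u(\gamma(-t))$ (a time-shifted application of Proposition~\ref{pr3.4}, since the translated curve is still calibrated) gives
\[
T^+_tu(x)\geqslant h^{\gamma(-t),\,u(\gamma(-t))}(x,1),
\]
and the $u_0$-argument here lies in $[-\|u\|_\infty,\|u\|_\infty]$, so the right-hand side is bounded below uniformly in $t$ by the boundedness of $h^{\cdot,\cdot}(\cdot,1)$ on the compact set $M\times[-\|u\|_\infty,\|u\|_\infty]\times M$. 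This yields the two-sided bound without appealing to any Lipschitz estimate for $T^+_tu$; only then is the equi-Lipschitz estimate run, exactly as you propose. The fix to your argument is therefore to replace the inequality $T^+_tu(x)\geqslant T^+_tu(z_t)-L^\ast d(x,z_t)$ with the direct lower bound $T^+_tu(x)\geqslant h^{z_{t-1},\,u(z_{t-1})}(x,1)$ coming from the semigroup supremum, which uses the Lipschitz constant of $h$ rather than that of $T^+_tu$ and so breaks the circle.
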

			
			\begin{proof}
				In order to prove the corollary, we proceed in two steps.
				
				\medskip
				
				\noindent {\bf Step 1}: we first prove the uniform  boundedness of $\{T_t^+u\}_{t>2}$. By Proposition \ref{pr3.3} and the compactness of $M$, the function $(x,t)\mapsto T_t^+u(x)$ is bounded from above on $M\times [0,+\infty)$.

				On the other hand, since $u\in\mathcal{S}_-$, then for any given $y\in M$,
				there is a $(u,L,0)$-calibrated curve $\gamma:(-\infty,0]\to M$ with $\gamma(0)=y$. By Proposition \ref{pr3.4},
				$T_t^+u(\gamma(-t))=u(\gamma(-t))$ for all $t>0$. For any $t>1$ and any $x\in M$, from Proposition \ref{pr-sg}(6), (5)(ii),  we deduce that
				\begin{align*}
					T_t^+u(x)=T_1^+\circ T_{t-1}^+u(x)
					& =\sup_{z\in M}h^{z,T_{t-1}^+u(z)}(x,1)\\
					&\geqslant  h^{\gamma(-t),T_{t-1}^+u(\gamma(-t))}(x,1)\\
					&= h^{\gamma(-t),u(\gamma(-t))}(x,1).
				\end{align*}
				By Proposition \ref{pr-af1}(2), the function $h^{\cdot,\cdot}(\cdot,1)$ is bounded on $M\times[-\|u\|_\infty,\|u\|_\infty]\times M$.
				Thus, the function $(x,t)\mapsto T_t^+u(x)$ is bounded form below on $M\times(1,+\infty)$.
				
				\medskip
				
				\noindent {\bf Step 2}:  we show the equi-Lipschitz property of   $\{T_t^+u\}_{t>2}$. Denote by $K_2>0$ a constant such that $\|T_t^+u\|_\infty\leq K_2$ for all $t>1$.
				In view of Proposition \ref{pr-sg}(6)(ii), for any $x$, $y\in M$, we get that
				\begin{align*}
					|T_t^+u(x)-T_t^+u(y)|&=|\sup_{z\in M}h^{z,T_{t-1}^+u(z)}(x,1)-\sup_{z\in M}h^{z,T_{t-1}^+u(z)}(y,1)|\\
					&\leqslant \sup_{z\in M}|h^{z,T_{t-1}^+u(z)}(x,1)-h^{z,T_{t-1}^+u(z)}(y,1)|.
				\end{align*}
				By Proposition \ref{pr-af1}(2), the function $h^{\cdot,\cdot}(\cdot,1)$ is  Lipschitz on $M\times [-K_2,K_2]\times M$ with a Lipschitz constant $\kappa_1>0$, and thus we get that
				\[|T_t^+u(x)-T_t^+u(y)|\leqslant \kappa_1 d(x,y), \quad \forall t>2.\]
			\end{proof}

			\begin{proof}[Proof of Proposition \ref{pr1}]
				(1) By Proposition  \ref{pr3.3} and Corollary  \ref{ueel}, the uniform limit $\lim_{t\rightarrow+\infty}T_t^+u$ exists. Define
				\[v:=\lim_{t\rightarrow+\infty}T_t^+u.\]
				It follows from Proposition \ref{pr-sg}(3) that, for any given $t\geq 0$, we get that
				\[
				\|T_{t+s}^+u-T_t^+v\|_\infty\leqslant  e^{\lambda t}\|T_s^+u-v\|_\infty, \quad \forall s>0.
				\]
				Letting $s\rightarrow +\infty$, we have
				\[T_t^+v(x)=v(x),\quad \forall x\in M.\]
				By Proposition \ref{pr-fix}(2), we deduce that $v\in \mathcal{S}_+$. The proof of Proposition \ref{pr1}(1) is complete.
				
				\medskip
				
				(2) Now we turn to the proof of Proposition \ref{pr1}(2). Since the proof of (2) is quite similar to the one of (1), we only sketch the strategy of the proof.
				
				Let $v\in\mathcal{S}_+$. By similar arguments used in the proofs of Propositions \ref{pr3.3}, \ref{pr3.4} and Corollary \ref{ueel}, one can show that
				\begin{itemize}
					\item [(a)] $T_t^-v\geqslant v$ for all $t\geqslant 0$.
					\item [(b)] For each $x\in M$, let $\gamma:[0,+\infty)\to M$  be a $(v,L,0)$-calibrated curve with $\gamma(0)=x$. Then,  $T_t^-v(\gamma(t))=v(\gamma(t))$ for all $t\geqslant 0$.
					\item [(c)] The family of functions $\{T_t^-v\}_{t>2}$ is uniformly bounded and equi-Lipschitz on $M$.
				\end{itemize}
				Using the above three facts, we deduce that the uniform limit
				$\lim_{t\to+\infty}T_t^-v=:u$ exists and that $u\in\mathcal{S}_-$.

				The proof of Proposition \ref{pr1} is now complete.
			\end{proof}

			%
			%


			\medskip
			
			\section{Existence of solutions of the generalized ergodic problem}
			
			\setcounter{equation}{0}
			
			We prove Theorem \ref{thA} in this section. Theorem \ref{thA} is a direct consequence of Proposition \ref{thG} and Proposition \ref{thH} below.
			
			%
			
			\medskip

			\subsection{Necessary and sufficient conditions for the existence  I}
			
			This part is devoted to the following result.	
			\begin{prop}\label{thG}
				Let $c\in\R$. The following  statements are equivalent.
				\begin{itemize}
					\item [(1)] Equation \eqref{shjc} has viscosity solutions;
					\item [(2)] There exist $\varphi$, $\psi\in C(M,\R)$ and $t_1$, $t_2\in \R_+$ such that $T^{+,c}_{t_1}\varphi\leqslant \varphi$, $T^{+,c}_{t_2}\psi\geqslant \psi$.
				\end{itemize}
			\end{prop}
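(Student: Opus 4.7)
The plan is to prove both directions; $(1) \Rightarrow (2)$ is immediate from the semigroup machinery in the preliminaries, and the substance lies in $(2) \Rightarrow (1)$, whose strategy is to use the two semigroup conditions jointly to produce a semigroup orbit that is uniformly bounded on $M \times [0,+\infty)$, and then to apply Proposition \ref{pr-v} (in its $L+c$ form) to extract a viscosity solution as a $\liminf$.

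For $(1) \Rightarrow (2)$, let $u$ be a viscosity solution of \eqref{shjc}, so $u$ is a backward weak KAM solution for $L+c$ and $T^{-,c}_t u = u$ for every $t \geqslant 0$ by the $L+c$ form of Proposition \ref{pr-fix}(1). Setting $\varphi := u$, Proposition \ref{pr-sg1}(2) gives
\[
T^{+,c}_t \varphi \;=\; T^{+,c}_t T^{-,c}_t u \;\leqslant\; u \;=\; \varphi
\]
for any $t > 0$. For $\psi$, apply the $L+c$ form of Proposition \ref{pr1}(1) to $u$: the uniform limit $v := \lim_{t \to +\infty} T^{+,c}_t u$ exists and is a forward weak KAM solution for $L+c$, so $T^{+,c}_t v = v$ for every $t \geqslant 0$ by Proposition \ref{pr-fix}(2); take $\psi := v$ and any $t_2 > 0$.

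For $(2) \Rightarrow (1)$, monotonicity of $T^{+,c}_s$ (Proposition \ref{pr-sg}(1)) together with the semigroup property (Proposition \ref{pr-sg}(6)) shows, by induction, that iterating $T^{+,c}_{t_1}\varphi \leqslant \varphi$ yields $T^{+,c}_{n t_1}\varphi \leqslant \varphi$ for every $n \geqslant 0$. Decomposing an arbitrary $t \geqslant 0$ as $t = n t_1 + s$ with $s \in [0, t_1]$ and using monotonicity gives $T^{+,c}_t\varphi = T^{+,c}_s(T^{+,c}_{n t_1}\varphi) \leqslant T^{+,c}_s\varphi$; since $(x, s) \mapsto T^{+,c}_s\varphi(x)$ is continuous on the compact set $M \times [0, t_1]$ by Propositions \ref{pr-sg}(2) and (4), this forces $T^{+,c}_t\varphi$ to be uniformly bounded above on $M \times [0, +\infty)$. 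Dually, via Proposition \ref{pr3.3} the condition $T^{+,c}_{t_1}\varphi \leqslant \varphi$ is equivalent to $T^{-,c}_{t_1}\varphi \geqslant \varphi$, and the same $t = n t_1 + s$ decomposition gives a uniform lower bound on $T^{-,c}_t\varphi$ over $M \times [0, +\infty)$. The symmetric reasoning starting from $T^{+,c}_{t_2}\psi \geqslant \psi$ gives a uniform lower bound on $T^{+,c}_t\psi$. The remaining task is to combine these one-sided bounds into a full two-sided bound on the specific orbit $T^{-,c}_t\varphi$; once this upper bound is in place, Proposition \ref{pr-v} applied in its $L+c$ form gives $u := \liminf_{t \to +\infty} T^{-,c}_t\varphi$ as a viscosity solution of \eqref{shjc}, completing the proof.

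The main obstacle is precisely this upgrade: producing a uniform upper bound on $T^{-,c}_t\varphi$ from $\psi$. The $e^{\lambda t}$-expansiveness estimate of Proposition \ref{pr-sg}(3) grows exponentially in $t$ and is therefore inadequate for a naive sandwich of $T^{-,c}_t\varphi$ against the $T^{-,c}_t\psi$ orbit based solely on $\|\varphi - \psi\|_\infty$. The resolution will use the sub-fixed structure $T^{+,c}_{t_2}\psi \geqslant \psi$ in a structural rather than quantitative way: via the representation formulas of Proposition \ref{pr-sg}(5), the Lipschitz estimates on the implicit action functions (Propositions \ref{pr-af}(2) and \ref{pr-af1}(2)), and the inequalities $T^{-,c}_t T^{+,c}_t f \geqslant f$ and $T^{+,c}_t T^{-,c}_t f \leqslant f$ from Proposition \ref{pr-sg1}, one should compare orbits pointwise along minimizing curves of the implicit action functions and extract a $t$-independent upper bound, bypassing the exponentially growing expansiveness constant.
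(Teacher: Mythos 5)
Your $(1)\Rightarrow(2)$ direction is correct and matches the paper, but the $(2)\Rightarrow(1)$ direction has a genuine gap: you identify the hard step (upgrading the one-sided bounds to a two-sided bound on $T^{-,c}_t\varphi$) but do not actually solve it, and the strategy you sketch cannot work as stated. The plan to bound $T^{-,c}_t\varphi$ from above so as to apply Proposition \ref{pr-v} to that particular orbit is doomed in general: under hypothesis $(2)$, after converting $T^{+,c}_{t_1}\varphi\leqslant\varphi$ to $T^{-,c}_{t_1}\varphi\geqslant\varphi$, the only dichotomy available (Corollary \ref{co5-1}) is that $T^{-,c}_{nt_1}\varphi$ either converges uniformly to a Lipschitz function \emph{or diverges uniformly to $+\infty$}. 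In the second case no upper bound on $T^{-,c}_t\varphi$ exists, yet a viscosity solution may still exist, so the argument must change target, not merely sharpen estimates. Gesturing at representation formulas and ``comparison along minimizing curves'' does not address the structural obstacle, which is that $t_1$ and $t_2$ need not be commensurable, so the sub-fixed orbits of $\varphi$ and $\psi$ live on different time lattices with no a priori relation.

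The paper's resolution of the divergent case is the idea you are missing: once $T^{-,c}_{nt_1}\varphi\to+\infty$, one uses a rational approximation to choose $n_1, k_1, k_2\in\mathbb{N}$ and a small $s_0>0$ so that $t_0:=k_2(n_1t_1+s_0)=k_1t_2$ is simultaneously a multiple of $t_2$ and of a slightly perturbed $t_1$-block on which the strict inequalities $T^{-,c}_{n_1t_1+s_0}\varphi>\varphi$ and $T^{-,c}_{n_1t_1+s_0}\varphi>\psi$ still hold. Setting $\varphi':=T^{-,c}_{t_0}\varphi$, Propositions \ref{pr3.3}, \ref{pr-sg1} and monotonicity give the sandwich $\psi\leqslant T^{+,c}_{nt_0}\psi\leqslant T^{+,c}_{nt_0}\varphi'\leqslant\varphi$, so it is the \emph{forward} orbit of $\psi$ (not the backward orbit of $\varphi$) that is bounded; from there $\psi'_\infty:=\limsup_{t\to+\infty}T^{+,c}_t\psi$ is shown to be a forward weak KAM solution, and Proposition \ref{pr1}(2) converts it into an element of $\mathcal{S}_-$, i.e.\ a viscosity solution of \eqref{shjc}. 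Without this change of target and the common-period construction, your proposal does not close.
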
	
			
			\medskip

			In the rest of subsection 3.1, without any loss of generality we assume that $c=0$. Consider the contact Hamilton-Jacobi equation
				\begin{align}\label{s0}
				H(x,u(x),Du(x))=0.
			\end{align}
			
			\begin{lem}\label{lem5-1}
				Let $\va\in C(M,\R)$. If $\va\leqslant T^{-}_t\va$  (resp. $\va\geqslant T^{-}_t\va$) for all $t\geqslant 0$, then either
				\[
				\lim_{t\to+\infty}T^{-}_t\va(x)=+\infty \quad  (\text{resp}. \ \lim_{t\to+\infty}T^{-}_t\va(x)=-\infty)
				\]
				uniformly on  $x\in M$, or
				\[
				\lim_{t\to+\infty}T^{-}_t\va(x)=\va_\infty(x)
				\]
				uniformly on $x\in M$, where $\va_\infty(x)$ is  a viscosity solution of \eqref{s0}.
			\end{lem}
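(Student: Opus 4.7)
The hypothesis $\va\leqslant T^-_t\va$ propagates under Proposition \ref{pr-sg}(1) (monotonicity): applying $T^-_s$ to both sides yields $T^-_s\va \leqslant T^-_{s+t}\va$, so $t\mapsto T^-_t\va(x)$ is non-decreasing for every $x\in M$ and bounded below by $\min_M\va$. Hence the pointwise limit $\va_\infty(x):=\lim_{t\to+\infty}T^-_t\va(x)\in(-\infty,+\infty]$ exists. My plan is to split according to whether $T^-_t\va$ is uniformly bounded above on $M\times[0,+\infty)$.

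If the bound holds, then $T^-_t\va$ is bounded on $M\times[0,+\infty)$, so Proposition \ref{pr-v} yields that $\liminf_{t\to+\infty}T^-_t\va$ is a viscosity solution of \eqref{s0}; since the family is monotone non-decreasing, this liminf coincides with $\va_\infty$. The proof of Proposition \ref{pr-v} also shows that $\{T^-_t\va\}_{t>1}$ is equi-Lipschitz, hence $\va_\infty$ is Lipschitz continuous. On the compact space $M$, monotone pointwise convergence of continuous functions with continuous limit upgrades to uniform convergence by Dini's theorem.

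The main obstacle is to show that if the upper bound fails then $T^-_t\va\to+\infty$ \emph{uniformly}. I argue by contradiction: assume $\sup_{(x,t)}T^-_t\va(x)=+\infty$ yet there exist $K>0$, $t_n\to+\infty$, and $x_n\in M$ with $T^-_{t_n}\va(x_n)\leqslant K$. The semigroup formula from Proposition \ref{pr-sg}(6)(i), combined with taking $z=x_n$ in the infimum and using monotonicity of $h_{z,u_0}$ in $u_0$ from Proposition \ref{pr-af}(1)(i), gives
\[
T^-_{t_n+1}\va(y)=\inf_{z\in M}h_{z,T^-_{t_n}\va(z)}(y,1)\leqslant h_{x_n,T^-_{t_n}\va(x_n)}(y,1)\leqslant h_{x_n,K}(y,1).
\]
By Proposition \ref{pr-af}(2) the function $(x,y)\mapsto h_{x,K}(y,1)$ is bounded on $M\times M$ by some $B<+\infty$. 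Since $t\mapsto T^-_t\va$ is non-decreasing, for any fixed $t\geqslant 0$ we may choose $n$ with $t_n+1\geqslant t$ to conclude $T^-_t\va(y)\leqslant T^-_{t_n+1}\va(y)\leqslant B$ for all $y\in M$, contradicting the unboundedness assumption. Hence $\min_{y\in M}T^-_t\va(y)\to+\infty$, which is the uniform divergence claim.

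For the ``$\geqslant$'' statement, $\va\geqslant T^-_t\va$ gives a non-increasing family bounded above by $\max_M\va$; the bounded case is then identical, producing a viscosity solution $\va_\infty$ as a uniform limit. In the unbounded-below case, if there were $K>0$, $t_n\to+\infty$, $x_n\in M$ with $T^-_{t_n}\va(x_n)\geqslant -K$, then the infimum formula forces $h_{z,T^-_{t_n-1}\va(z)}(x_n,1)\geqslant -K$ for \emph{every} $z\in M$; reversibility (Proposition \ref{pr-af}(5)) together with the continuity provided by Proposition \ref{pr-af}(2) then yields a constant $K'$ independent of $n$ such that $T^-_{t_n-1}\va(z)\geqslant -K'$ for all $z\in M$, and propagating this uniform bound backwards by the non-increasingness of $t\mapsto T^-_t\va$ contradicts the unboundedness below.
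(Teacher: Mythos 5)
Your proof is correct and takes essentially the same route as the paper: you dichotomize on boundedness of $T^-_t\va$, invoke Proposition \ref{pr-v} in the bounded case, and use the implicit-action-function estimates through the infimum representation of $T^-_{t+1}\va$ (together with reversibility and (\ref{2-r}) in the non-increasing case) to show that failure of boundedness forces uniform divergence. The only presentational difference is that you make the monotonicity of $t\mapsto T^-_t\va(x)$ explicit and cite Dini's theorem for the uniform convergence, whereas the paper reaches the same conclusion via the equi-Lipschitz bounds built into the proof of Proposition \ref{pr-v}.
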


			\begin{proof} We divide the proof in two steps.
				
				\medskip
				\noindent {\bf Step 1}:
				First, we consider the case $\varphi\leqslant T^{-}_t\varphi$ for all $t\geqslant 0$.
				In view of $\varphi\leqslant T^{-}_t\varphi$,
				if $\lim_{t\to+\infty}T^{-}_t\varphi(x_0)=B<+\infty$ for some $x_0\in M$, then for each $x\in M$, we have
				\begin{align*}
					T^{-}_t\varphi(x)=(T^{-}_1\circ T^{-}_{t-1})\varphi(x)=\inf_{y\in M}h_{y,T^{-}_{t-1}\varphi(y)}(x,1)\leqslant h_{x_0,T^{-}_{t-1}\varphi(x_0)}(x,1)\leqslant h_{x_0,B}(x,1)<+\infty.
				\end{align*}
				Thus, we deduce that for any $t>1$,
				\begin{align}\label{5-1}
					-\|\va\|_\infty\leqslant T^{-}_t\varphi(x)\leqslant\max_{y,y'\in M}h_{y,B}(y',1),\quad \forall x\in M.
				\end{align}
				So, there are two possibilities: (i) $\lim_{t\to+\infty}T^{-}_t\varphi(x)=\varphi_\infty(x)$ for all $x\in M$. In view of \eqref{5-1}, by Proposition
				\ref{pr-v}, $\lim_{t\to+\infty}T^{-}_t\varphi(x)=\varphi_\infty(x)$ uniformly on $x\in M$, and $\va_\infty$ is a viscosity solution of \eqref{s0}. (ii) $\lim_{t\to+\infty}T^{-}_t\varphi(x)=+\infty$ for all $x\in M$. Next, we prove
				$\lim_{t\to+\infty}T^{-}_t\varphi(x)=+\infty$ uniformly on $x\in M$. Suppose not. Then there are $K_0>0$, $\{t_n\}\nearrow+\infty$ and $x_n\in M$, such that
				\[
				T^{-}_{t_n}\va(x_n)\leqslant K_0.
				\]
				Then for any $n\in \mathbb{N}$, any $x\in M$
				\[
				T^{-}_{t_n+1}\va(x)\leqslant h_{x_n,T^{-}_{t_n}\va(x_n)}(x,1)\leqslant h_{x_n,K_0}(x,1)\leqslant \max_{y',y''\in M}h_{y',K_0}(y'',1)<+\infty,
				\]
				which contradicts $\lim_{t\to+\infty}T^{-}_t\varphi(x)=+\infty$ for all $x\in M$.
				
				\medskip
				
				\noindent {\bf Step 2}: 	
				Second, we deal with the case $\varphi\geqslant T^{-}_t\varphi$ for all $t\geqslant 0$. If $\lim_{t\to+\infty}T^-_t\varphi(x_0)=B'>-\infty$ for some $x_0\in M$, then for any $y\in M$, we have that
				\[
				v_t:=h_{y,T^{-}_t\va(y)}(x_0,1)\geqslant T^{-}_{t+1}\va(x_0)\geqslant B'.
				\]
				So, we get that
				\[
				h^{x_0,v_t}(y,1)=T^{-}_t\va(y).
				\]
				Thus, one can deduce that for any $t>0$
				\begin{align}\label{5-21}
					\|\va\|_\infty\geqslant\va(y)\geqslant T^{-}_t\va(y)\geqslant h^{x_0,B'}(y,1)\geqslant -\|h^{x_0,B'}(\cdot,1)\|_\infty,\quad \forall y\in M.
				\end{align}
				So, there are two possibilities: (i) $\lim_{t\to+\infty}T^{-}_t\varphi(x)=\varphi'_\infty(x)$ for all $x\in M$. In view of \eqref{5-21}, by Proposition
				\ref{pr-v}, $\lim_{t\to+\infty}T^{-}_t\varphi(x)=\varphi'_\infty(x)$ uniformly on $x\in M$, and $\va'_\infty$ is a viscosity solution of \eqref{s0}. (ii) $\lim_{t\to+\infty}T^{-}_t\varphi(x)=-\infty$ for all $x\in M$. Next, we prove
				$\lim_{t\to+\infty}T^{-}_t\varphi(x)=-\infty$ uniformly on $x\in M$. Suppose not. Then there are $K'_0<0$, $\{t'_n\}\nearrow+\infty$ and $x'_n\in M$, such that
				\[
				T^{-}_{t'_n}\va(x'_n)\geqslant K'_0.
				\]
				Given any $y\in M$, let
				\[
				v'_n:=h_{y,T^{-}_{t'_n-1}\va(y)}(x'_n,1)\geqslant T^{-}_{t'_n}\va(x'_n)\geqslant K'_0.
				\]
				By \eqref{2-r},
				\[
				h^{x'_n,v'_n}(y,1)=T^{-}_{t'_n-1}\va(y).
				\]
				Thus,
				\[
				T^{-}_{t'_n-1}\va(y)\geqslant h^{x'_n,K'_0}(y,1)\geqslant \min_{z,z'\in M}h^{z,K'_0}(z',1)>-\infty,
				\]
				which contradicts $\lim_{t\to+\infty}T^{-}_t\varphi(y)=-\infty$.

				The proof is complete.
			\end{proof}

			The following  result is a direct consequence of Lemma \ref{lem5-1}. We omit the proof.
			
			\begin{cor}\label{co5-1} Let $\va\in C(M,\R)$.
				If $\va\leqslant T^{-}_{t_0}\va$ (resp. $\va\geqslant T^{-}_{t_0}\va$) for some $t_0> 0$, then either  \[\lim_{n\to+\infty}T^{-}_{nt_0}\va(x)=+\infty\quad  (\text{resp.}\  \lim_{n\to+\infty}T^{-}_{nt_0}\va(x)=-\infty)
				\] uniformly on $x\in M$, or $\lim_{n\to+\infty}T^{-}_{nt_0}\va(x)=\va_\infty(x)$ uniformly on $x\in M$, where $\va_\infty(x)\in\mathrm{Lip}(M)$.
			\end{cor}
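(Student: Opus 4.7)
The plan is to transpose the argument of Lemma \ref{lem5-1} into the discrete-time setting along the sampled times $\{nt_0\}_{n\in\mathbb{N}}$. Consider first the case $\va \leqslant T^{-}_{t_0}\va$. The first step would be to show that $\va_n := T^{-}_{nt_0}\va$ is monotone nondecreasing in $n$. From $\va - \eps < T^{-}_{t_0}\va$ for $\eps > 0$, applying $T^{-}_{t_0}$ and using Proposition \ref{pr-sg}(1) gives $T^{-}_{t_0}(\va - \eps) < T^{-}_{2t_0}\va$, and passing $\eps \to 0^+$ via the $e^{\lambda t}$-expansiveness from Proposition \ref{pr-sg}(3) yields $T^{-}_{t_0}\va \leqslant T^{-}_{2t_0}\va$; iterating produces $\va_n \leqslant \va_{n+1}$ for every $n \geqslant 0$.

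Next I would set up the dichotomy as in Lemma \ref{lem5-1}. If $\sup_n \va_n(x_0) < +\infty$ for some $x_0\in M$, then the Markov identity in Proposition \ref{pr-sg}(6)(i) gives
\[
\va_{n+1}(x) = \inf_{y \in M} h_{y,\va_n(y)}(x,t_0) \leqslant h_{x_0,\va_n(x_0)}(x,t_0),
\]
and Lipschitz continuity of $h$ in $u$ (Proposition \ref{pr-af}(2)) yields a uniform upper bound on $\{\va_n\}$ over $M$, forcing pointwise convergence everywhere to some finite $\va_\infty$. The same representation combined with Lipschitz continuity of $h_{y,u}(x,t_0)$ in $x$ shows that $\{\va_n\}_{n \geqslant 1}$ is equi-Lipschitz with a uniform constant, so $\va_\infty \in \mathrm{Lip}(M)$ and the convergence is uniform. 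Otherwise $\va_n(x) \to +\infty$ for every $x$, and the contradiction argument used in Step 1 of Lemma \ref{lem5-1} would apply verbatim with $t_n$ replaced by $n_k t_0$: a putative sequence $x_k$ with $\va_{n_k}(x_k) \leqslant K_0$ would yield $\va_{n_k+1}(x) \leqslant h_{x_k, K_0}(x, t_0) \leqslant \max_{y,y' \in M} h_{y, K_0}(y', t_0) < +\infty$ for all $x$, contradicting pointwise divergence.

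The symmetric case $\va \geqslant T^{-}_{t_0}\va$ I would handle analogously, using the reversibility relation \eqref{2-r} and the backward implicit action function $h^{x_0, u_0}$ exactly in parallel with Step 2 of Lemma \ref{lem5-1}. The main (mild) technical obstacle is simply transcribing each of the four estimates from the continuous-time proof to the discrete one while upgrading the strict monotonicity of the semigroup to the non-strict inequalities needed here, via $\eps$-perturbations and expansiveness. The conclusion is weaker than that of Lemma \ref{lem5-1} in that $\va_\infty$ need not solve \eqref{s0}, reflecting the fact that $T^{-}_{t_0}\va_\infty = \va_\infty$ is only a single-time fixed point and hence strictly weaker than being a fixed point of the full semigroup.
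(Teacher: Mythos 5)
Your proof is correct and follows exactly the route the paper intends: the Corollary is obtained by running the argument of Lemma \ref{lem5-1} along the discrete times $\{nt_0\}$, using the Markov property of $T^-_{t_0}$ (Proposition \ref{pr-sg}(5)--(6)) together with the $\eps$-perturbation and $e^{\lambda t}$-expansiveness trick to upgrade the strict-order-preserving property of $T^-_{t_0}$ to the non-strict inequality needed, and the resulting monotonicity of $n\mapsto T^-_{nt_0}\va$ to obtain the dichotomy and (in the bounded case) equi-Lipschitz uniform convergence to a Lipschitz limit. The paper omits this derivation by declaring it a direct consequence of Lemma \ref{lem5-1}, and your final remark correctly identifies why only $\va_\infty\in\mathrm{Lip}(M)$ (rather than a solution of \eqref{s0}) can be concluded here: one only gets $T^-_{t_0}\va_\infty=\va_\infty$, a fixed point for the single time $t_0$ and not for the whole semigroup.
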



			\medskip

			\begin{proof}[Proof of Proposition \ref{thG}]
				In view of Proposition \ref{pr-sg1}, notice that $T^-_t\va\geqslant \va$ if and only if $T^+_t\va\leqslant \va$.

				If equation \eqref{s0} has a viscosity solution $\va$, then by Proposition \ref{pr1} one can deduce that $\psi:=\lim_{t\to+\infty}T^+_t\va$ is a forward weak KAM solution. Thus, $T^-_t\va=\va$ and $T^+_t\psi=\psi$ for all $t\geqslant 0$, i.e., item (2) in Proposition \ref{thG} holds true.

				Let $\va\in C(M,\R)$ and $t_1>0$ be such that $T^-_{t_1}\va\geqslant\va$. Let $\psi\in C(M,\R)$ and $t_2>0$ be such that $T^+_{t_2}\psi\geqslant\psi$.
				We aim to prove that equation \eqref{s0} has viscosity solutions.

				Since $T^-_{t_1}\va\geqslant\va$, then by Corollary \ref{co5-1},  either  $\lim_{n\to+\infty}T^{-}_{nt_1}\va(x)=+\infty$ uniformly on $x\in M$, or $\lim_{n\to+\infty}T^{-}_{nt_1}\va(x)=\va_\infty(x)$ uniformly on $x\in M$, where $\va_\infty(x)\in\mathrm{Lip}(M)$.

				\medskip
				
				\noindent {\bf Case 1}:	If $\lim_{n\to+\infty}T^{-}_{nt_1}\va(x)=\va_\infty(x)$ uniformly on $x\in M$, then for any $s\in[0,t_1]$, $\lim_{n\to\infty}T^-_{nt_1+s}\va(x)=T^-_s\va_\infty(x)$.
				Hence, $T^{-}_{t}\va(x)$ is bounded on $M\times[0,+\infty)$. Thus, by Proposition \ref{pr-v},
				\[
				\va'(x):=\liminf_{t\to+\infty}T^{-}_{t}\va(x)
				\]
				is a viscosity solution of equation \eqref{s0}.

				\medskip
				
				\noindent {\bf Case 2}:	If $\lim_{n\to+\infty}T^{-}_{nt_1}\va(x)=+\infty$ uniformly on $x\in M$, then there is $n_1\in\mathbb{N}$ such that $T^-_{n_1t_1}\va>\psi$, and $T^-_{n_1t_1}\va>\va$. Choose $k_1$, $k_2\in\mathbb{N}$ such that
				\[
				s_0:=\frac{k_1}{k_2}t_2-n_1t_1>0
				\]
				small enough with
				\[
				T^-_{n_1t_1+s_0}\va>\psi,\quad T^-_{n_1t_1+s_0}\va>\va.
				\]
				Let $t_0:=k_2(n_1t_1+s_0)=k_1t_2$. Then
				\[
				T^-_{t_0}\va>\va,\quad T^-_{t_0}\va>\psi,\quad T^+_{t_0}\psi\geqslant\psi.
				\]
				Then by Proposition \ref{pr3.3},
				\[
				T^+_{t_0}\va\leqslant\va.
				\]
				Let $\va'=T^-_{t_0}\va$. Then by Proposition \ref{pr-sg1}, we have that
				\[
				T^+_{t_0}\va'=T^+_{t_0}\circ T^-_{t_0}\va\leqslant \va,
				\]
				and
				\[
				T^+_{t_0}\va'\geqslant T^+_{t_0}\psi\geqslant\psi.
				\]
				Therefore, we get that
				\[
				\psi\leqslant T^+_{nt_0}\psi\leqslant T^+_{nt_0}\va'\leqslant \va.
				\]
				So, $\{T^+_{nt_0}\psi\}_{n\in \mathbb{N}}$ is bounded, and thus the uniform limit
				\[
				\lim_{n\to\infty}T^+_{nt_0}\psi=:\psi_\infty
				\]
				exists.
				And for any $s\in[0,t_0]$,
				\[
				\lim_{n\to\infty}T^+_{nt_0+s}\psi(x)=:T^+_{s}\psi_\infty(x),\quad x\in M.
				\]
				It follows that the function $(x,t)\mapsto T^+_{t}\psi(x)$ is bounded on $M\times[0,+\infty)$. Let
				\[
				\psi'_\infty(x):=\limsup_{t\to+\infty}T^+_{t}\psi(x).
				\]
				We assert that $\psi'_\infty$ is a forward weak KAM  solution of equation \eqref{s0}. If the assertion is true, then by Proposition \ref{pr1}, one can deduce that $\mathcal{S}_-\ne\emptyset$.

				So, it suffices to show the assertion. Let $K_3>0$ be such that $|T^+_{t}\psi(x)|\leqslant K_3$ for all $(x,t)\in M\times[0,+\infty)$.
				First we show that $\{T^+_t\psi(x)\}_{t>1}$ is equi-Lipschitz on $M$.
				Denote by $\kappa_2>0$ a Lipschitz constant of the function $(x_0,u_0,x)\mapsto h^{x_0,u_0}(x,1)$ on $M\times [-K_3,K_3]\times M$. From Proposition \ref{pr-sg} (6)(ii), we have
				\[
				|T^+_t\psi(x)-T^+_t\psi(y)|\leqslant \sup_{z\in M}|h^{z,T^+_{t-1}\psi(z)}(x,1)-h^{z,T^+_{t-1}\psi(z)}(y,1)|
				\]
				for all $x$, $y\in M$, and all $t>1$. The above inequality implies that
				\[
				|T^+_t\psi(x)-T^+_t\psi(y)|\leqslant \kappa_2 \cdot d(x,y).
				\]

				Next we show that $\psi'_\infty$ is a fixed point of $\{T^+_t\}_{t\geqslant 0}$.
				Since $\{T^+_t\psi(x)\}_{t>1}$ is equi-Lipschitz on $M$, it is easy to see that
				\begin{align*}\label{3-30}
					\lim_{t\to+\infty}\sup_{s\geqslant t}T^+_s\psi(x)=\psi'_\infty(x)\quad \text{uniformly on}\  x\in M.	
				\end{align*}
				
				For any $s\geqslant 0$, notice that
				\begin{align*}
					\psi'_\infty(x)&=\limsup_{t\to+\infty}T^+_t\psi(x)=\limsup_{t\to+\infty}T^+_s\circ T^+_t\psi(x)\\
					&=\lim_{m\to+\infty}\lim_{n\to+\infty}\max_{m\leqslant t\leqslant n}T^+_s\circ T^+_t\psi(x)\\
					&=\lim_{m\to+\infty}\lim_{n\to+\infty}\max_{m\leqslant t\leqslant n}\sup_{y\in M}h^{y,T^+_t\psi(y)}(x,s)\\
					&=\lim_{m\to+\infty}\lim_{n\to+\infty}\sup_{y\in M}h^{y,\max_{m\leqslant t\leqslant n}T^+_t\psi(y)}(x,s)\\
					&=\lim_{m\to+\infty}\lim_{n\to+\infty}T^+_s(\max_{m\leqslant t\leqslant n}T^+_t\psi)(x)\\
					&=T^+_s(\lim_{m\to+\infty}\lim_{n\to+\infty}\max_{m\leqslant t\leqslant n}T^+_t\psi)(x)\\
					&=T^+_s\psi'_\infty(x).
				\end{align*}	
				The proof is complete.


			\end{proof}

			\subsection{A key proposition}
			In order to complete the proof of Theorem \ref{thA}, we need to prove a technical proposition here which also  provides certain information of the long time behavior of the viscosity solution of the Cauchy problem
		\begin{equation}\label{96-1}
			w_{t}(x,t)+H(x, w(x,t), Dw(x,t))=c
		\end{equation}
			 with $w(x,0)=\va(x)$.
			Moreover,  we will use this proposition again in the proof of Theorem \ref{thC}.
			Recall that the function $(x,t)\mapsto T^{-,c}_t\va(x)$ is the unique viscosity solution of the Cauchy problem.
			\begin{prop}\label{thD} Let $\va\in C(M,\R)$ and let $c\in\R$.
				\begin{itemize}
					\item [(1)] if there is $t_0>0$ such that $T^{-,c}_{t_0}\va\geqslant \va$ (resp. $T^{-,c}_{t_0}\va\leqslant \va$), then for any $s\in[0,t_0]$, $\lim_{n\to+\infty}T^{-,c}_{nt_0+s}\va(x)=+\infty$ (resp. $\lim_{n\to+\infty}T^{-,c}_{nt_0+s}\va(x)=-\infty$) uniformly on $x\in M$, or $\lim_{n\to+\infty}T^{-,c}_{nt_0+s}\va(x)=u(x,s)$ uniformly in $(x,s)\in M\times [0,t_0]$, where $u(x,s)$ is a viscosity solution of equation \eqref{96-1} which is $t_0$-periodic in time.
					\item [(2)] if there is $t_0>0$ such that $T^{-,c}_{t_0}\va> \va$ (resp. $T^{-,c}_{t_0}\va<\va$), then $\lim_{t\to+\infty}T^{-,c}_{t}\va(x)=+\infty$ (resp. $\lim_{t\to+\infty}T^{-,c}_{t}\va(x)=-\infty$) uniformly on $x\in M$, or $\lim_{t\to+\infty}T^{-,c}_{t}\va(x)=\va_\infty(x)$ uniformly on $x\in M$, where $\va_\infty$ is a viscosity solution of equation \eqref{shjc}.
					\item [(3)] if for any $t>0$,
					\[
					\Big\{x\in M: T^{-,c}_t\va(x)=\va(x)\Big\}\neq \emptyset,
					\]
					then $|T^{-,c}_t\va(x)|\leqslant K_\va$ for all $(x,t)\in M\times[0,+\infty)$ and for some constant $K_\va>0$ depending on $\va$.
				\end{itemize}
			\end{prop}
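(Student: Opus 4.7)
My plan is to establish the three parts in sequence, with part (1) carrying the main technical weight, and parts (2) and (3) being refinements that use, respectively, the strict inequality and a no-escape observation.

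For part (1) I treat the case $T^{-,c}_{t_0}\varphi \geq \varphi$; the other case is symmetric. Iterating the strict monotonicity of the semigroup (Proposition \ref{pr-sg}(1)) yields that $\{T^{-,c}_{n t_0}\varphi\}_{n\in\mathbb{N}}$ is non-decreasing, so its pointwise limit exists in $(-\infty,+\infty]$. To get the dichotomy between $+\infty$ uniformly and finite uniformly, I adapt the argument of Lemma \ref{lem5-1}: if the limit is finite at a single point $x_0$, then the Markov step $T^{-,c}_{(n+1)t_0}\varphi(x) \leq h^c_{x_0, T^{-,c}_{n t_0}\varphi(x_0)}(x, t_0)$ combined with the Lipschitz continuity of implicit action functions on bounded sets (Proposition \ref{pr-af}(2)) forces uniform boundedness above, while $\varphi \leq T^{-,c}_{n t_0}\varphi$ gives the lower bound. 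Uniform boundedness then yields equi-Lipschitz estimates via the argument used at the start of the proof of Proposition \ref{pr-v}, hence uniform convergence $T^{-,c}_{n t_0}\varphi \to u$. Passing $T^{-,c}_{t_0}$ through the limit using $e^{\lambda t}$-expansiveness (Proposition \ref{pr-sg}(3)) gives $T^{-,c}_{t_0}u = u$. Setting $u(x,s) := T^{-,c}_s u(x)$ for $s \in [0,t_0]$ and applying expansiveness once more upgrades the convergence to uniform convergence of $T^{-,c}_{n t_0+s}\varphi \to u(\cdot,s)$ on $M \times [0,t_0]$; $t_0$-periodicity in $s$ follows from $u(\cdot,t_0) = T^{-,c}_{t_0}u = u = u(\cdot,0)$, and $u(x,s)$ solves \eqref{96-1} by construction.

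For part (2), I first apply part (1) to the weaker hypothesis $T^{-,c}_{t_0}\varphi \geq \varphi$ to reach the dichotomy. In the finite case, I must show the $t_0$-periodic limit $u(x,s)$ is constant in $s$, so that $\varphi_\infty := u(\cdot,0)$ is a viscosity solution of \eqref{shjc}. The idea is to exploit strictness: since $T^{-,c}_{t_0}\varphi > \varphi$ on the compact $M$, there is a positive gap, and by continuity of $(x,t) \mapsto T^{-,c}_t\varphi(x)$ the strict inequality persists on an open neighborhood $U$ of $t_0$. Choosing $t_0' \in U$ with $t_0'/t_0$ irrational and applying part (1) to $t_0'$ yields a $t_0'$-periodic limit; the common orbit in the $\omega$-limit set is continuous in $s$ and simultaneously $t_0$- and $t_0'$-periodic, so it must be constant in $s$. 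The resulting $\varphi_\infty$ is then a fixed point of every $T^{-,c}_s$ by passing the expansiveness through the limit, hence a viscosity solution of \eqref{shjc}. For part (3), the hypothesis furnishes for every $t>0$ some $x_t$ with $T^{-,c}_t\varphi(x_t) = \varphi(x_t) \in [-\|\varphi\|_\infty,\|\varphi\|_\infty]$. Setting $M_t := \max_x T^{-,c}_t\varphi(x)$ and $m_t := \min_x T^{-,c}_t\varphi(x)$, I immediately get $M_t \geq -\|\varphi\|_\infty$ and $m_t \leq \|\varphi\|_\infty$. The Markov representation gives $M_{t+1} \leq \sup_x h^c_{x_t, \varphi(x_t)}(x,1) \leq C_+$, a uniform upper bound depending only on $\|\varphi\|_\infty$ (Proposition \ref{pr-af}(2)). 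For the lower bound on $m_t$, I argue by contradiction: if $m_{t_n} \to -\infty$ along a sequence $t_n \to \infty$, choosing minimizers $z_n$ and using monotonicity of $h^c$ in its second argument together with compactness of $M$ yield $M_{t_n+1} \leq \sup_x h^c_{z_n, m_{t_n}}(x,1) \to -\infty$, contradicting $M_{t_n+1} \geq -\|\varphi\|_\infty$. Combining this two-sided bound with the trivial bound on $[0,1]$ coming from Proposition \ref{pr-sg}(2) yields the desired $K_\varphi$.

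The step I expect to be the main obstacle is the constancy-in-time argument in part (2). Strict inequality $T^{-,c}_{t_0}\varphi > \varphi$ alone does not immediately rule out genuinely $t_0$-periodic non-stationary limits of the orbit; one really must exploit the strict inequality together with continuity of the semigroup in $t$ to transfer the inequality to a neighborhood of $t_0$ and produce a second incommensurate period, and then invoke a continuity-and-density argument to conclude constancy. All the other steps, while technical, follow rather standard patterns based on the monotonicity, the $e^{\lambda t}$-expansiveness, and the Lipschitz regularity of implicit action functions already established in the paper.
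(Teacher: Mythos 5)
Your proposal follows the paper's approach almost step by step in all three parts: iterated monotonicity and the $+\infty$/finite dichotomy for Part (1), the incommensurate-period trick with the density argument for Part (2), and the Markov-step sandwich for Part (3). Two small points deserve attention.

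In Part (1), your argument establishes the dichotomy for the subsequence $\{T^{-,c}_{nt_0}\varphi\}$ (either $+\infty$ uniformly or finite uniformly), and your uniform-in-$s$ upgrade via expansiveness applies only to the finite branch, where you set $u(x,s):=T^{-,c}_su(x)$. The statement, however, also asserts that in the divergent branch $T^{-,c}_{nt_0+s}\varphi\to+\infty$ for \emph{every} $s\in[0,t_0]$, not only for $s=0$. That step is not automatic, since applying $T^{-,c}_s$ does not obviously send a sequence diverging to $+\infty$ to another one. The paper handles this by contradiction: if some $T^{-,c}_{nt_0+s_0}\varphi$ had a finite uniform limit $\varphi^{s_0}_\infty$, then $e^{\lambda t}$-expansiveness would force $T^{-,c}_{(n+1)t_0}\varphi=T^{-,c}_{t_0-s_0}\circ T^{-,c}_{nt_0+s_0}\varphi\to T^{-,c}_{t_0-s_0}\varphi^{s_0}_\infty$, which is finite and contradicts the divergence along $nt_0$. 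You should add this argument explicitly.

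In Part (3), your lower bound on $m_t$ proceeds by contradiction and leans on the claim that $\sup_x h^c_{z,u_0}(x,1)\to-\infty$ as $u_0\to-\infty$ uniformly in $z$. That claim is true but is not one of the listed properties of the implicit action functions, and it requires a short separate proof (for instance, via reversibility: if $v_n:=h^c_{z_n,u_n}(x_n,1)$ stayed bounded, then $u_n=h_c^{x_n,v_n}(z_n,1)$ would be bounded by Lipschitz continuity on a compact set, contradicting $u_n\to-\infty$). The paper sidesteps this entirely by going directly to the backward implicit action function: from $h^c_{y,T^{-,c}_t\varphi(y)}(x_{t+1},1)\geqslant T^{-,c}_{t+1}\varphi(x_{t+1})=\varphi(x_{t+1})$ and the relation \eqref{2-r} together with monotonicity in the second argument of $h_c^{\cdot,\cdot}$, one gets $T^{-,c}_t\varphi(y)\geqslant h_c^{x_{t+1},\varphi(x_{t+1})}(y,1)\geqslant \min_{z,z'}h_c^{z,\varphi(z)}(z',1)$ immediately. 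Your version works after the auxiliary lemma is supplied, but the paper's direct route is shorter and stays within the stated toolkit.
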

			\medskip

			\begin{proof}[Proof of Proposition \ref{thD}]
				We will split the proof into three steps.
				
				\medskip
				
				\noindent {\bf Step 1}:	
				if there is $t_0>0$ such that $\va\leqslant T^{-,c}_{t_0}\va$, then by Corollary \ref{co5-1}, either  $\lim_{n\to+\infty}T^{-,c}_{nt_0}\va(x)=+\infty$  uniformly on $x\in M$, or,  $\lim_{n\to+\infty}T^{-,c}_{nt_0}\va(x)=\va_\infty(x)$ uniformly on $x\in M$, where $\va_\infty(x)$ is a Lipschitz continuous function on $M$.

				\noindent {\it Case (i) }:	If $\lim_{n\to+\infty}T^{-,c}_{nt_0}\va(x)=+\infty$ uniformly on $x\in M$, then we assert that for any $s\in[0,t_0]$,
				\[
				\lim_{n\to+\infty}T^{-,c}_{nt_0+s}\va(x)=+\infty
				\]
				for all $x\in M$. Note that if there are $s_0\in[0,t_0]$ and $x_0\in M$ such that
				\[
				\lim_{n\to\infty}T^{-,c}_{nt_0+s_0}\va(x_0)=A''<+\infty,
				\]
				then for all $x\in M$,
				\[
				T^{-,c}_{nt_0+s_0}\va(x)\leqslant h^c_{x_0,A''}(x,t_0)\leqslant\max_{y,y''\in M}h^c_{y,A''}(y'',t_0)<+\infty.
				\]
				So, in order to show the above assertion, we can assume  by contradiction that there is $s_0\in[0,t_0]$ such that
				\[
				\lim_{n\to\infty}T^{-,c}_{nt_0+s_0}\va(x)=\va^{s_0}_\infty(x),\quad \forall x\in M,
				\]
				where $\va^{s_0}_\infty(x)$ is a function defined on $M$.
				It is clear that $\{|T^{-,c}_{nt_0+s_0}\va(x)|\}_n$ is bounded by a constant $K>0$. And thus, by similar arguments used in the proof of Proposition \ref{pr-v}, $\{|T^{-,c}_{nt_0+s_0}\va(x)|\}_n$ is equi-Lipschitz. Therefore,
				\[
				\lim_{n\to\infty}T^{-,c}_{nt_0+s_0}\va(x)=\va^{s_0}_\infty(x),
				\]
				uniformly on $x\in M$, and $\va^{s_0}_\infty\in\mathrm{Lip}(M)$. Note that
				\[
				\|T^{-,c}_{t_0-s_0}\circ T^{-,c}_{nt_0+s_0}\va-T^{-,c}_{t_0-s_0}\va^{s_0}_\infty\|_\infty\leqslant e^{\lambda t_0}\|T^{-,c}_{nt_0+s_0}\va-\va^{s_0}_\infty\|_\infty.
				\]
				Thus, we get that
				\[
				+\infty=\lim_{n\to\infty}T^{-,c}_{t_0-s_0}\circ T^{-,c}_{nt_0+s_0}\va(x)=T^{-,c}_{t_0-s_0}\va^{s_0}_\infty(x),
				\]
				a contradiction. We have proved the assertion.

				Next, we prove for any $s\in[0,t_0]$,
				$\lim_{t\to+\infty}T^{-,c}_{nt_0+s}\varphi(x)=+\infty$ uniformly on $x\in M$.  Suppose not. Then there are $s'\in[0,s_0]$, $K_0>0$, $\{n_k\}\nearrow+\infty$ and $x_k\in M$, such that
				\[
				T^{-,c}_{n_kt_0+s'}\va(x_k)\leqslant K_0.
				\]
				Then for any $k\in \mathbb{N}$, any $x\in M$
				\[
				T^{-,c}_{(n_k+1)t_0+s'}\va(x)\leqslant h^c_{x_k,T^{-,c}_{n_kt_0+s'}\va(x_{n_k})}(x,t_0)\leqslant h^c_{x_k,K_0}(x,t_0)\leqslant \max_{y',y''\in M}h^c_{y',K_0}(y'',t_0)<+\infty,
				\]
				which contradicts $\lim_{n\to\infty}T^{-,c}_{nt_0+s'}\varphi(x)=+\infty$ for all $x\in M$.

				%
				\noindent {\it Case (ii) }:	
				If $\lim_{n\to+\infty}T^{-,c}_{nt_0}\va(x)=\va_\infty(x)$ for all $x\in M$, where $\va_\infty(x)$ is a Lipschitz continuous function on $M$, then for any $s\in\R$,
				\[
				\lim_{n\to+\infty}T^{-,c}_{nt_0+s}\va(x)=T^{-,c}_s\va_\infty(x)=:u(x,s).
				\]
				It is clear that $u(x,s+t_0)=u(x,s)$ for all $s\in\R$, and that $u(x,s)$ is a viscosity solution of (1.5a).
				
				The proof for the case  $\va\geqslant T^{-,c}_{t_0}\va$ for some $t_0>0$ is quite similar and thus we omit it.
				
				\medskip
				
				\noindent {\bf Step 2}:		
				if there is $t_0>0$ such that $\va<T^{-,c}_{t_0}\va$, then there is $t_1>0$ close enough to $t_0$ such that $t_1/t_0$ is an irrational number and $\va<T^{-,c}_{t_1}\va$. Since $t_1/t_0$ is an irrational number, then for any $s\in[0,t_0]$, any $\eps>0$ and any $N\in\mathbb{N}$, there are $m_0$, $m_1\in\mathbb{N}$ with $m_0$, $m_1>N$, such that
				\begin{align}\label{5-3}
					|m_1t_1-(m_0t_0+s)|<\eps.
				\end{align}
				By the result obtained in Proposition \ref{thD} (1), then $\lim_{n\to+\infty}T^{-,c}_{nt_0+s}\va(x)=+\infty$  for all $x\in M$ and all $s\in\R$, or $\lim_{n\to+\infty}T^{-,c}_{nt_0+s}\va(x)=u(x,s)$ for all $x\in M$ and all $s\in\R$. If $\lim_{n\to+\infty}T^{-,c}_{nt_0+s}\va(x)=u(x,s)$ for all $x\in M$ and all $s\in[0,t_0]$, then in view of \eqref{5-3}, we get that
				$\lim_{n\to\infty}T^{-,c}_{nt_1}\va(x)=:\va'_\infty(x)$ for some $\va'_\infty\in \mathrm{Lip}(M)$. By \eqref{5-3} again, $\va'_\infty(x)=u(x,s)$ for all $x\in M$ and $s\in[0,t_0]$. Thus, $\va'_\infty$ is a viscosity solution of \eqref{shjc}.
				
				The proof for the case  $\va\geqslant T^{-,c}_{t_0}\va$ for some $t_0>0$ is quite similar and thus we omit it.

				\medskip
				
				\noindent {\bf Step 3}:	
				if for any $t>0$, there are $x_1$, $x_2\in M$ such that $T^{-,c}_{t}\va(x_1)>\va(x_1)$ and $T^{-,c}_{t}\va(x_2)<\va(x_2)$, then for any $t>0$, there is $x_t\in M$ such that $T^{-,c}_{t}\va(x_t)=\va(x_t)$. Note that
				\[
				T^{-,c}_{t}\va(x)=T^{-,c}_{1}\circ T^{-,c}_{t-1}\va(x)\leqslant h_{x_{t-1},T^{-,c}_{t-1}\va(x_{t-1})}(x,1)= h_{x_{t-1},\va(x_{t-1})}(x,1).
				\]
				Thus, $T^{-,c}_{t}\va(x)$ is bounded from above. Note that for any $y\in M$,
				\[
				h^c_{y,T^{-,c}_t\va(y)}(x_{t+1},1)\geqslant T^{-,c}_{t+1}\va(x_{t+1})=\va(x_{t+1}),
				\]
				which implies that $T^{-,c}_t\va(y)\geqslant h_c^{x_{t+1},\va(x_{t+1})}(y,1)$ for all $y\in M$. Therefore, we get that for any $t>1$, any $y\in M$,
				\[
				\min_{(z,z')\in M\times M}h_c^{z,\va(z)}(z',1)	\leqslant h_c^{x_{t+1},\va(x_{t+1})}(y,1)\leqslant T^{-,c}_t\va(y)\leqslant\max_{(z,z')\in M\times M} h^c_{z,\va(z)}(z',1).
				\]
				Hence, $|T^{-,c}_t\va(y)|$ is bounded on $M\times[0,+\infty)$.
			\end{proof}

			\subsection{Necessary and sufficient conditions for the existence  II}
			We prove the following result in this part.
			\begin{prop}\label{thH}
				Let $c\in\R$. The following  statements are equivalent.
				\begin{itemize}
					\item [(1)] Equation \eqref{shjc} has viscosity solutions;
					\item [(2)] There exist $\varphi$, $\psi\in C(M,\R)$ and $t_1$, $t_2\in \R_+$ such that $T^{-,c}_{t_1}\varphi\geqslant \varphi$, $T^{-,c}_{t_2}\psi\leqslant \psi$;
					\item [(3)] There exist $\varphi$, $\psi\in C(M,\R)$ such that $T^{-,c}_{t}\varphi$ is bounded from below and $T^{-,c}_{t}\psi$ is bounded from above on $M\times[0,+\infty)$.
				\end{itemize}
			\end{prop}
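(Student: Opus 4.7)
The plan is to establish $(1) \Leftrightarrow (2)$ by reduction to Proposition \ref{thG} via Proposition \ref{pr3.3}, deduce $(1) \Rightarrow (3)$ trivially and $(2) \Rightarrow (3)$ by iterating the semigroup, and close the loop through the substantive implication $(3) \Rightarrow (2)$ via a case analysis based on Proposition \ref{thD}.

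For $(1) \Leftrightarrow (2)$, I would apply Proposition \ref{pr3.3} with $v = u = \varphi$ and $v = u = \psi$ to obtain $T^{-,c}_{t_1}\varphi \geqslant \varphi \iff T^{+,c}_{t_1}\varphi \leqslant \varphi$ and $T^{-,c}_{t_2}\psi \leqslant \psi \iff T^{+,c}_{t_2}\psi \geqslant \psi$; statement $(2)$ of the present proposition is then verbatim statement $(2)$ of Proposition \ref{thG}, so the equivalence with $(1)$ carries over. For $(2) \Rightarrow (3)$, the monotonicity of $T^{-,c}_s$ (Proposition \ref{pr-sg}(1)) and induction on the semigroup property give $T^{-,c}_{nt_1+s}\varphi \geqslant T^{-,c}_s\varphi$ for all $n \in \mathbb{N}$ and $s \in [0, t_1]$; compactness of $M \times [0, t_1]$ bounds the right-hand side, and the analogous iteration for $\psi$ yields the upper bound. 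The implication $(1) \Rightarrow (3)$ is immediate on taking $\varphi = \psi = u$ and using $T^{-,c}_t u = u$.

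The heart of the proof is $(3) \Rightarrow (2)$. Assuming $T^{-,c}_t \varphi$ is bounded from below, I would split into three scenarios according to the sign of $T^{-,c}_t\varphi - \varphi$ and invoke Proposition \ref{thD}. If for every $t > 0$ there is some $x_t \in M$ with $T^{-,c}_t\varphi(x_t) = \varphi(x_t)$, then Proposition \ref{thD}(3) promotes the lower bound to full boundedness on $M \times [0, \infty)$, and an analogue of Proposition \ref{pr-v} for the Lagrangian $L+c$ produces a viscosity solution $\liminf_{t \to \infty} T^{-,c}_t\varphi$ of \eqref{shjc}. Otherwise there exists $t_0 > 0$ with $T^{-,c}_{t_0}\varphi > \varphi$ or $T^{-,c}_{t_0}\varphi < \varphi$ strictly on $M$; Proposition \ref{thD}(2) then either produces a viscosity solution directly or forces uniform divergence to $\pm\infty$. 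The $-\infty$ branch is ruled out by the lower bound, and the $+\infty$ branch yields $T^{-,c}_T\varphi \geqslant \max_M \varphi \geqslant \varphi$ for $T$ large, supplying the first half of $(2)$ with $\varphi$ itself. A mirror-image trichotomy on $\psi$ gives either a viscosity solution or the second half of $(2)$, and combining outcomes establishes $(2)$ in all cases.

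The main obstacle I expect is the analogue of Proposition \ref{pr-v} for the shifted semigroup $\{T^{-,c}_t\}$: one must transfer the equi-Lipschitz argument to $L + c$, which uses the Lipschitz estimate on $h^c_{x_0, u_0}(x, 1)$ (Proposition \ref{pr-af}(2) applied to $L+c$), and then verify that $\liminf_{t\to\infty} T^{-,c}_t\varphi$ is preserved by $T^{-,c}_s$. Both ingredients extend without essential change since the results of Section 2 apply uniformly to any Lagrangian of the form $L + c$, but organising the trichotomy cleanly and confirming the ``bounded both ways versus uniform divergence'' dichotomy in the one-sided-bound setting is where most of the bookkeeping lives.
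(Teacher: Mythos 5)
Your proof is correct, and it departs from the paper's in a meaningful way. The paper proves $(2)\Leftrightarrow(3)$ and then $(1)\Leftrightarrow(3)$; the implication $(3)\Rightarrow(1)$ is the one requiring new work there, and the paper handles the residual case (where $\lim_{t\to\infty}T^{-,c}_t\varphi=+\infty$ and $\lim_{t\to\infty}T^{-,c}_t\psi=-\infty$) by introducing the convex interpolation $u_\rho=\rho\varphi+(1-\rho)\psi$, defining $\rho_0=\inf\{\rho:\lim_t T^{-,c}_t u_\rho=+\infty\}$, and running a dichotomy at $\rho_0$ to manufacture a viscosity solution. You bypass this entirely: your observation that Proposition \ref{pr3.3} with $u=v$ makes statement $(2)$ of Proposition \ref{thH} literally coincide with statement $(2)$ of Proposition \ref{thG} lets you import $(2)\Rightarrow(1)$ wholesale, so that $(3)\Rightarrow(2)\Rightarrow(1)$ closes the loop with no interpolation step. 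In effect, the hard work your route offloads onto Proposition \ref{thG} is that proposition's common-period sandwiching argument ($t_0=k_2(n_1 t_1+s_0)=k_1 t_2$), whereas the paper redoes comparable work inside Proposition \ref{thH} via the $u_\rho$ family. Your $(3)\Rightarrow(2)$ trichotomy tracks the paper's, organised slightly differently (you branch on the sign of $T^{-,c}_{t_0}\varphi-\varphi$; the paper branches first on whether $T^{-,c}_t\varphi$ is bounded above and then invokes Proposition \ref{thD}(3) in contrapositive form), and your $(2)\Rightarrow(3)$ by iterating monotonicity is a sound alternative to the paper's direct citation of Proposition \ref{thD}(1). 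One small imprecision: Proposition \ref{thD}(3) already gives two-sided boundedness from the crossing hypothesis alone, so it does not so much ``promote the lower bound'' as supersede it; this is cosmetic and does not affect the argument.
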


			\begin{proof}
				Without any loss of generality, we assume that $c=0$. The strategy of our proof: we will prove the equivalence of items (2) and (3), and then the equivalence of items (3) and (1).

				\medskip
				
				\noindent {\bf Step 1}:	we first show (2)$\Leftrightarrow$(3).
				If condition (2) holds true, then in view of Proposition \ref{thD} (1), one can deduce that $T^{-}_{t}\varphi$ is bounded from below and $T^{-}_{t}\psi$ is bounded from above on $M\times[0,+\infty)$.
				
				In the rest of Step 1, we show that (3)$\Rightarrow$(2). Let $\varphi$, $\psi\in C(M,\R)$ be such that $T^{-}_{t}\varphi$ is bounded from below and $T^{-}_{t}\psi$ is bounded from above on $M\times[0,+\infty)$.
				
				If $T^{-}_{t}\varphi$ is also bounded from above, then by Proposition \ref{pr-v}, $\va_\infty(x)=\liminf_{t\to+\infty}\va(x)$ is a viscosity solution of \eqref{s0} and thus $\va_\infty=T^-_t\va_\infty$ for all $t\geqslant 0$. If $T^{-}_{t}\varphi$ is  unbounded from above, then from Proposition \ref{thD} (3) we can get that there is $t'>0$ such that
				\[
				\{x\in M: T^-_{t'}\va(x)=\va(x)\}=\emptyset.
				\]
				Thus, one can deduce that either $T^-_{t'}\va>\va$ or 	$T^-_{t'}\va<\va$. Since $T^{-}_{t}\varphi$ is bounded from below, by Proposition \ref{thD} (2) again we get that
				\[
				\lim_{t\to+\infty}T^-_t\va(x)=+\infty
				\]
				uniformly in $x\in M$. Hence, there is $t_1>0$ such that $T^-_{t_1}\va\geqslant\va$.

				If $T^{-}_{t}\psi$ is also bounded from below, then by Proposition \ref{pr-v}, $\psi_\infty(x)=\liminf_{t\to+\infty}\psi(x)$ is a viscosity solution of \eqref{s0} and thus $\psi_\infty=T^-_t\psi_\infty$ for all $t\geqslant 0$. If $T^{-}_{t}\psi$ is  unbounded from below, then from Proposition \ref{thD} (3) we can get that there is $t''>0$ such that
				\[
				\{x\in M: T^-_{t''}\psi(x)=\psi(x)\}=\emptyset.
				\]
				Thus, one can deduce that either $T^-_{t''}\psi>\psi$ or 	$T^-_{t''}\psi<\psi$. Since $T^{-}_{t}\psi$ is bounded from above, by Proposition \ref{thD} (2) again we get that
				\[
				\lim_{t\to+\infty}T^-_t\psi(x)=-\infty
				\]
				uniformly in $x\in M$. Hence, there is $t_2>0$ such that $T^-_{t_2}\psi\leqslant\psi$.

				\noindent {\bf Step 2}:	next  we show  (1)$\Leftrightarrow$(3).
				The fact that (1)$\Rightarrow$(3) is clear. It suffices to show that (3)$\Rightarrow$(1).
				
				From the proof of (3)$\Rightarrow$(2), we only need to discuss the case:
				\[
				\lim_{t\to+\infty}T^-_t\va(x)=+\infty,\quad \text{and}\quad  \lim_{t\to+\infty}T^-_t\psi(x)=-\infty.
				\]
				Let $u_\rho=\rho \va+(1-\rho)\psi$, $\rho\in[0,1]$ and let
				\[
				\rho_0=\inf\{\rho: \lim_{t\to+\infty}T^-_tu_\rho(x)=+\infty\}.
				\]
				Consider $T^-_tu_{\rho_0}$. If $T^-_tu_{\rho_0}$ is bounded on $M\times[0,+\infty)$, then by Proposition \ref{pr-v} again,  $\liminf_{t\to+\infty}T^-_tu_{\rho_0}$ is a viscosity solution of \eqref{s0}. If $T^-_tu_{\rho_0}$ is unbounded on $M\times[0,+\infty)$, then by similar arguments used in Step 1, one can deduce that either  $\lim_{t\to+\infty}T^-_tu_{\rho_0}=+\infty$ uniformly in $x\in M$ or $\lim_{t\to+\infty}T^-_tu_{\rho_0}=-\infty$ uniformly in $x\in M$.

				If $\lim_{t\to+\infty}T^-_tu_{\rho_0}=+\infty$ uniformly in $x\in M$, then there is $t_0>0$ such that $T^-_{t_0}u_{\rho_0}>u_{\rho_0}$. Thus, there is $\epsilon_0>0$ such that $T^-_{t_0}u_{\rho_0-\epsilon_0}>u_{\rho_0-\epsilon_0}$. Then by Proposition \ref{thD} (2) we get that either $\lim_{t\to+\infty}T^-_{t}u_{\rho_0-\epsilon_0}=u_\infty$ uniformly in $x\in M$, or $\lim_{t\to+\infty}T^-_{t}u_{\rho_0-\epsilon_0}=+\infty$ uniformly in $x\in M$. In view of the definition of $\rho_0$, we deduce that $\lim_{t\to+\infty}T^-_{t}u_{\rho_0-\epsilon_0}=u_\infty$ uniformly in $x\in M$ and $u_\infty$ is a viscosity solution of \eqref{s0}.

				If $\lim_{t\to+\infty}T^-_tu_{\rho_0}=-\infty$ uniformly in $x\in M$, then there is $t_0'>0$ such that $T^-_{t_0'}u_{\rho_0}<u_{\rho_0}$. Thus, there exists $\delta>0$ such that
				\[
				T^-_{t_0'}u_{\rho_0+\epsilon}<u_{\rho_0+\epsilon}
				\]
				for all $\epsilon\in(0,\delta)$. Then for any $\epsilon\in(0,\delta)$, either  $\lim_{t\to+\infty}T^-_tu_{\rho_0+\epsilon}=-\infty$ uniformly in $x\in M$ or $\lim_{t\to+\infty}T^-_tu_{\rho_0+\epsilon}=u'_\infty$ uniformly in $x\in M$, where $u'_\infty$ is a viscosity solution of \eqref{s0}. Recall that $\lim_{t\to+\infty}T^-_tu_{\rho_0}=-\infty$ uniformly in $x\in M$. Hence, by the definition of $\rho_0$ there must be $\epsilon_0\in(0,\delta)$ such that
				$\lim_{t\to+\infty}T^-_tu_{\rho_0+{\epsilon_0}}=u''_\infty$ uniformly in $x\in M$, where $u''_\infty$ is a viscosity solution of \eqref{s0}.
			\end{proof}

%


			\section{Analysis of the admissible set: min-max and max-min formulas }
			
			\subsection{The admissible set is an interval}
			Before proving Theorem \ref{thC}, we recall a approximation and regularity result of Lipschitz functions by Czarnecki and Rifford \cite{CR}.

			\begin{prop}\label{Ri}
				Let $f\in\mathrm{Lip}(M)$. Then there exists a sequence $\{f_{n}\}_{n \in \mathbb{N}}$ in $C^\infty(M,\R)$ such that
				$$
				\lim _{n \rightarrow+\infty}\left\|f_{n}-f\right\|_{\infty}=0
				$$
				and
				$$
				\lim _{n \rightarrow+\infty} d_{\mathrm{Haus }}\left(\operatorname{graph}\left(D f_{n}\right), \operatorname{graph}(\partial f)\right)=0.
				$$	
			\end{prop}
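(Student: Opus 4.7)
The plan is to combine standard mollification in local charts with a careful analysis of how the resulting smooth gradients relate to the Clarke subdifferential $\partial f$. First I would fix a finite atlas $\{(U_i,\varphi_i)\}_{i=1}^N$ of $M$ together with a smooth partition of unity $\{\chi_i\}$ subordinate to it. Writing $f=\sum_i \chi_i f$, it suffices to regularize each piece $\chi_i f$ inside the chart $U_i$, since the gradient of a finite sum is the sum of gradients and the Clarke subdifferential behaves well under this localization (the relevant estimates are linear in the bump functions $\chi_i$, and a diagonal argument then reassembles a global $f_n$).

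Next I would perform a standard Euclidean mollification in each chart: pull $\chi_i f$ back to $\R^n$, convolve with a symmetric smooth mollifier $\rho_\eps$ of support in $B_\eps(0)$, and push the result forward to $M$. Summing over $i$ gives a candidate $f_\eps\in C^\infty(M,\R)$. Uniform convergence $\|f_\eps-f\|_\infty\to 0$ follows at once from the Lipschitz estimate for $f$ and the standard property $\|f\ast\rho_\eps-f\|_\infty \leqslant L\eps$. The outputs are uniformly Lipschitz (with constant comparable to the Lipschitz constant of $f$), so the graphs $\mathrm{graph}(Df_\eps)$ live in a fixed compact subset of $T^*M$, which is essential for Hausdorff convergence to even make sense.

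The substantive part is the Hausdorff convergence of gradient graphs, which splits into two inclusions. The first, \emph{upper} inclusion — any cluster point of $(x_\eps,Df_\eps(x_\eps))$ as $\eps\to 0^+$ lies in $\mathrm{graph}(\partial f)$ — is the easier one: by the chain rule and symmetry of the mollifier, $Df_\eps(x_\eps)$ is essentially a convex combination of values $Df(y)$ over $y\in B_\eps(x_\eps)\cap\mathrm{Dom}(Df)$ (plus lower-order terms coming from the partition of unity), and passing to a limit one lands in $\mathrm{co}\{\lim Df(y_n):y_n\to x\}=\partial f(x)$, since this is the very definition of the Clarke subdifferential of a Lipschitz function (Rademacher plus convex hull). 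The main obstacle is the \emph{lower} inclusion — showing that \emph{every} $p\in\partial f(x)$ is actually reached as a limit $\lim Df_{\eps_n}(x_n)$ with $x_n\to x$. Extreme points of $\partial f(x)$ are handled by the definition (choose differentiability points of $f$ converging to $x$ with gradients tending to $p$, and locate a nearby point where $Df_\eps$ is close to the average); the difficulty is producing interior convex combinations as honest limits of single gradients.

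To overcome this last obstacle I would follow the Czarnecki–Rifford strategy: instead of naive convolution, first regularize by a Lasry–Lions double inf-sup convolution,
\[
f^{\eps,\delta}(x)=\sup_{z}\inf_{y}\Bigl\{f(y)+\tfrac{1}{2\eps}d(z,y)^2-\tfrac{1}{2\delta}d(x,z)^2\Bigr\},
\]
with $\delta\ll\eps$, which is $C^{1,1}$, uniformly Lipschitz, converges to $f$ uniformly, and whose gradient at each point is characterized by a unique proximal pair $(y_{\min},z_{\max})$; a degree/continuity argument then shows the map $x\mapsto Df^{\eps,\delta}(x)$ sweeps out a set that approximates all of $\partial f(x)$, not merely the extreme directions. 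A final convolution by $\rho_\eta$ with $\eta\ll\delta$ upgrades $C^{1,1}$ to $C^\infty$ without destroying either uniform convergence or the graph approximation. Combining a diagonal sequence $(\eps_n,\delta_n,\eta_n)\to 0$ yields the desired $\{f_n\}$, and a compactness argument in $T^*M$ converts the two one-sided approximation statements into the Hausdorff convergence asserted.
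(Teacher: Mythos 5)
The paper does not prove Proposition~\ref{Ri} at all; it quotes the result from Czarnecki and Rifford \cite{CR} (whose paper is over fifty pages and quite technical) and moves on. So there is no ``paper's own proof'' to compare against, and any proof you write is necessarily your own construction.

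Your sketch points in the right direction: you correctly isolate the difficulty as the Hausdorff convergence of gradient graphs rather than the (trivial) uniform convergence, and within that, you correctly flag that the \emph{upper} inclusion (cluster points of $\operatorname{graph}(Df_\eps)$ land in $\operatorname{graph}(\partial f)$) is routine while the \emph{lower} inclusion (every $(x,p)\in\operatorname{graph}(\partial f)$ is approached) is the real content, and that naive convolution does not deliver it. But two steps are genuinely unsupported. First, the partition-of-unity localization is not innocuous: Clarke's subdifferential only satisfies $\partial(\chi_i f)(x)\subset \chi_i(x)\,\partial f(x)+f(x)\,D\chi_i(x)$ as an inclusion (equality can fail), and $\operatorname{graph}(\partial f)$ for $f=\sum_i \chi_i f$ does not split into chart-local pieces in a way that makes your two Hausdorff estimates additive over the atlas, so the ``diagonal argument then reassembles a global $f_n$'' is not automatic. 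Second, the claim that the gradient of the Lasry--Lions regularization ``sweeps out a set that approximates all of $\partial f(x)$'' via ``a degree/continuity argument'' is exactly the hard statement you need to prove, and it is asserted rather than argued: the Lasry--Lions double convolution gives a single $C^{1,1}$ function, not a family with a free parameter at a fixed $x$, so there is no obvious degree-theoretic deformation that forces $Df^{\eps,\delta}$ to visit the interior of $\partial f(x)$. In \cite{CR} the corresponding step is a carefully designed perturbation scheme, not a soft compactness or continuity argument. As written, the proposal identifies the obstacle correctly but does not actually clear it.
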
	
			Here,  $\partial f(x)$ denotes Clarke's generalized gradient of $f$ at $x$:
			$$
			\partial f(x)=\operatorname{co}\left\{\zeta \mid \exists\left(x_{n}\right)_{n \in \mathbb{N}} \subset \operatorname{Dom}(D f), x_{n} \rightarrow x, Df(x_{n}) \rightarrow \zeta,\ n\to\infty\right\}
			$$
			which is non-empty by Rademacher's theorem. And
			$\operatorname{graph}(\partial f):=\left\{(x, p) \in T^*M \ :\  p \in \partial f(x)\right\}.
			$
			Let $S_1$ and $S_2$ be two non-empty closed subsets of $T^*M$,
			$$
			d_{\text {Haus }}\left(S_1, S_2\right):=\sup \left\{\sup _{(x,p) \in S_1} d_{S_2}(x,p), \sup _{(x,p) \in S_2} d_{S_1}(x,p)\right\}
			$$
			denotes the Hausdorff distance, where $d_{S}(x,p)=\inf _{(x',p') \in S}d((x,p),(x',p'))$.

			In view of Rademacher's theorem,  $M \backslash$ $\operatorname{Dom}(Du)$ is negligible. Since $\left\|D u(x)\right\|_{x}$ is bounded by the Lipschitz constant of $u$
			and $H$ is of class $C^3$, then the following equality is a direct consequence of Proposition \ref{Ri}. 
%
			
			\begin{align}\label{991}
				\inf_{u\in\mathrm{Lip}(M)}\sup_{x\in\mathrm{Dom}(Du)}H(x,u(x),Du(x))=\inf_{u\in\mathrm{SCL}^+(M)}\sup_{x\in\mathrm{Dom}(Du)}H(x,u(x),Du(x)).
			\end{align}
			We omit the proof.

			\medskip
			\begin{proof}[Proof of Theorem \ref{thC}]
				First we show that if $c<\x$, then equation \eqref{shjc} has no viscosity subsolutions. Assume by contradiction that there is a viscosity subsolution of equation \eqref{shjc}. By classical results on viscosity solutions, we have that $u$ is Lipschitz on $M$ and satisfies $H(x,u(x),Du(x))\leqslant c$ for a.e. $x\in M$. Thus, by \eqref{991}
				\[
				\x=\inf_{w\in\mathrm{Lip}(M)}\sup_{x\in\mathrm{Dom}(Dw)}H(x,w(x),Dw(x))\leqslant c,
				\]
				a contradiction.

				Next we show that if $c>\y$, then equation \eqref{shjc} has no viscosity solutions. If equation \eqref{shjc} admits a viscosity solution $u^*$, then by Proposition \ref{pr1} one can deduce that $v^*=\lim_{t\to+\infty}T^{+,c}_tu^*$ is a forward weak KAM solution of \eqref{shjc}. Thus, $-v^*$ is a viscosity solution of $H(x,-w(x),-Dw(x))=c$, which implies that $H(x,v^*(x),Dv^*(x))=c$ a.e. $x\in M$. Hence, in view of the semiconvexity of $v^*$ we get that
				\[
				\y= \sup_{u\in \mathrm{SCL^+}(M)}\inf_{x\in \mathrm{Dom}(Du)}H(x,u(x),Du(x))\geqslant c.
				\]

				Combining the above arguments and the non-emptiness of $\mathfrak{C}$, one have that
				
				\[
				\x\leqslant \y,\quad  \text{and}\quad \mathfrak{C}\subset [\x,\y].
				\]
				
				To finish the proof, it suffices to show that for any $c\in(\x,\y)$, equation \eqref{shjc} admits at least a viscosity solution. Since $c>\x$, then by definition there is $\va\in\mathrm{Lip}(M)$ such that $\va$ is a viscosity subsolution of \eqref{shjc}, and thus $\va\leqslant T^{-,c}_t\va$ for all $t\geqslant 0$.

				In view of Theorem \ref{thA} (4), we only need to prove that there is   $\psi\in C(M,\R)$ such that $T^{-,c}_t\psi$ is bounded from above. Since $c<\y$, then there are $\delta>0$ and $\psi\in\mathrm{SCL^+}(M)$ such that
				\[
				H(x,\psi(x),D\psi(x))-c\geqslant \delta,\quad \mathrm{a.e.}\ x\in M.
				\]
				If for any $t>0$, $\{x\in M: T^{-,c}_t\psi(x)=\psi(x)\}\neq \emptyset$, then $T^{-,c}_t\psi$ is bounded. If there is $t_0>0$ such that either $T^{-,c}_{t_0}\psi<\psi$, or $T^{-,c}_{t_0}\psi>\psi$, we only need to take care of the case $T^{-,c}_{t_0}\psi>\psi$. By Proposition \ref{thD} (2),
				either $\lim_{t\to+\infty}T^{-,c}_{t}\psi=\psi_\infty$, where $\psi_\infty$ is a viscosity solution of \eqref{shjc}, or $\lim_{t\to+\infty}T^{-,c}_{t}\psi=+\infty$. In the rest of the proof we show that the case $\lim_{t\to+\infty}T^{-,c}_{t}\psi=+\infty$ cannot happen.
				
				Assume by contradiction that $\lim_{t\to+\infty}T^{-,c}_{t}\psi=+\infty$. Thus, there are $t_1>0$ and $x_0\in M$ such that $T^{-,c}_{t_1}\psi\geqslant \psi$ for all $t\geqslant t_1$ and $T^{-,c}_{t_1}\psi(x_0)=\psi(x_0)$. Note that $\psi$ is a semiconvex function and $T^{-,c}_{t_1}\psi$ is a semiconcave function \cite[Theorem 3.2]{CS0}. Then by Lemma \ref{jc} in the Appendix, both $\psi':=T^{-,c}_{t_1}\psi$ and $\psi$ are differentiable at $x_0$, and $\psi(x_0)=\psi'(x_0)$, $D\psi(x_0)=D\psi'(x_0)$.
				Let $u_0=\psi(x_0)=\psi'(x_0)$, $p_0=D\psi(x_0)=D\psi'(x_0)$.
				Let $(x(t),u(t),p(t))$ be the solution of \eqref{c} with $(x(0),u(0),p(0))=(x_0,u_0,p_0)$ in a small neighbourhood of $0$. Hence, we have that
				\[
				u(t)=u_0+\big(\langle p_0,\dot{x}(0)\rangle_{x_0}-H(x_0,u_0,p_0)+c\big)t+\mathfrak{o}(t),
				\]
				and
				\[
				\psi(x(t))=u_0+\langle p_0,\dot{x}(0)\rangle_{x_0}t+\mathfrak{o}(t).
				\]
				Recall that
				\[
				H(x_0,u_0,p_0)-c\geqslant \delta,
				\]
				which implies that $\psi(x(t))>u(t)$ in a small enough neighbourhood of 0. Thus we have
				\[
				T^{-,c}_t\psi'(x(t))\leqslant h^c_{x_0,u_0}(x(t),t)\leqslant u(t)<\psi(x(t))
				\]
				for sufficiently small $t>0$, which contradicts that
				\[
				T^{-,c}_t\psi'\geqslant \psi,\quad \forall t\geqslant 0.
				\]

				The proof of Theorem \ref{thC} is complete.
			\end{proof}

			\subsection{Examples}
			Let us discuss several illustrative examples of contact Hamiltonians satisfying (H1)-(H3) and describe the corresponding $\x$, $\y$. We discuss genuine contact Hamiltonians in the first two examples, while a classical Hamiltonian is studied in the last example.
			The classical case can be regarded as the critical case.

			Let $h(x,p)$ denote a generic Tonelli Hamiltonian on $T^*M$ in the following.
			
		
		\begin{ex}[$\x=-\infty$, $\y=+\infty$]
			Let $H(x,u,p)=f(x)u+h(x,p)$ for all $(x,u,p)\in T^*M\times\R$, where $f$ is a smooth function on $M$.
			\begin{itemize}
				\item [(i)] If $f(x)>0$ for all $x\in M$, then for any $a<0$,
				\[
				\sup_{x\in M}(f(x)a+h(x,0))\leqslant \sup_{x\in M}(f(x)a)+\sup_{x\in M}h(x,0)= a\inf_{x\in M}f(x)+\sup_{x\in M}h(x,0).
				\]
				Letting $a\to-\infty$, we get that $\x=-\infty$.
				Similarly, for any $a>0$, we have that
				\[
				\inf_{x\in M}(f(x)a+h(x,0))\geqslant \inf_{x\in M}(f(x)a)+\inf_{x\in M}h(x,0)= a\inf_{x\in M}f(x)+\inf_{x\in M}h(x,0).
				\]
				Letting $a\to+\infty$, we get that $\y=+\infty$.
				\item [(ii)] For case $f(x)<0$ for all $x\in M$, one can get the same results in a similar manner.
			\end{itemize}
		\end{ex}
		
		\medskip
		
		\begin{ex}
			Let $H(x,u,p)=V(u)+h(x,p)$ for all $(x,u,p)\in T^*M\times\R$, where $V(u)$ is a smooth function on $\R$ and $\|V'\|_\infty\leqslant\lambda$.
			\begin{itemize}
				\item [(i)] ($\x\in\R$, $\y=+\infty$). Assume, in addition, $V$ is bounded from below and $\sup_{u\in\R}V(u)=+\infty$.
				Since $V(u)+h(x,p)$ is bounded from below, then
				\[
				\x=\inf_{u\in\mathrm{Lip}(M)}\sup_{x\in\mathrm{Dom}(Du)}V(u)+h(x,Du(x))>-\infty,
				\]
				and
				\[
				\x\leqslant V(a)+\sup_{x\in M}h(x,0),\quad \forall a\in\R.
				\]
				Thus, $\x\in\R$.  Note that,
				\[
				\sup_{a\in\R}V(a)+\inf_{x\in M}h(x,0)=+\infty.
				\]
				Thus, we get that
				\[
				\y=\sup_{u\in \mathrm{SCL^+}(M)}\inf_{x\in \mathrm{Dom}(Du)}\Big(V(u)+h(x,Du(x))\Big)=+\infty.
				\]

				\item [(ii)] ($\x=-\infty$, $\y\in\R$). Assume, in addition, $V$ is bounded from above and $\inf_{u\in\R}V(u)=-\infty$.
				\[
				\x\leqslant \inf_{a\in\R}(V(a)+\sup_{x\in M}h(x,0))= \inf_{a\in\R}V(a)+\sup_{x\in M}h(x,0)=-\infty.
				\]
				Notice that for any $u\in \mathrm{SCL^+}(M)$,
				\[
				\inf_{x\in M}(V(u(x))+h(x,Du(x)))\leqslant \sup_{x\in M}V(u(x))+\inf_{x\in M}h(x,Du(x))\leqslant\sup_{x\in M}V(u(x))+h(y,0),
				\]
				where $y$ is an arbitrary point in $M$ with $Du(y)=0$.
				Hence, we deduce that $\y\in\R$.
			\end{itemize}
		\end{ex}

		\medskip
		
		\begin{ex}[$\x=\y=0$]
			Let $H(x,u,p)=\|p\|_x^2$ for all $(x,u,p)\in T^*M\times\R$.
			For each $u\in \mathrm{SCL^+}(M)$,
			\[
			\inf_{x\in M}\|Du(x)\|_x^2=0,
			\]
			which implies that
			\[
			\y=\sup_{u\in \mathrm{SCL^+}(M)}\inf_{x\in M}\|Du(x)\|_x^2=0.
			\]
			By definition, it is direct to see that
			\[
			\x=\inf_{u\in \mathrm{SCL^+}(M)}\sup_{x\in M}\|Du(x)\|_x^2=0.
			\]
			In this example, $\mathfrak{C}=\{0\}$.
		\end{ex}

\section{Appendix}

\subsection{Proof of Proposition \ref{pr2.3}}
In order to prove the proposition \ref{pr2.3}, we provide a preliminary lemma.
\begin{lem}\label{lem2.1}
	Given any $x_0$, $x\in M$, $u_0\in\mathbb{R}$ and $t>0$, let $\gamma:[0,t]\to M$ be a minimizer of $h_{x_0,u_0}(x,t)$. Then for each $t_0\in(0,t)$, there is a unique minimizer of $h_{x_0,u_0}(\gamma(t_0),t_0)$.
\end{lem}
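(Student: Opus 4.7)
The plan is to combine three ingredients already available in the paper: the dynamic programming structure of the implicit action function, the $C^1$ regularity of minimizers from Proposition \ref{IVP}, and the uniqueness of solutions to the contact Hamiltonian system \eqref{c}.

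First I would verify existence by showing that $\gamma\big|_{[0,t_0]}$ is itself a minimizer of $h_{x_0,u_0}(\gamma(t_0),t_0)$. Setting $u(s):=h_{x_0,u_0}(\gamma(s),s)$ on $(0,t]$, the fact that $\gamma$ achieves the infimum in \eqref{2-1} forces the carried value $u(s)$ to satisfy $u(s)=u_0+\int_0^s L(\gamma(\tau),u(\tau),\dot{\gamma}(\tau))\,d\tau$. Restricting to $[0,t_0]$ exhibits $\gamma\big|_{[0,t_0]}$ as an infimum-achieving curve for $h_{x_0,u_0}(\gamma(t_0),t_0)$.

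For uniqueness, let $\eta:[0,t_0]\to M$ be any minimizer of $h_{x_0,u_0}(\gamma(t_0),t_0)$, and form the concatenation $\tilde{\gamma}(s):=\eta(s)$ for $s\in[0,t_0]$ and $\tilde{\gamma}(s):=\gamma(s)$ for $s\in[t_0,t]$. I would show $\tilde{\gamma}$ is itself a minimizer of $h_{x_0,u_0}(x,t)$ by tracking the carried function $\tilde{u}$ along $\tilde{\gamma}$: on $[0,t_0]$ it agrees with $h_{x_0,u_0}(\eta(\cdot),\cdot)$ and reaches $h_{x_0,u_0}(\gamma(t_0),t_0)=u(t_0)$, so by uniqueness of the ODE $\dot{\tilde{u}}=L(\tilde{\gamma},\tilde{u},\dot{\tilde{\gamma}})$ it coincides with $u$ on $[t_0,t]$, whence the total integral equals $h_{x_0,u_0}(x,t)-u_0$. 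By Proposition \ref{IVP}, every minimizer of $h_{x_0,u_0}(x,t)$ is of class $C^1$, so $\tilde{\gamma}$ is $C^1$ at $t_0$, which forces $\dot{\eta}(t_0)=\dot{\gamma}(t_0)$.

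To conclude, both $\eta$ and $\gamma\big|_{[0,t_0]}$ lift by Proposition \ref{IVP} to solutions of the contact Hamiltonian system \eqref{c}, and these two solutions agree at $s=t_0$: they share the same $x(t_0)=\gamma(t_0)$, the same $u(t_0)=h_{x_0,u_0}(\gamma(t_0),t_0)$, and the same momentum $p(t_0)=\frac{\partial L}{\partial \dot{x}}(\gamma(t_0),u(t_0),\dot{\gamma}(t_0))$ because their velocities match. Uniqueness of solutions to \eqref{c} then gives $\eta\equiv\gamma\big|_{[0,t_0]}$. The main obstacle I anticipate is the step showing that $\tilde{\gamma}$ is genuinely a minimizer: the integrand in \eqref{2-1} depends on the already-determined function $h_{x_0,u_0}$ rather than on a free carried variable, so the verification requires the Markov-type identity $h_{x_0,u_0}(\gamma(s),s)=h_{\gamma(t_0),h_{x_0,u_0}(\gamma(t_0),t_0)}(\gamma(s),s-t_0)$ for $s\in[t_0,t]$, which follows from Proposition \ref{pr-af}(4) and the fact that the infimum there is attained at $y=\gamma(t_0)$.
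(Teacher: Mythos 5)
Your proposal is correct and follows essentially the same route as the paper's proof: concatenate the alternative minimizer with $\gamma|_{[t_0,t]}$, verify the concatenation is a minimizer of $h_{x_0,u_0}(x,t)$, invoke the $C^1$ regularity from Proposition \ref{IVP} to match velocities at $t_0$, and finish with uniqueness of solutions to the contact Hamiltonian system \eqref{c}. The Markov-type identity you anticipate needing at the end is actually unnecessary: in the paper's (equivalent) computation one simply splits the action integral along the concatenation at $t_0$ and uses that both $\alpha$ and $\gamma|_{[0,t_0]}$ carry $u_0$ to $h_{x_0,u_0}(\gamma(t_0),t_0)$, so the total action along the concatenated curve equals $h_{x_0,u_0}(x,t)-u_0$ by a direct two-line calculation without appealing to Proposition \ref{pr-af}(4).
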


\begin{proof}
	Since $\gamma$ is a minimizer of $h_{x_0,u_0}(x,t)$, then $\gamma\big|_{[0,t_0]}$ is a minimizer $h_{x_0,u_0}(\gamma(t_0),t_0)$. If there is another minimizer $h_{x_0,u_0}(\gamma(t_0),t_0)$, denoted by $\alpha$, then we will show that $\alpha=\gamma\big|_{[0,t_0]}$. Let
	\[
	\beta(s):=\left\{\begin{array}{ll}
		\alpha(s), \quad s\in[0,t_0],\\
		\gamma(s),\quad s\in[t_0,t].
	\end{array}\right.
	\]
	Then we get
	\begin{align*}
		h_{x_0,u_0}(x,t)&=h_{x_0,u_0}(\gamma(t_0),t_0)+\int_{t_0}^tL(\gamma(s),h_{x_0,u_0}(\gamma(s),s),\dot{\gamma}(s))ds\\
		&=h_{x_0,u_0}(\alpha(t_0),t_0)+\int_{t_0}^tL(\gamma(s),h_{x_0,u_0}(\gamma(s),s),\dot{\gamma}(s))ds\\
		&=u_0+\int_0^{t_0}L(\alpha(s),h_{x_0,u_0}(\alpha(s),s),\dot{\alpha}(s))ds\\
		&\ \ \ \ \ \ \ \ \  +\int_{t_0}^tL(\gamma(s),h_{x_0,u_0}(\gamma(s),s),\dot{\gamma}(s))ds\\
		&=u_0+\int_0^tL(\beta(s),h_{x_0,u_0}(\beta(s),s),\dot{\beta}(s))ds,
	\end{align*}
	which implies that $\beta$ is a minimizer of $h_{x_0,u_0}(x,t)$. From
	Proposition  \ref{IVP}, $\gamma$ and $\beta$ are both of class $C^1$. Therefore, we have $\dot{\gamma}(t_0)=\dot{\beta}(t_0)$. By Proposition  \ref{IVP} and the uniqueness of solutions of initial value problem of ordinary differential equations, we have $\alpha(s)=\gamma(s)$ for all $s\in[0,t_0]$, which completes the proof.
\end{proof}

\medskip

\begin{proof}[Proof of Proposition \ref{pr2.3}]
	We divide the proof in two steps.
	
	\medskip
	
	\noindent {\bf Step 1}: Given any $t_1$, $t_2\in\mathbb{R}$ with $t_1< t_2$ and $t_0\in(t_1,t_2)$, since  $(x(t),u(t))$ is  globally minimizing, then we have
	\begin{align*}
		u(t_2)&=h_{x(t_1),u(t_1)}(x(t_2),t_2-t_1),\\
		u(t_2)&=h_{x(t_0),u(t_0)}(x(t_2),t_2-t_0),\\
		u(t_0)&=h_{x(t_1),u(t_1)}(x(t_0),t_0-t_1).
	\end{align*}
	It follows that
	\begin{align*}
		h_{x(t_1),u(t_1)}(x(t_2),t_2-t_1)&=h_{x(t_0),u(t_0)}(x(t_2),t_2-t_0)
		=h_{x(t_0),h_{x(t_1),u(t_1)}(x(t_0),t_0-t_1)}(x(t_2),t_2-t_0).
	\end{align*}
	In view of Proposition \ref{pr-af}(4), there is a minimizer of $h_{x(t_1),u(t_1)}(x(t_2),t_2-t_1)$, denoted by $\gamma$, such that $\gamma(t_0)=x(t_0)$.
	
	\medskip
	
	\noindent{\bf Step 2}: From the above arguments, there exists a minimizer $\alpha$ of $h_{x(t_1),u(t_1)}(x(t_2+1),t_2-t_1+1)$ such that $x(t_2)=\alpha(t_2)$. By Lemma \ref{lem2.1}, $\alpha\big|_{[t_1,t_2]}$ is the unique minimizer of $h_{x(t_1),u(t_1)}(x(t_2),t_2-t_1)$. By the arguments used in Step 1 again, $x(s)=\alpha(s)$ for all $s\in[t_1,t_2]$. Thus, by Proposition  \ref{IVP} and the arbitrariness of $t_1$ and $t_2$ with $t_1<t_2$, $x(t)$ is of class $C^1$ for $t\in\mathbb{R}$, and $(x(t),u(t),p(t))$ is a solution of (\ref{c}), where $p(t):=\frac{\partial L}{\partial \dot{x}}(x(t),u(t),\dot{x}(t))$.
	Since
	\[
	\dot{u}(t)=L(x(t),u(t),\dot{x}(t)),
	\]
	it is easy to see that $x(t)|_{[t_1,t_2]}$ is a minimizer of $h_{x(t_1),u(t_1)}(x(t_2),t_2-t_1)$.
	
\end{proof}

\subsection{Proof of Proposition \ref{iv}}

\begin{lem}\label{ulipp}
	If $\varphi\prec L$, then $\varphi$ is Lipschitz continuous on $M$.
\end{lem}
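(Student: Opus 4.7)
The plan is to exploit compactness of $M$ together with the domination inequality to extract a uniform Lipschitz constant. First I would note that $\varphi$ is continuous on the compact manifold $M$, hence bounded: there exists $R>0$ such that $|\varphi(x)| \leq R$ for all $x \in M$. Next, by continuity of $L$ on the compact set
\[
K := \{(x,u,v) \in TM \times \mathbb{R} : |u| \leq R,\ \|v\|_x \leq 1\},
\]
there is a constant $C>0$ with $L(x,u,v) \leq C$ on $K$. Here compactness of the unit tangent bundle of $M$ is what makes this work; we do not need (L2) since we have cut off to unit vectors.

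Then, given any $x, y \in M$, I would choose a minimizing geodesic $\gamma : [0, d(x,y)] \to M$ from $y$ to $x$ parameterized by arc length (it exists by Hopf--Rinow applied to the compact manifold $M$). Since $\gamma$ is piecewise $C^1$ and $\|\dot\gamma\|_\gamma \equiv 1$, and since $\varphi(\gamma(s)) \in [-R,R]$ for all $s$, the domination inequality from Definition~\ref{bwkam}(1) gives
\[
\varphi(x) - \varphi(y) \leq \int_0^{d(x,y)} L(\gamma(s), \varphi(\gamma(s)), \dot\gamma(s))\, ds \leq C \cdot d(x,y).
\]
Swapping the roles of $x$ and $y$ (using the minimizing geodesic from $x$ to $y$) yields the reverse inequality, so $|\varphi(x)-\varphi(y)| \leq C\, d(x,y)$ for all $x,y \in M$.

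There is essentially no obstacle here: the only small point to attend to is that $\varphi \prec L$ is stated for continuous piecewise $C^1$ curves, which covers unit-speed minimizing geodesics on $M$. Assumptions (L1)--(L3) on $L$ are not directly used; only the continuity of $L$ and the compactness of $M$ (hence of the closed unit disk bundle of $TM$ and the interval $[-R,R]$) enter the argument.
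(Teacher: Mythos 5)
Your proof is correct and follows essentially the same route as the paper: bound $\varphi$ by compactness, bound $L$ on the set where $|u|$ is bounded by $\|\varphi\|_\infty$ and $\|v\|_x=1$, apply the domination inequality along a unit-speed minimizing geodesic, and symmetrize. No issues.
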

\begin{proof}
	For each $x$, $y\in M$, let $\gamma:[0,d(x,y)]\to M$ be a geodesic of length $d(x,y)$, parameterized by arclength and connecting $x$ to $y$. Since $M$ is compact and $\varphi$ is continuous, then
	\[
	A_1:=\max_{x\in M}|\varphi(x)|\quad A_2:=\sup\{L(x,u,\dot{x})\ |\ x\in M,\ |u|\leqslant A_1,\ \|\dot{x}\|_x=1\}
	\]
	are well-defined.
	Since $\|\dot{\gamma}(s)\|_{\gamma(s)}=1$ for each $s\in[0,d(x,y)]$, we have $L(\gamma(s),\varphi(\gamma(s)),\dot{\gamma}(s))\leqslant A_2$. Then by $\varphi\prec L$,
	\begin{align*}
		\varphi(\gamma(d(x,y)))-\varphi(\gamma(0))\leqslant \int_0^{d(x,y)}L(\gamma(s),\varphi(\gamma(s)),\dot{\gamma}(s))ds
		\leqslant \int_0^{d(x,y)}A_2ds=A_2d(x,y).
	\end{align*}
	We finish the proof by exchanging the roles of $x$ and $y$.
\end{proof}

\medskip

\begin{lem}\label{labell}
	Let $\varphi\prec L$ and let $\gamma:[a,b]\rightarrow M$ be a $(\varphi,L,0)$-calibrated curve. If $\varphi$ is differentiable at $\gamma(t)$ for some $t\in (a,b)$, then we have
	\[
	H\left(\gamma(t),\varphi(\gamma(t)),D\varphi(\gamma(t))\right)=0,\qquad D\varphi(\gamma(t))=\frac{\partial L}{\partial \dot{x}}(\gamma(t),\varphi(\gamma(t)),\dot{\gamma}(t)).
	\]
\end{lem}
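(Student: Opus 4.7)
The plan is to use Fenchel--Legendre duality between $H$ and $L$ in the velocity/momentum variables (with the $u$-slot frozen at the calibration value). I would derive one inequality from the global domination $\varphi\prec L$ along arbitrary test curves through $\gamma(t)$, and the matching equality from the calibration identity along $\gamma$ itself; the differentiability of $\varphi$ at $\gamma(t)$ lets me pass to the limit $s\to t^+$.

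Write $x_0=\gamma(t)$, $u_0=\varphi(x_0)$. Because $\gamma$ is a $(\varphi,L,0)$-calibrated curve (and hence $C^1$ by the definition recalled in Definition \ref{bwkam}), for every $s\in(t,b]$ we have
\[
\varphi(\gamma(s))-\varphi(\gamma(t))=\int_t^s L(\gamma(\tau),\varphi(\gamma(\tau)),\dot\gamma(\tau))\,d\tau.
\]
Dividing by $s-t$ and using the differentiability of $\varphi$ at $x_0$ on the left and the continuity of the integrand on the right, one obtains the identity
\[
\langle D\varphi(x_0),\dot\gamma(t)\rangle_{x_0}=L(x_0,u_0,\dot\gamma(t)).
\]
Now fix an arbitrary $v\in T_{x_0}M$ and choose a smooth curve $\eta:[t,t+\varepsilon]\to M$ with $\eta(t)=x_0$ and $\dot\eta(t)=v$. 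The domination $\varphi\prec L$ gives
\[
\varphi(\eta(s))-\varphi(x_0)\leqslant\int_t^s L(\eta(\tau),\varphi(\eta(\tau)),\dot\eta(\tau))\,d\tau,
\]
and the same differentiation at $s=t^+$ yields
\[
\langle D\varphi(x_0),v\rangle_{x_0}\leqslant L(x_0,u_0,v),\qquad \forall v\in T_{x_0}M.
\]

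Finally, I would invoke the Fenchel--Legendre definition
\[
H(x_0,u_0,D\varphi(x_0))=\sup_{v\in T_{x_0}M}\bigl\{\langle D\varphi(x_0),v\rangle_{x_0}-L(x_0,u_0,v)\bigr\}.
\]
The inequality above says the supremum is $\leqslant 0$, while the calibration identity says the value $0$ is attained at $v=\dot\gamma(t)$. Hence $H(x_0,u_0,D\varphi(x_0))=0$ and the supremum is realized at $\dot\gamma(t)$. By the strict convexity (L1), the unique maximizer is $v=\partial H/\partial p(x_0,u_0,D\varphi(x_0))$ and the dual relation gives $D\varphi(x_0)=\frac{\partial L}{\partial\dot x}(x_0,u_0,\dot\gamma(t))$, which is the second claim.

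The only mildly delicate point is the one-sided differentiation at $s=t^+$: for the calibration identity this is immediate since the integrand is continuous and $\varphi$ is differentiable at $x_0$, and for the test-curve inequality the same reasoning applies because $\eta$ is smooth and $\varphi$ is continuous (in fact Lipschitz by Lemma \ref{ulipp}). No further regularity of $\varphi$ away from $x_0$ is needed, since we only take one-sided derivatives at $t$.
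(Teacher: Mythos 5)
Your argument is correct and is essentially the paper's own proof: both derive the subsolution inequality $\langle D\varphi(x_0),v\rangle_{x_0}\leqslant L(x_0,u_0,v)$ from $\varphi\prec L$ along arbitrary $C^1$ test curves, derive the matching equality at $v=\dot\gamma(t)$ by one-sided differentiation of the calibration identity, and combine them via the Fenchel--Legendre definition of $H$ to get $H=0$ with the supremum attained at $\dot\gamma(t)$, after which the Legendre relation yields $D\varphi(x_0)=\frac{\partial L}{\partial\dot x}(x_0,u_0,\dot\gamma(t))$. The only cosmetic difference is that you present the calibration step before the domination step while the paper does the reverse; you also invoke strict convexity a bit more explicitly for the uniqueness of the maximizer, which the paper leaves implicit under ``in view of Legendre transform.''
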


\begin{proof}
	By Lemma \ref{ulipp}, $\varphi$ is Lipschitz continuous on $M$. We first show that at each point $x\in M$ where $D\varphi(x)$ exists, we have
	\begin{align}\label{4-100}
		H(x,\varphi(x),D\varphi(x))\leqslant 0.
	\end{align}
	For any given $\varphi\in T_xM$, let $\alpha: [0,1]\rightarrow M$ be a $C^1$ curve such that $\alpha(0)=x$, $\dot{\alpha}(0)=v$. By $\varphi\prec L$, for each $t\in [0,1]$, we have
	\[
	\varphi(\alpha(t))-\varphi(\alpha(0))\leqslant \int_0^tL(\alpha(s),\varphi(\alpha(s)),\dot{\alpha}(s)))ds.
	\]
	Dividing by $t>0$ and let $t\rightarrow 0^+$, we have
	$\langle D\varphi(x),v\rangle\leqslant L(x,\varphi(x),v)$,
	which implies $	H(x,\varphi(x),D\varphi(x))=\sup_{v\in T_xM}(\langle D\varphi(x),v\rangle_x-L(x,\varphi(x),v))\leqslant 0$. Thus, (\ref{4-100}) holds.

	If $\varphi$ is differentiable at $\gamma(t)$ for some $t\in (a,b)$, then for each $t'\in [a,b]$ with $t\leqslant t'$, we have
	$\varphi(\gamma(t'))-\varphi(\gamma(t))= \int_t^{t'}L(\gamma(s),\varphi(\gamma(s)),\dot{\gamma}(s))ds$,
	since $\gamma:[a,b]\rightarrow M$ is a $(\varphi,L,0)$-calibrated curve.
	Dividing by $t'-t$ and let $t'\rightarrow t^+$, we have
	$\langle D\varphi(\gamma(t)),\dot{\gamma}(t)\rangle_{\gamma(t)}= L(\gamma(t),\varphi(\gamma(t)),\dot{\gamma}(t))$.
	Thus, we have
	\[
	H(\gamma(t),\varphi(\gamma(t)),D\varphi(\gamma(t)))\geqslant \langle D\varphi(\gamma(t)),\dot{\gamma}(t)\rangle_{\gamma(t)}-L(\gamma(t),\varphi(\gamma(t)),\dot{\gamma}(t))=0,
	\]
	which together with (\ref{4-100}) implies  $H(\gamma(t),\varphi(\gamma(t)),D\varphi(\gamma(t)))=0$ and
	\[
	\langle D\varphi(\gamma(t)),\dot{\gamma}(t)\rangle_{\gamma(t)}=H(\gamma(t),\varphi(\gamma(t)),D\varphi(\gamma(t)))+L(\gamma(t),\varphi(\gamma(t)),\dot{\gamma}(t)).
	\]
	In view of Legendre transform, we get
	\[
	D\varphi(\gamma(t))=\frac{\partial L}{\partial \dot{x}}(\gamma(t),\varphi(\gamma(t)),\dot{\gamma}(t)).
	\]
	This completes the proof.
\end{proof}

\medskip

\begin{lem}\label{diffe}
	Given any $a>0$, let $\varphi\prec L$ and let $\gamma:[-a,a]\rightarrow M$ be a $(\varphi,L,0)$-calibrated curve. Then $\varphi$ is differentiable at $\gamma(0)$.
\end{lem}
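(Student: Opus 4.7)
I would set $x_0 := \gamma(0)$, $u_0 := \varphi(x_0)$, and $p_0 := \frac{\partial L}{\partial \dot x}(x_0, u_0, \dot\gamma(0))$. The plan is to exhibit $p_0$ simultaneously as a Fr\'echet super- and sub-differential of $\varphi$ at $x_0$, which for a Lipschitz function (Lemma \ref{ulipp}) forces Fr\'echet differentiability at $x_0$ with $D\varphi(x_0) = p_0$ --- a value consistent with Lemma \ref{labell}.

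The superdifferential side uses the backward calibration. Fix small $t \in (0, a]$, set $y := \gamma(-t)$, $v := \varphi(y)$, and consider the contact Hamiltonian flow $\Phi_t$ issuing from fibers above $(y, v)$. For $t > 0$ small, no conjugate points occur along the trajectory starting at $(y, v, q_0)$ where $q_0$ is the Legendre dual of $\dot\gamma(-t)$ at level $v$, so the endpoint map $q \mapsto \pi_M \circ \Phi_t(y, v, q)$ is a local diffeomorphism near $q_0$. This lets me define a smooth function $\Psi^+_t$ on a neighborhood of $x_0$ by letting $\Psi^+_t(x)$ be the $u$-component of $\Phi_t(y, v, q(x))$, where $q(x)$ is the unique initial covector producing a trajectory from $y$ to $x$ in time $t$. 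Standard Hamilton--Jacobi identities (the endpoint momentum equals the gradient of the action) give $\Psi^+_t(x_0) = u_0$ and $D\Psi^+_t(x_0) = p_0$.

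To show $\Psi^+_t \geqslant \varphi$ to first order at $x_0$, let $\alpha_x : [0, t] \to M$ be the projected trajectory from $y$ to $x$ and $u(s)$ its $u$-component, so $\Psi^+_t(x) - v = \int_0^t L(\alpha_x(s), u(s), \dot\alpha_x(s))\, ds$. Domination $\varphi \prec L$ gives $\varphi(x) - v \leqslant \int_0^t L(\alpha_x(s), \varphi(\alpha_x(s)), \dot\alpha_x(s))\, ds$. Subtracting, using (L3) to bound the integrand difference by $\lambda |\varphi(\alpha_x(s)) - u(s)|$, and applying a Gronwall estimate on $\eta(s) := \varphi(\alpha_x(s)) - u(s)$ anchored at the calibrated reference (where both sides agree), yields $\varphi(x) \leqslant \Psi^+_t(x) + o(d(x, x_0))$ as $x \to x_0$, so $p_0 \in D^+\varphi(x_0)$. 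The subdifferential side is dual: use the forward calibration on $[0, a]$ to define $\Psi^-_t$ from the backward endpoint map to $(\gamma(t), \varphi(\gamma(t)))$, obtaining a smooth function tangent to $\varphi$ from below at $x_0$ with $D\Psi^-_t(x_0) = p_0$, hence $p_0 \in D^-\varphi(x_0)$ and differentiability follows.

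The main obstacle is the Gronwall step closing the tangency. Since (H3) bounds $|\partial L/\partial u|$ without a sign, $\varphi \prec L$ only yields one-sided control of $\eta(s)$, and a naive Gronwall loop does not pin down a sign on $\eta(0)$. The resolution is to exploit that $\gamma$ is calibrated on both sides of $0$: the constructions $\Psi^+_t$ and $\Psi^-_t$ coexist and sandwich $\varphi$ to first order at $x_0$ with matching linear data, so one avoids needing a global comparison and is only required to extract the tangent-plane estimate, which propagates the calibration equality at $x_0$ to a first-order smallness of $\eta(0)$ in $d(x, x_0)$.
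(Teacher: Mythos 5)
Your approach is genuinely different from the paper's. The paper works in local coordinates and never introduces the contact flow: for the superdifferential side it compares the calibration identity $\varphi(x)-\varphi(\gamma(-\varepsilon))=\int_{-\varepsilon}^{0}L(\gamma,\varphi(\gamma),\dot\gamma)\,ds$ with the domination inequality along the explicit wedge curve $\gamma_\lambda(s)=\gamma(s)+\frac{s+\varepsilon}{\varepsilon}\lambda y$, subtracts, and estimates the integrand difference directly by Taylor expansion of $L$ plus Lipschitzness of $\varphi$ (Lemma \ref{ulipp}). Crucially, $\varphi$ itself occupies the $u$-slot of $L$ on \emph{both} curves, so there is no flow-generated $u$-component to compare against and hence no Gronwall loop at all; one simply divides by $\lambda$, sends $\lambda\to 0^+$, then $\varepsilon\to 0^+$. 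The subdifferential side is symmetric on $[0,\varepsilon]$. This is much lighter than your construction of smooth touching functions $\Psi^\pm_t$ from the contact flow.

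That said, your route does have a real gap exactly where you flag one, and your stated resolution does not close it. From domination you obtain $\eta(\tau)\leqslant\lambda\int_0^\tau|\eta(s)|\,ds$ with $\eta(0)=0$; since $|\eta|=\eta^++\eta^-$ the uncontrolled negative part $\eta^-$ defeats the bootstrap, and ``exploiting two-sided calibration'' does not touch this --- the two-sidedness is already spent producing $\Psi^+_t$ and $\Psi^-_t$ separately, and in any case your sentence about ``first-order smallness of $\eta(0)$'' is confused, since $\eta(0)=0$ exactly by construction. Two ways to repair it: (i) observe that $\eta^-(s)=O(d(x,x_0))$ uniformly in $s\in[0,t]$ by smooth dependence of the contact flow on initial data (the perturbed trajectory and its $u$-component are $O(d(x,x_0))$-close to the lift of $\gamma|_{[-t,0]}$), which plugged into Gronwall gives $\eta^+(t)=O(t\,d(x,x_0))$ and suffices after letting $t\to 0^+$ at the end; or (ii), more cleanly, bypass Gronwall entirely by noting that $\varphi\prec L$ makes $\varphi$ a Lipschitz viscosity subsolution, so $\varphi\leqslant T^-_\tau\varphi\leqslant h_{y,\varphi(y)}(\cdot,\tau)$ for every $\tau>0$, and for $t$ small (no conjugate points, unique minimizer) $h_{y,\varphi(y)}(\cdot,t)$ coincides with your $\Psi^+_t$ near $x_0$; the one-sided touching $\varphi\leqslant\Psi^+_t$ is then immediate, with the dual argument via $T^+$ for $\Psi^-_t$. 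You also implicitly use that the lift of $\gamma$ is a flow trajectory with $u$-component $\varphi\circ\gamma$ (so that $\Psi^+_t(x_0)=\varphi(x_0)$ and the endpoint momentum at $x_0$ is $p_0$); this holds for any $\varphi\prec L$ and calibrated $\gamma$ by the argument of equation (\ref{neggmin}) in the paper, but it should be stated, not assumed. With these repairs your argument is sound, but as submitted it leaves a genuine gap.
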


\begin{proof}
	It suffices to prove the lemma for the case when $M=U$ is an open subset of $\mathbb{R}^n$. Set $x=\gamma(0)$. In order to prove the differentiability of $u$ at $x$, we only need to show for each $y\in U$, there holds
	\begin{equation}\label{supinff}
		\limsup_{\lambda\rightarrow 0^+}\frac{\varphi(x+\lambda y)-\varphi(x)}{\lambda}\leqslant \frac{\partial L}{\partial \dot{x}}(x,\varphi(x),\dot{\gamma}(0))\cdot y
		\leqslant \liminf_{\lambda\rightarrow 0^+}\frac{\varphi(x+\lambda y)-\varphi(x)}{\lambda}.
	\end{equation}
	For $\lambda>0$ and $0<\varepsilon\leqslant a$, define $\gamma_\lambda:[-\varepsilon,0]\rightarrow U$ by $\gamma_\lambda(s)=\gamma(s)+\frac{s+\varepsilon}{\varepsilon}\lambda y$. Then $\gamma_\lambda(0)=x+\lambda y$ and $\gamma_\lambda(-\varepsilon)=\gamma(-\varepsilon)$. Since $u\prec L$ and $\gamma:[-a,a]\rightarrow M$ is a $(\varphi,L,0)$-calibrated curve,  we have
	\[
	\varphi(x+\lambda y)-\varphi(\gamma(-\varepsilon))\leqslant \int_{-\varepsilon}^{0}L(\gamma_\lambda(s),\varphi(\gamma_\lambda(s)),\dot{\gamma}_\lambda(s))ds,
	\]
	and
	\[
	\varphi(x)-\varphi(\gamma(-\varepsilon))= \int_{-\varepsilon}^{0}L(\gamma(s),\varphi(\gamma(s)),\dot{\gamma}(s))ds.
	\]
	It follows that
	\[
	\frac{\varphi(x+\lambda y)-\varphi(x)}{\lambda}\leqslant\frac{1}{\lambda}\int_{-\varepsilon}^{0}\Big(L(\gamma_\lambda(s),\varphi(\gamma_\lambda(s)),\dot{\gamma}_\lambda(s))-L(\gamma(s),\varphi(\gamma(s)),\dot{\gamma}(s))\Big)ds.
	\]
	By Lemma \ref{ulipp}, there exists $K>0$ such that

	\[
	|\varphi(\gamma_\lambda(s))-\varphi(\gamma(s))|\leqslant  K\|\gamma_\lambda(s)-\gamma(s)\|=K\cdot\frac{s+\varepsilon}{\varepsilon}\cdot\lambda \|y\|,
	\]
	which implies
	\begin{align*}
		\limsup_{\lambda\rightarrow 0^+}\frac{\varphi(x+\lambda y)-\varphi(x)}{\lambda}&\leqslant\int_{-\varepsilon}^{0}\Big(\frac{s+\varepsilon}{\varepsilon}\cdot\frac{\partial L}{\partial x}(\gamma(s),\varphi(\gamma(s)),\dot{\gamma}(s))\cdot y\\
		&+K\frac{s+\varepsilon}{\varepsilon}|\frac{\partial L}{\partial \varphi}(\gamma(s),\varphi(\gamma(s)),\dot{\gamma}(s))|\|y\|\\
		&+\frac{1}{\varepsilon}\frac{\partial L}{\partial \dot{x}}(\gamma(s),\varphi(\gamma(s)),\dot{\gamma}(s))\cdot y \Big)ds.
	\end{align*}
	If we let $\varepsilon\rightarrow 0^+$, we get the first inequality in (\ref{supinff}).

	Define $\gamma_\lambda:[0,\varepsilon]\rightarrow M$ by $\gamma_\lambda(s)=\gamma(s)+\frac{\varepsilon-s}{\varepsilon}\lambda y$. We have
	\begin{align*}
		\varphi(\gamma(\varepsilon))-\varphi(x+\lambda y)&\leqslant \int^{\varepsilon}_{0}L(\gamma_\lambda(s),\varphi(\gamma_\lambda(s)),\dot{\gamma}_\lambda(s))ds,\\
		\varphi(\gamma(\varepsilon))-\varphi(x)&= \int^{\varepsilon}_{0}L(\gamma(s),\varphi(\gamma(s)),\dot{\gamma}(s))ds.
	\end{align*}
	It follows that
	\[
	\frac{\varphi(x+\lambda y)-\varphi(x)}{\lambda}\geqslant\frac{1}{\lambda}\int^{\varepsilon}_{0}\Big(L(\gamma(s),\varphi(\gamma(s)),\dot{\gamma}(s))-L(\gamma_\lambda(s),\varphi(\gamma_\lambda(s)),\dot{\gamma}_\lambda(s))\Big)ds,
	\]
	which implies
	\begin{align*}
		\liminf_{\lambda\rightarrow 0^+}\frac{\varphi(x+\lambda y)-\varphi(x)}{\lambda}&\geqslant\int^{\varepsilon}_{0}\Big(\frac{s-\varepsilon}{\varepsilon}\frac{\partial L}{\partial x}(\gamma(s),\varphi(\gamma(s)),\dot{\gamma}(s))\cdot y\\
		&+K\frac{s-\varepsilon}{\varepsilon}|\frac{\partial L}{\partial \varphi}(\gamma(s),\varphi(\gamma(s)),\dot{\gamma}(s))|\|y\|\\
		&+\frac{1}{\varepsilon}\frac{\partial L}{\partial \dot{x}}(\gamma(s),\varphi(\gamma(s)),\dot{\gamma}(s))\cdot y\Big)ds.
	\end{align*}
	Letting $\varepsilon\rightarrow 0^+$, we obtain the second inequality in (\ref{supinff}).
	This completes the proof.
\end{proof}

\medskip

\begin{proof}[ Proof of Proposition \ref{iv}]
	Let ${\bf{u}}(t):=u(\gamma(t))$ for $t\leqslant 0$. We assert that
	for each $s$, $t<0$ with $s<t$, there holds
	\begin{align}\label{neggmin}
		{\bf{u}}(t)=h_{\gamma(s),{\bf{u}}(s)}(\gamma(t),t-s).
	\end{align}

	If the assertion is true, then by Proposition \ref{pr2.3},  $\big(\gamma(t),{\bf{u}}(t),p(t)\big)$ satisfies equations (\ref{c}) on $(-\infty,0)$, where $p(t)=\frac{\partial L}{\partial \dot{x}}(\gamma(t),{\bf{u}}(t),\dot{\gamma}(t))$. Now we prove the assertion. Since $u$ is a backward weak KAM solution, then we have $
	T^-_{\sigma}u(x)=u(x), \forall x\in M, \forall \sigma\geq 0.$
	Recall  that $T^-_{\sigma}u(x)=\inf_{y\in M}h_{y,u(y)}(x,{\sigma})$ for all $\sigma>0$. Given any $s<t\leqslant 0$, we get
	\begin{align}\label{4-101}
		{\bf{u}}(\tau)\leqslant h_{\gamma(s),{\bf{u}}(s)}(\gamma(\tau),\tau-s),\quad \forall \tau\in (s,t].
	\end{align}
	Since 	$\gamma:(-\infty,0]\rightarrow M$ is a  $(u,L,0)$-calibrated curve, then we have
	\[
	{\bf{u}}(t)-{\bf{u}}(s)=\int_s^tL(\gamma(\tau),{\bf{u}}(\tau),\dot{\gamma}(\tau))d\tau,
	\]
	which together with (\ref{4-101}) implies
	\[
	{\bf{u}}(t)\geqslant {\bf{u}}(s)+\int_s^tL(\gamma(\tau),h_{\gamma(s),{\bf{u}}(s)}(\gamma(\tau),\tau-s),\dot{\gamma}(\tau))d\tau\geqslant h_{\gamma(s),{\bf{u}}(s)}(\gamma(t),t-s).
	\]
	By (\ref{4-101}) again, we have
	${\bf{u}}(t)=h_{\gamma(s),{\bf{u}}(s)}(\gamma(t),t-s).$
	Hence, (\ref{neggmin}) holds.

	By Lemma \ref{labell} and Lemma \ref{diffe}, $u$ is differentiable at $\gamma(t)$ for any $t<0$ and
	\[
	Du(\gamma(t))=\frac{\partial L}{\partial \dot{x}}(\gamma(t),u(\gamma(t)),\dot{\gamma}(t)).
	\]
	Hence,
	$
	(\gamma(t+s),u(\gamma(t+s)),Du(\gamma(t+s)))=\Phi_{s}(\gamma(t),u(\gamma(t)),Du(\gamma(t)))$, $\forall t,\ s<0.$
	In view of Lemma \ref{labell}, we have
	\[
	H\big(\gamma(t),u(\gamma(t)),\frac{\partial L}{\partial \dot{x}}(\gamma(t),u(\gamma(t)),\dot{\gamma}(t))\big)=0,\quad \forall t<0,
	\]
	which completes the proof.
\end{proof}

\subsection{Proof of Lemma \ref{jc}}

\begin{lem}\label{jc}
	Let $\va\in\mathrm{SCL^-}(M)$ and $\psi\in\mathrm{SCL^+}(M)$. Let $x_0$ be a local minimum point of $\va-\psi$. Then both $\va$ and $\psi$ are differentiable at $x_0$ with $D\va(x_0)=D\psi(x_0)$.
\end{lem}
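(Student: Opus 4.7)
The plan is to exploit the classical fact from semiconcave/semiconvex calculus (see, e.g., \cite{CS}) that a semiconcave function on $M$ is differentiable at a point $x$ if and only if its Fr\'echet subdifferential $D^-$ at $x$ is non-empty (and dually for semiconvex functions with $D^+$). With this in hand, the proof reduces to a standard subdifferential transfer across a local minimum.

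First I would use that $\psi\in\mathrm{SCL^+}(M)$ has $D^-\psi(x_0)\neq\emptyset$: pick any $p\in D^-\psi(x_0)$, so
\[
\psi(x)-\psi(x_0)\geqslant \langle p, x-x_0\rangle_{x_0}+o(d(x,x_0)).
\]
The local minimum assumption on $\va-\psi$ at $x_0$ gives $\va(x)-\va(x_0)\geqslant \psi(x)-\psi(x_0)$ for $x$ near $x_0$, hence
\[
\va(x)-\va(x_0)\geqslant \langle p, x-x_0\rangle_{x_0}+o(d(x,x_0)),
\]
i.e.\ $p\in D^-\va(x_0)$. Since $\va$ is semiconcave, the non-emptiness of $D^-\va(x_0)$ forces $\va$ to be differentiable at $x_0$ with $D\va(x_0)=p$.

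Next I would run the symmetric argument: $\va\in\mathrm{SCL^-}(M)$ has $D^+\va(x_0)\neq\emptyset$, so pick $q\in D^+\va(x_0)$. Using the local min of $\va-\psi$ in the reverse direction, namely $\psi(x)-\psi(x_0)\leqslant \va(x)-\va(x_0)$ near $x_0$, we transfer the supergradient: $q\in D^+\psi(x_0)$. Semiconvexity of $\psi$ then yields differentiability of $\psi$ at $x_0$ with $D\psi(x_0)=q$. Finally, differentiability of both $\va$ and $\psi$ at $x_0$ collapses all four sub/superdifferentials to singletons, and since $p\in D^-\psi(x_0)=\{D\psi(x_0)\}$ and $p=D\va(x_0)$, we obtain $D\va(x_0)=D\psi(x_0)$.

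There is no real obstacle here beyond invoking the correct characterisations of differentiability for semiconcave and semiconvex functions; the content of the lemma is essentially a clean application of the fact that a local minimum of $\va-\psi$ at $x_0$ implies $D^+\va(x_0)\subset D^+\psi(x_0)$ and $D^-\psi(x_0)\subset D^-\va(x_0)$, which when combined with the one-sided smoothness of the two classes forces full differentiability.
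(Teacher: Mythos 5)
Your proof is correct and reaches the same conclusion by a genuinely different, though closely related, route. The paper's argument observes that $\va-\psi$ is itself semiconcave (sum of the semiconcave functions $\va$ and $-\psi$), so the local minimum at $x_0$ forces differentiability of $\va-\psi$ there with $D^+(\va-\psi)(x_0)=\{0\}$; it then invokes the superadditivity inclusion $D^+\va(x_0)+D^+(-\psi)(x_0)\subset D^+(\va-\psi)(x_0)$ to conclude in one stroke that both superdifferentials are singletons. Your proof instead transfers sub- and super-differentials directly across the inequality $\va(x)-\va(x_0)\geqslant\psi(x)-\psi(x_0)$ coming from the local minimum: a subgradient of $\psi$ becomes a subgradient of $\va$, a supergradient of $\va$ becomes a supergradient of $\psi$, and the one-sided regularity of each function does the rest. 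Both arguments ultimately rest on the same fact from semiconcave calculus, namely that a semiconcave (resp.\ semiconvex) function is differentiable at a point as soon as its subdifferential (resp.\ superdifferential) there is non-empty. Your version avoids the sum rule for superdifferentials at the cost of running the transfer twice; the paper's version is more compact but leans on superadditivity of $D^+$ for sums of semiconcave functions. Either is a valid proof of the lemma.
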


\begin{proof}
	Since $\va$, $-\psi$, $\va-\psi\in \mathrm{SCL^-}(M)$, then $D^+\va(x_0)$, $D^+(-\psi)(x_0)$, $D^+(\va-\psi)(x_0)$ are non-empty. Since $x_0$ is a local minimum point of $\va-\psi$, then
	\[
	D^+\va(x_0)+D^+(-\psi)(x_0)\subset D^+(\va-\psi)(x_0)=D(\va-\psi)(x_0)=\{0\},
	\]
	which implies that both $D^+\va(x_0)$ and $D^+(-\psi)(x_0)$ are singletons.
\end{proof}

\bigskip

\bigskip

\textbf{This paper has no associated data.}

\bigskip

\bigskip
\noindent {\bf Acknowledgements:}
Kaizhi Wang is supported by NSFC Grant No. 12171315, 11931016 and by Natural Science Foundation of Shanghai No. 22ZR1433100. Jun Yan is supported by NSFC Grant No. 12171096, 11790272. The authors thank Liang Jin for providing the proof of Lemma \ref{jc}.

\medskip

\end{document}